\newtheorem{theorem}{Theorem}[section]
\newtheorem*{theorem*}{Theorem}
\newtheorem{corollary}[theorem]{Corollary}
\newtheorem*{corollary*}{Corollary}
\newtheorem{lemma}[theorem]{Lemma}
\newtheorem{proposition}[theorem]{Proposition}
\newtheorem*{theorem-Structure}{Theorem \ref{GCT}}
\theoremstyle{definition}
\newtheorem{definition}[theorem]{Definition}
\newtheorem{remark}[theorem]{Remark}
\newcommand{\N}{\mathbb{N}}
\newcommand{\co}{\mskip0.5 mu\colon\thinspace} 
\newcommand{\nil}{\varnothing}
\newcommand{\wihat}[1]{\widehat{#1}}
\newcommand{\defn}[1]{\textbf{#1}}
\newcommand{\g}{\operatorname{g}}
\newcommand{\oc}[1]{\vect{\textbf{c}}}
\newcommand{\netchi}{\operatorname{net}\chi} 
\newcommand{\boundary}{\partial}
\newcommand{\mc}[1]{\mathcal{#1}}
\newcommand{\spacing}{
\parskip 6.6pt
\parindent 0pt
}
\begin{document}

\title{Strong Haken via Thin Position}
\author{Scott A. Taylor}
\email{sataylor@colby.edu}

\dedicatory{For Mario Eudave-Mu\~noz}

\begin{abstract}
We use thin position of Heegaard splittings to give a new proof of Haken's Lemma that a Heegaard surface of a reducible manifold is reducible and of Scharlemann's ``Strong Haken Theorem'': a Heegaard surface for a 3-manifold may be isotoped to intersect a given collection of essential spheres and discs in a single loop each. We also give a reformulation of Casson and Gordon's theorem on weakly reducible Heegaard splittings, showing that they exhibit additional structure with respect to certain incompressible surfaces. This article could also serve as an introduction to the theory of generalized Heegaard surfaces and it includes a careful study of their behaviour under amalgamation.
\end{abstract}

\maketitle

\section{Introduction}
A surface $H$ in a closed, connected, orientable 3-manifold $M$ is a \defn{Heegaard surface} if $M = H_1 \cup_H H_2$ where $H_1$ and $H_2$ are handlebodies and (after gluing) $H = \boundary H_1 = \boundary H_2$. This definition can be generalized to 3-manifolds with boundary, by allowing $H$ to divide $M$ into two compressionbodies, rather than handlebodies. The \defn{Heegaard genus} $\g(M)$ of $M$ is the minimal genus of a Heegaard surface in $M$. A 3-manifold $M$ is \defn{reducible} if there is an embedded sphere $S \subset M$ such that $S$ is not the boundary of a 3-ball in $M$; the sphere $S$ is a \defn{reducing sphere}. A Heegaard surface is \defn{reducible} if there exists a reducing sphere intersecting it in a single simple closed curve. 

One of the fundamental results of Heegaard splitting theory is:

\begin{theorem*}[{Haken's Lemma (1968) \cite{Haken}}]
Every Heegaard surface $H$ for a compact, connected, orientable reducible 3-manifold $M$ is reducible.
\end{theorem*}

From this it follows easily that:
\begin{corollary*}[Heegaard genus is additive]
If $M_1$ and $M_2$ are compact and orientable, then $\g(M_1 \# M_2) = \g(M_1) + \g(M_2)$.
\end{corollary*}

A sphere $S \subset M$ is \defn{essential} if it is a reducing sphere that is not $\boundary$-parallel in $M$. If $\boundary M$ does not contain a sphere, then every sphere in $M$ is a reducing sphere if and only if it is essential. If $M$ is not a 3-ball but $\boundary M$ contains a sphere, then any $\boundary$-parallel sphere is also a reducing sphere. To avoid trivialities, we will generally phrase our results for essential spheres, rather than reducing spheres. A properly embedded disc $D$ in $M$ is \defn{essential} if $\boundary D$ does not bound a disc in $\boundary M$. Casson and Gordon \cite{CG} adapted Haken's result so that it also applies to discs:

\begin{lemma}[Disc version of Haken's Lemma]\label{HL-disc}
If $H$ is a Heegaard surface for a compact orientable 3-manifold $M$ with boundary such that $M$ contains an essential disc $D$, then there exists an essential disc $D'$ with the same boundary as $D$, such that $D$ and $H$ intersect in a single simple closed curve.
\end{lemma}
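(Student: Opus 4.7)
The plan is to reduce the disc version to the sphere version of Haken's Lemma just proved, by capping off the boundary component of $M$ containing $\partial D$ with a carefully chosen handlebody. Let $F$ be the component of $\partial M$ containing $\partial D$, and write the Heegaard splitting as $M = V \cup_H W$ with $F \subset \partial_- V$. I would pick a handlebody $U$ with $\partial U = F$ in which $\partial D$ bounds a meridian disc $D^*$ (such $U$ always exists: take $F \times I$, attach a $2$-handle along $\partial D \times \{0\}$, and cap any resulting $2$-spheres by balls). Form $M^* := M \cup_F U$ and let $S := D \cup_{\partial D} D^* \subset M^*$, with $U$ chosen of large enough genus that $S$ is non-separating, hence essential, in $M^*$.

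Attaching a handlebody to a compression body along a component of its negative boundary produces again a compression body, so $V^* := V \cup_F U$ is a compression body and $M^* = V^* \cup_H W$ is a Heegaard splitting with the same surface $H$. This is a basic instance of the amalgamation structure the paper studies carefully. Haken's Lemma (the sphere version, just proved) now applies to $(M^*, H)$ with $S$ as input essential sphere, producing a reducing sphere $R \subset M^*$ with $|R \cap H| = 1$.

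The key remaining step is to arrange that $R$ has the form $D' \cup_{\partial D} D^*$ for some disc $D' \subset M$, which will then satisfy $\partial D' = \partial D$ and $|D' \cap H| = 1$. The idea is to re-run the thin-position proof of Haken's Lemma beginning specifically from $S = D \cup D^*$, while holding the ``handlebody half'' $D^*$ fixed throughout. Since $D^* \subset U$ is disjoint from $H$, the intersection $S \cap H$ lies entirely in $M$; the thin-position reductions shrink $|S \cap H|$ by isotopies and compressions of $S$, and in principle all of these can be localised to the $M$-side of $F$, so that $D^*$ survives the procedure unchanged. When the process terminates with $|S \cap H| = 1$, cutting the resulting sphere along $F$ delivers the required disc $D'$.

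The main obstacle is confirming that every compression of $S$ required by the thin-position argument can in fact be chosen to lie in $M$ rather than in $U$. Compressing discs for $S$ lying on the $W$-side are automatically disjoint from $U$ and so cause no trouble. Compressing discs on the $V^* = V \cup_F U$-side, however, may a priori enter $U$ and thereby disturb $D^*$; one must show that any such compressing disc can be isotoped into $V$ using the handlebody structure of $U$, or equivalently that an innermost-disc argument in $U$ pushes any excursion into $U$ back across $F$ without creating new intersections with $H$. Verifying this is precisely where the paper's careful analysis of generalized Heegaard surfaces under amalgamation does the work: one uses the fact that $V^*$ was obtained by amalgamating $V$ with $U$ across $F$ to see that every thin-position move has a representative supported in $V \cup W \subset M$.
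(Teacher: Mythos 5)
There is a genuine gap, and it sits exactly where you place your hopes at the end. The sphere version of Haken's Lemma proved in this paper (Theorem \ref{Haken Lem}) is a pure existence statement: $H$ is thinned, some essential sphere is found among the thin surfaces $\mc{H}^-$, and a careful amalgamation makes $H$ meet \emph{that} sphere once. Nothing in the argument starts from, or retains any memory of, a prescribed sphere $S$; in particular the reducing sphere $R$ it produces for $M^*$ need not meet $U$ at all, and even if it did, $R\cap U$ need not be a disc with boundary $\partial D$. Your proposed remedy --- ``re-run the thin-position proof beginning specifically from $S = D\cup D^*$, holding $D^*$ fixed'' --- does not correspond to any step of that proof, because the proof is not organized as a reduction of $|S\cap H|$ for a given $S$; getting control over a \emph{specified} sphere or disc is precisely the content of the Strong Haken Theorem, so as written your reduction assumes what the harder theorem is needed to supply. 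There is also a smaller but real problem earlier: your construction of $U$ ($F\times I$ plus one $2$-handle plus balls) is not a handlebody unless the surgered surface is a sphere, and the phrase ``$U$ of large enough genus'' is vacuous since $\genus(U)=\genus(F)$; moreover when $D$ is separating a careless filling can make $D\cup D^*$ inessential or even make $M^*$ irreducible (e.g. capping a genus-$2$ handlebody to get $S^3$), in which case the sphere lemma yields nothing. So the essentiality of $S$ in $M^*$ needs an argument you have not given.

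For comparison, the paper does not prove Lemma \ref{HL-disc} by capping off at all: the statement is quoted from Casson--Gordon, and within this paper it is recovered as a special case of the Strong Haken Theorem, whose machinery treats discs on the same footing as spheres from the start --- s-discs and compressing discs for $\partial M$ appear in the compressionbody structure, Lemma \ref{SISHL} has an explicit disc case, Corollary \ref{disjoint spheres}(4) arranges each disc of $\mc{S}$ to meet $\mc{H}^+$ in one essential circle, and the final amalgamation argument preserves that single intersection. If you want a correct reduction in the spirit of your plan, you would need either to invoke the Strong Haken Theorem (at which point the capping is unnecessary) or to redo the thin-position argument keeping track of the fixed disc throughout, which is essentially the work of Sections \ref{juggling}--\ref{strong haken}.
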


As with Haken's Lemma, this has the corollary:
\begin{corollary*}
If $M$ is the boundary connected sum of 3-manifolds $M_1$, and $M_2$, then $\g(M) = \g(M_1) + \g(M_2)$. Likewise, if $M$ is obtained by attaching a 1--handle to the the boundary of a compact, connected, orientable $M'$, then $\g(M) = \g(M') + 1$.
\end{corollary*}

Noting that Haken's result only guarantees existence, in 2020, Scharlemann considered the question of whether if $S$ is a fixed essential sphere or disc, a Heegaard surface $H$ can necessarily be isotoped to intersect the given sphere or disc $S$ in a single simple closed curve. He showed:

\begin{theorem*}[{Strong Haken Theorem \cite{Scharlemann}}]
Suppose that $\mc{S}$ is the union of pairwise disjoint properly embedded essential spheres and discs in a compact, connected, orientable 3-manifold $M$. If $H$ is a Heegaard surface for $M$, then $H$ can be isotoped so that $H$ intersects each component of $\mc{S}$ in a single simple closed curve.
\end{theorem*}

Scharlemann's proof is quite involved and proceeds by considering the intersections between a spine for one of the handlebodies in the Heegaard splitting given by $H$ and the surfaces in the collection $\mc{S}$. In \cite{ST-Wald} (see also \cite{SSS}), Scharlemann and Thompson use a similar perspective to prove Haken's Lemma and Waldhausen's classification of Heegaard splittings of $S^3$ \cite{Waldhausen-3sph}. (Another earlier proof of Haken's Lemma using normal surface theory was given by Johannson \cite{Johannson}.) Scharlemann's approach also allows him to prove a version of the theorem when $\boundary M$ contains spheres. That adds some complication, since it is then possible for an essential sphere to be altogether disjoint from a Heegaard surface.

In 2021, Hensel and Schultens \cite{HeSc} gave a much shorter proof of the Strong Haken Theorem for a closed 3-manifold $M$. Their proof uses ``sphere complexes,'' a construction arising from recent work in geometric group theory. In this paper, we give another, completely different, proof of the Strong Haken Theorem using thin position for 3-manifolds; an idea that goes back to Scharlemann-Thompson \cite{ST}. Although our proof occupies more pages, there is a certain simplicity to the basic idea. Along the way we develop some Heegaard splitting techniques that may be of independent interest. It is also worth noting that results such as the Strong Haken Theorem are by no means obvious. Recently, the author \cite{Taylor} showed that there is no such result if $H$ and $\mc{S}$ are required to be invariant under a smooth finite group action. Indeed, equivariant Heegaard genus can be sub-additive, additive, or super-additive under equivariant connect sums of 3-manifolds. 

Our approach is rooted in the Casson-Gordon Theorem for Heegaard surfaces \cite{CG}, and, in turn, gives a new perspective on it. A Heegaard surface is \defn{weakly reducible} if it admits disjoint compressing discs on opposites sides. Such a pair of discs is a \defn{weak reducing pair}. A Heegaard surface is \defn{strongly irreducible} if it is not weakly reducible. A Heegaard surface $H$ is \defn{stabilized} if there are compressing discs for $H$ on opposite sides of $H$ and with boundaries transversally intersecting in a single point. Such a pair of discs is a \defn{stabilizing pair}.
\begin{theorem*}[Casson-Gordon]
If $H$ is a weakly reducible Heegaard surface for a closed 3-manifold $M$, then either $H$ is stabilized or $M$ contains a closed essential surface.
\end{theorem*}

It follows from their techniques that the closed essential surface has genus at most that of $H$. In their context, ``essential'' means that the surface is incompressible and not a 2-sphere bounding a 3-ball. They actually show that if $H$ is weakly reducible, then either $H$ is reducible or $M$ contains a closed essential surface. Waldhausen \cite{Waldhausen-3sph} showed that every positive genus Heegaard surface of $S^3$ is stabilized; thus the existence of an inessential sphere intersecting $H$ in a single simple closed curve implies that $H$ is reducible. 

The situation when $\boundary M \neq \nil$ is somewhat more complicated and there are counter-examples to the theorem if we naively drop the requirement that $M$ be closed. The possibility of a Casson-Gordon theorem for 3-manifolds with boundary was first studied by Moriah \cite{Moriah}, Moriah-Sedgwick \cite{MS}, and Kobayashi \cite{Kobayashi2}. Hayashi-Shimokawa \cite{HS} improved on that work\footnote{Hayashi-Shimokawa use the term \emph{netted} instead of \emph{$\boundary$-stabilized}.} and proved:
\begin{theorem*}[Hayashi-Shimokawa]
If $H$ is a weakly reducible Heegaard surface for a compact, orientable 3-manifold (possibly with boundary) $M$, then either $H$ is stabilized, $H$ is $\boundary$-stabilized, or $M$ contains a closed essential surface.
\end{theorem*}

One way of defining $\boundary$-stabilization is as follows. Let $M$ be a compact, orientable 3-manifold having a component $F \subset \boundary M$ of positive genus. Let $J$ be a Heegaard surface for $M$ and let $H'$ be a parallel copy of $F$ separating $F$ from $J$. Let $H$ be the result of tubing $J$ to $H'$ via a tube that is vertical in the compressionbody between $J$ and $H'$. We say that $H$ is obtained by a $\boundary$-stabilization of $J$ along $F$. Alternatively, one can show that $H$ is $\boundary$-stabilized along $F$ if there exists a compressing disc $D$ for $H$ such that $H|_D$ consists of two surfaces, one of which is a Heegaard surface for $M$ and the other is parallel to $F$. The disc $D$ is called a \defn{$\boundary$-stabilizing} disc for $H$.

Hayashi-Shimokawa's theorem relies on the classification of Heegaard splittings of $(\text{surface}) \times I$  by Scharlemann-Thompson \cite{ST-classification}. Our work, also relying on the Waldhausen and Scharlemann-Thompson theorems, gives a somewhat different proof of Hayashi-Shimokawa's theorem. Indeed, we prove the following which recasts the Casson-Gordon theorem and the Hayashi-Shimokawa theorem in the mold of Haken's Lemma. In the following, as in the rest of the paper, an \defn{essential surface} in a 3-manifold is a surface that is incompressible, not $\boundary$-parallel, and not a 2-sphere bounding a 3-ball; every 2-sphere is incompressible, but perhaps not essential. The statement we give here is detailed, but hopefully of interest. See Figure \ref{fig: HakenSum} below for a depiction of the claim about Haken Sums. 

\begin{theorem-Structure}[Structure Theorem for Weakly Reducible Heegaard Surfaces]
Suppose that $H$ is an oriented weakly reducible Heegaard surface for a compact, orientable 3-manifold (possibly with boundary) $M$. Then there exists an oriented incompressible surface $F\subset M$ such that:
\begin{enumerate}
\item For each component $F'$ of $F$, $F' \cap H$ consists of a single simple closed curve, which is inessential in $F'$. The curves of $F \cap H$ bound discs in $F$ all on the same side of $H$; we may perform the construction so that the discs lie on which ever side we wish.
\item $F \cap H$ separates $H$.
\item For each component $F'$ of $F$, $-\chi(F') < -\chi(H)$.
\item There is a weak reducing pair of discs separated by $F$. 
\item If some component of $F$ is a 2-sphere bounding a 3-ball, then there is a stabilizing pair of discs for $H$ disjoint from $F$
\item If some component of $F$ is $\boundary$-parallel, then there is a $\boundary$-stabilizing disc for $H$ disjoint from $F$.
\item There is a generalized Heegaard surface $\mc{H}$ for $M$ such that $\mc{H}^+$ is isotopic to $H+F$, the Haken sum of $H$ and $F$, and $\mc{H}^- = F$; furthermore, the thick surfaces $\mc{H}^+$ are strongly irreducible.
\end{enumerate}
\end{theorem-Structure}

\begin{figure}[ht!]
\centering
\includegraphics[scale=0.12]{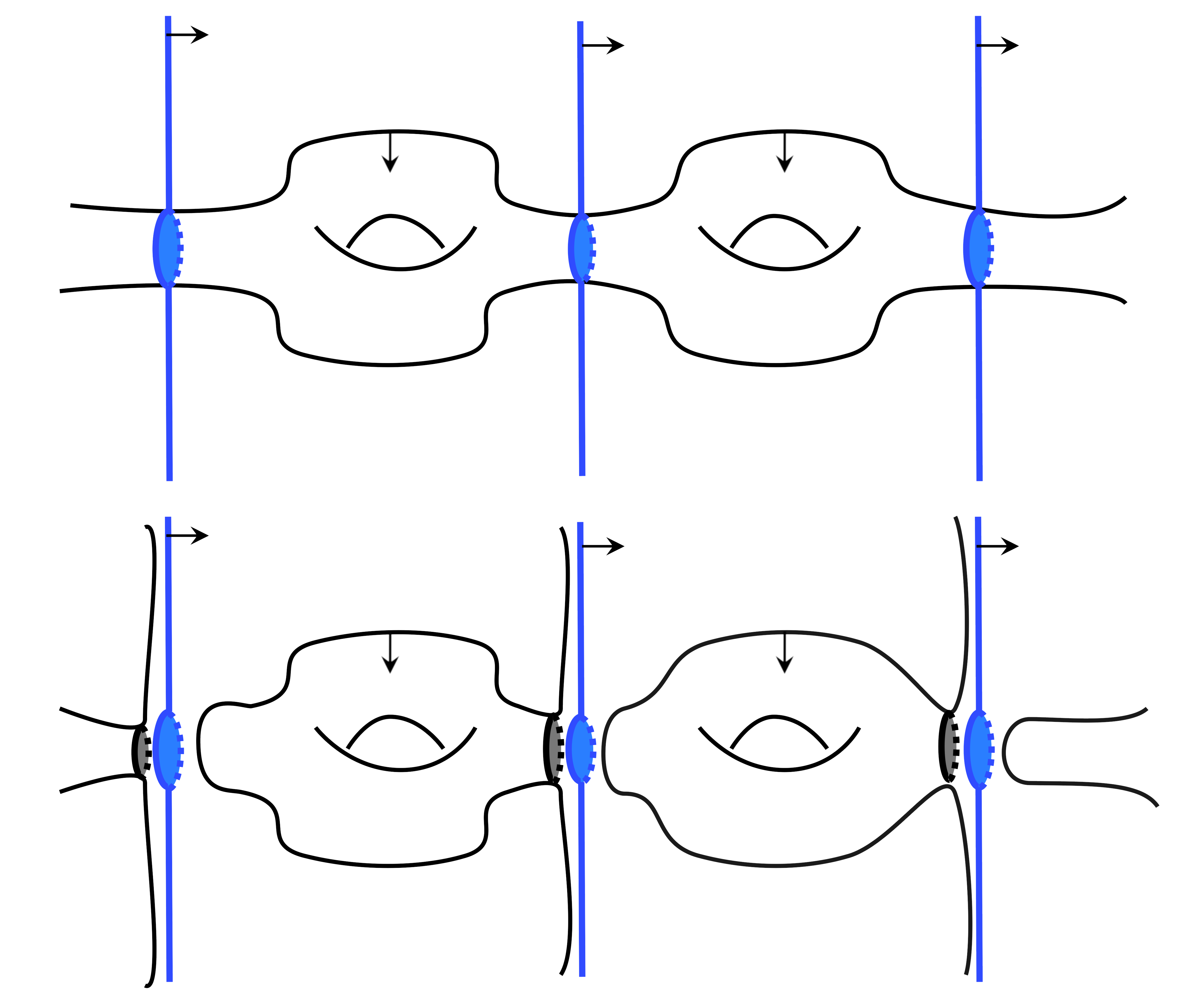}
\caption{The top figure shows a schematic depiction of the Heegaard surface $H$ (in black) and the incompressible surface $F$. For both surfaces we indicate transverse orientations. In the second figure, in black, we show the Haken sum $H+F$ of $H$ and $F$; this is the surface obtained by smoothing the intersections using the orientations. According to Theorem \ref{GCT}, after a small isotopy $(H+F) \cup F$ is a generalized Heegaard surface. }
\label{fig: HakenSum}
\end{figure}

\section{Outline}

Heegaard splitting theory has reached considerable maturity as a mathematical subject. A key insight that is the basis of many modern results is the realization (perhaps originating in \cites{G3, Kobayashi}) that strongly irreducible Heegaard splittings can isotoped to intersect essential surfaces in circles of intersection that are mutually essential or mutually inessential. This result does not use Haken's Lemma; we sketch the proof in Lemma \ref{SISHL}. Before expounding on some of the details, here is a brief sketch of the argument proving the Strong Haken Theorem, for the experts. For simplicity, assume that $H$ is a Heegaard surface for a closed, connected, orientable, reducible 3-manifold $M = M_1 \# M_2$ where both $M_1$ and $M_2$ are prime and irreducible (hence, not 3-balls). Let $S \subset M$ be an essential sphere. In this simplified setting $S$ is unique up to isotopy. (Consequently, Haken's Lemma will directly imply the Strong Haken Theorem, however to give an argument that applies more generally we won't use that fact.) Thin $H$ to a generalized Heegaard surface $\mc{H}$ consisting of thick surfaces $\mc{H}^+$ and thin surfaces $\mc{H}^-$. This can be done so that $\mc{H}^-$ is incompressible and each component of $\mc{H}^+$ is strongly irreducible. The incompressibility of $\mc{H}^-$ and the strong irreducibility of $\mc{H}^+$ can be used to isotope $\mc{H}$ to be disjoint from $S$. It then follows that some component of $\mc{H}^-$ is an essential sphere $P$. By our simplifications, the surface $S$ must be parallel to $P$ and surgery on $P$ produces $M_1$ and $M_2$. On each side of $P$ amalgamate $\mc{H}$ to Heegaard surfaces $H_1$ and $H_2$ for $M_1$ and $M_2$. We may then amalgamate $H_1 \cup P \cup H_2$ to a Heegaard surface $J$. We perform this amalgamation carefully. Start by sliding the feet of the 1-handles,  of the compressionbody between $P$ and $H_1$ so that when we extend them to lie on $H_2$, we need only extend a single foot. This ensures that $P$ (and, in this simplified setting, $S$) intersect $J$ in a single simple closed curve. We may then appeal to a result of Lackenby \cite{Lackenby}*{Prop. 3.1} which guarantees that $H$ is isotopic to $J$. In the general setting, where $M$ admits multiple essential spheres and discs, and especially when some of those spheres may be nonseparating, we need to do more work to implement the concept.

Here is a bit more information on the overall strategy. Thin position for 3-manifolds is technique for acquiring topological information from a weakly reducible Heegaard splitting of a 3-manifold $M$. It involves replacing a Heegaard surface with a (likely disconnected surface) called a \emph{generalized Heegaard surface}; each component of a generalized Heegaard surface is either a \emph{thick surface} or a \emph{thin surface}. The thick surfaces are Heegaard surfaces for the regions of $M$ complementary to the thin surfaces.  The basic idea (originating with Casson and Gordon \cite{CG}) is to decompose a weakly reducible Heegaard surface into three mutually disjoint surfaces (each is potentially disconnected), two of which are thick and one of which is thin.  Thinking of a Heegaard splitting as a sequence of handle attachments, where 0-handles and 1-handles are attached before 2-handles and 3-handles, this operation (called ``weak reduction'') is equivalent to attaching a 2-handle (and possibly 3-handle) prior to attaching a 1-handle (and possibly a 0-handle). If one or both of the thick surfaces are themselves weakly reducible we can repeat the operation. The end result is a collection of thick and thin surfaces. If it is ever the case that a thick and thin surface are parallel, we remove them from the collection. This way of creating and removing thick and thin surfaces is called \defn{thinning}.  Scharlemann and Thompson \cite{ST} produce a complexity that decreases under thinning; a collection of thin and thick surfaces is \emph{thin} if it minimizes their complexity. In a thin collection, the thin surfaces are incompressible and the thick surfaces are strongly irreducible. (In \cite[Rule 5]{ST}, the authors appeal to Lemma \ref{HL-disc} for this fact; we replace it with Lemma \ref{SISHL}.) Scharlemann and Thompson observe that their complexity is (in a certain sense) additive under connected sum. Although they do not indicate what they had in mind for a proof, it is likely they intended to use Haken's Lemma, together with the prime decomposition theorem for 3-manifolds. Our route on the other hand, proves Haken's Lemma and the Strong Haken Theorem using (in a certain sense) additivity.  

We prove Haken's Lemma in Theorem \ref{Haken Lem} using thin position machinery and its inverse, amalgamation, a technique due to Schultens \cite{Schultens}. Amalgamation converts a generalized Heegaard surface back into a Heegaard surface. In principle, there are innumerable different ways of amalgamating a generalized Heegaard surface. However, the aforementioned result of Lackenby will show that the results are all isotopic. This allows us, in Section \ref{amalg}, to make certain choices during the amalgamating process to control the intersection with thin spheres. 

Also, in Section \ref{amalg} we prove the Structure Theorem for Weakly Reducible Heegaard Surfaces (Theorem \ref{GCT}).

Our proof of the Strong Haken Theorem requires one additional technique, we call \emph{juggling}: it allows thin spheres to be moved to different places with respect to the other components of a generalized Heegaard surface. We explain it in Section \ref{juggling}. 

The proof of the Strong Haken Theorem is then carried out at the end of Section \ref{juggling} and in Section \ref{strong haken} as follows. We are given a Heegaard surface $H$ for a 3-manifold that contains properly embedded essential spheres and discs. We are also given a mutually disjoint collection $\mc{S}$ of essential spheres and discs. We thin $H$ to arrive at a thin generalized Heegaard surface $\mc{H}$. We use juggling to ensure it is disjoint from $\mc{S}$. We then carefully amalgamate $\mc{H}$ to a Heegaard surface $J$ and appeal to a version of Lackenby's result (Corollary \ref{Lack thm}) to know that $H$ is isotopic to $J$. 

\subsection{A word on notation}
Unless otherwise specified all 3-manifolds $M$ are compact, connected, and orientable. All surfaces are compact and orientable; if they are said to lie in a 3-manifold they are usually assumed to be properly embedded. In a directed graph (aka \defn{digraph}) a \defn{coherent cycle} (resp. \defn{coherent path}) is an embedded oriented loop (resp. path) such that the orientation of the loop (resp. path) coincides with the orientation of each edge it traverses. As in graph theory, a digraph is \defn{acyclic} if it does not contain a coherent cycle; the underlying undirected graph of an acyclic digraph may contain cycles. We let $I = [0,1]$.  For a (properly embedded) surface $S$ in a 3-manifold $M$, we let $M\setminus S$ denote the result of removing a regular neighborhood of $S$ from $M$. If $D$ is the union of pairwise disjoint discs properly embedded in $M$, we let $M|_D$ denote the result of $\boundary$-reducing $M$ along $D$. The \defn{scars} from $D$ are the discs in $\boundary M|_D$ that are the feet of the 1-handles with cocores $D$ removed from $M$. Similarly, if $S \subset M$ is a sphere, we let $M|_S$ denote the result of surgery along $S$, that is the result of removing a regular neighborhood of $S$ from $M$ and then gluing in 3-balls to the resulting boundary components. In $M\setminus S$, the \defn{scars} from $S$ are the feet of the copies of $S^2 \times I$ removed from $M$. In $M|_S$, the \defn{scars} from $S$ are the 3-balls that were glued to the scars in $M\setminus S$.

\section{Compressionbodies and Generalized Heegaard Surfaces.}

A \defn{compressing disc} for a surface $S$ is an embedded disc $D \subset M$  such that:
\begin{enumerate}
\item the simple closed curve $\boundary D$ is contained in $S$, 
\item the interior of $D$ is disjoint from $S$, and 
\item the curve $\boundary D$ does not bound a disc in $S$. 
\end{enumerate}
An \defn{s-disc} is a generalization of ``compressing disc'': the definition is identical except we replace (3) with:
\begin{enumerate}
\item[(3')] There is no disc $E \subset S$ with $\boundary D = \boundary E$ such that $D \cup E$ bounds a ball in $M$ with interior disjoint from $S$.
\end{enumerate}
A \defn{semi-compressing disc} for $S$ is an s-disc that is not a compressing disc. It necessarily has inessential boundary in $S$. If $\Delta$ is the union of mutually disjoint s-discs for $S$, we can \defn{compress} $S$ to a (possibly disconnected) surface $S|_\Delta$ by letting $S|_\Delta$ be the result of removing a regular neighborhood $N(\Delta) \approx (\Delta \times I)$ of $\boundary \Delta$ from $S$ and then taking the union with $\Delta \times \boundary I$. Observe that $S|_\Delta$ inherits a normal orientation from $S$ and that each disc in $\Delta$ leaves two scars on $S|_\Delta$. 

A surface $S$ in a 3-manifold $M$ is:
\begin{itemize}
\item  \defn{compressible} if there is a compressing disc for $S$ and \defn{incompressible} if there is no such disc; 
\item \defn{weakly reducible} if there exist disjoint s-discs $D_1$ and $D_2$ for $S$, with boundaries on the same component of $S$, such that near $S$, the discs $D_1$ and $D_2$ lie on opposite sides of the component of $S$ containing their boundaries and \emph{at least one of} $D_1$ or $D_2$ is a compressing disc. (``Near $S$'' means that in a regular neighborhood $N(S)$ of $S$, which may be nonseparating in $M$, each $D_i \cap N(S)$ is a collar neighborhood of $\boundary D_i$, and these collar neighborhoods lie on opposite sides of $S$);
\item \defn{strongly irreducible} if it is not weakly reducible.
\end{itemize}

A \defn{ball compressionbody} $C$ is an oriented 3-manifold homeomorphic to a 3-ball. In this case, we let $\boundary_+ C = \boundary C$ and $\boundary_- C = \nil$. A \defn{product compressionbody} $C$ is an oriented 3-manifold homeomorphic (via an orientation-preserving homeomorphism) to $F \times I$ where $F$ is a closed, connected surface. We let $\boundary_+ C$ be the component of $\boundary C$ taken to $F \times \{1\}$ and $\boundary_-C$ be the component taken to $F \times \{0\}$. A \defn{trivial compressionbody} is either a ball compressionbody or a product compressionbody. A \defn{compressionbody} is a compact, connected, oriented 3-manifold $C$ with nonempty boundary, such that:
\begin{enumerate}
\item One component of $\boundary C$ is designated as $\boundary_+ C$ and the union of the other components is designated as $\boundary_- C$,
\item There is a (possibly empty) collection $\Delta$ of mutually disjoint s-discs for $\boundary_+ C$, such that $C|_\Delta$ is the union of trivial compressionbodies and the designations of $\boundary_+$ and $\boundary_-$ are inherited from $C$. Such a collection $\Delta$ is a \defn{complete collection of s-discs for $C$}.
\end{enumerate}
A \defn{punctured trivial compressionbody} is the result of removing the open regular neighborhood of a finite (possibly empty) collection of points from the interior of a trivial compressionbody. The positive boundary of a punctured product compressionbody is the same as the positive boundary prior to removing the open balls. Observe that a punctured product compressionbody is a compressionbody. 

A \defn{vertical arc} $\alpha$ in a compressionbody $C$ is a properly embedded arc such that there exists a complete collection $\Delta$ of s-discs for $C$ disjoint from $\alpha$ such that $\alpha$ is isotopic to a fiber in one of the product compressionbody components of $C|_\Delta$. 

A \defn{spine} $\Gamma$ for $C$ is the union of $\boundary_- C$ with a properly embedded 1-dimensional CW-complex in $C \setminus \boundary_+ C$ such that $C$ deformation retracts to $\Gamma$. A spine can be constructed by attaching to $\boundary_- C$ or to a vertex (if $\boundary_- C = \nil$) the cocores of the 2-handles that form a regular neighborhood of any complete collection of s-discs.

\begin{lemma}\label{subcompbodies}
The following are true for a compressionbody $C$:
\begin{enumerate}
\item $\boundary_- C$ is incompressible in $C$.
\item If $D \subset C$ is the union of mutually disjoint s-discs for $\boundary_+ C$, then there exists a complete collection $\Delta$ of s-discs for $C$  such that $D \subset \Delta$.
\item If $D \subset C$ is the union of mutually disjoint s-discs for $\boundary_+ C$, then $C|_D$ is the disjoint union of compressionbodies, with orientations and $\boundary_+$ and $\boundary_-$ designations inherited from $C$.
\item $C$ does not contain an s-disc for $\boundary_+ C$ if and only if it is a trivial compressionbody. If $C$ contains an s-disc for $\boundary_+ C$ but does not contain a compressing disc for $\boundary_+ C$, then it is a punctured product compressionbody.
\item\label{5} If $S \subset C$ is a closed connected incompressible surface in the interior of $C$, then $S$ is separating and separates $C$ into two compressionbodies. One of the compressionbodies has (a copy of) $S$ as its positive boundary and components of $\boundary_- C$ as negative boundary and the other has $\boundary_+ C$ as its positive boundary and (a copy of) $S$ and components of $\boundary_- C$ as its negative boundary. The compressionbody containing $S$ as its positive boundary has at most one negative boundary component that is not a sphere; it contains such a component $F$ if and only if $S$ is not a sphere. If $S$ is not a sphere that compressionbody is a punctured product compressionbody between $S$ and $F$.
\end{enumerate}
\end{lemma}

\begin{proof}
Properties (1) - (3) are well-known and easily proved. See \cite[Lemma 3.5]{TT1} for a generalization to the situation when $C$ contains a certain type of properly-embedded graph. The case when we are reducing along a semi-compressing disc requires the use of the lightbulb trick (see \cite{HW} for a description). Property (4) follows from the definition of compressionbody.

For the proof of (5), suppose that $S \subset C$ is a closed, connected, incompressible surface in the interior of $C$. Let $\Delta$ be a complete collection of s-discs for $C$ chosen to intersect $S$ minimally. An innermost disc argument then shows that $S$ intersects only semi-compressing discs in $\Delta$. Let $\Delta' \subset \Delta$ be the union of the compressing discs and let $W \subset C|_{\Delta'}$ be the component containing $S$. Let $\wihat{W}$ be the result of capping off spherical components of $\boundary_- W$ with 3-balls. Note that $\wihat{W}$ is either a 3-ball or a product compressionbody. This does not change the incompressibility of $S$. Hence, if $\wihat{W}$ is a 3-ball, then $S$ is a 2-sphere bounding a 3-ball in $\wihat{W}$, and the result follows easily. Suppose that $\wihat{W}$ is a product compressionbody. Waldhausen's classification of surfaces in product manifolds \cite{Waldhausen-sufflarge} shows that either $S$ is parallel in $\wihat{W}$ to the unique component of $\boundary_- \wihat{W}$ or is a sphere bounding a 3-ball in $\wihat{W}$. Conclusion (5) follows by reconstituting $C$ from $\wihat{W}$.
\end{proof}

\begin{remark}
As in Figure \ref{semi-discs for negative boundary}, $\boundary_- C$ may admit a semi-compressing disc. This observation will necessitate some extra work in what follows. However, Lemma \ref{thin s-discs} shows that such discs still cut off compressionbodies.
\end{remark}

\begin{figure}[ht!]
\labellist
\small\hair 2pt
\pinlabel {$C$} at 166 120
\pinlabel {$\boundary_+ C$} [r] at 20 211
\pinlabel {$\boundary_- C$} [r] at 97 34
\pinlabel {$\boundary_- C$} [t] at 316 87
\pinlabel {$D$} [bl] at 375 150
\endlabellist
\centering
\includegraphics[scale=0.35]{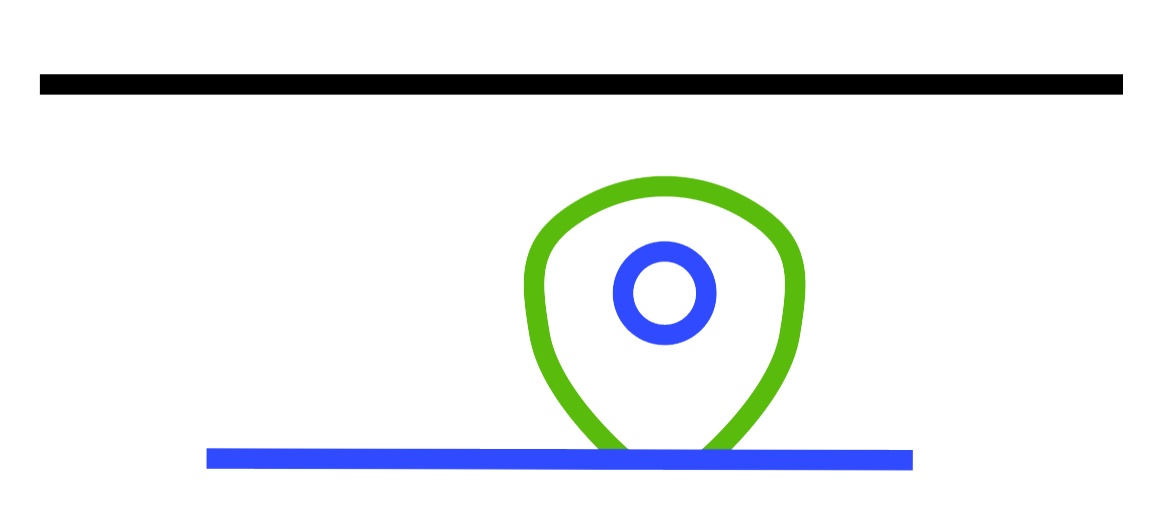}
\caption{It is possible for the negative boundary of a compressionbody $C$ to admit a semi-compressing disc $D$.}
\label{semi-discs for negative boundary}
\end{figure}

\begin{lemma}\label{thin s-discs}
Suppose that $C$ is a compressionbody and that $D$ is a semi-compressing disc for $\boundary_- C$. Then, together with a disc $E \subset \boundary_- C$, $D$ bounds a compressionbody in $C$ disjoint from $\boundary_+ C$. This compressionbody is the result of removing at least one open 3-ball from a 3-ball with boundary $D \cup E$.
\end{lemma}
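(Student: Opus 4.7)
The plan is to apply Lemma \ref{subcompbodies}(5) to a suitable pushoff of the sphere $D \cup E$. Since $D$ is a semi-compressing disc, $\boundary D$ is inessential in the component $F$ of $\boundary_- C$ containing it, so it bounds a disc $E \subset F$. I would first push $E$ slightly off $F$ into the interior of $C$ along a collar of $\boundary_- C$, obtaining a parallel copy $E'$, and correspondingly extend $D$ across the collar to a disc $\tild D$ with $\boundary \tild D = \boundary E'$. The sphere $S' := \tild D \cup E'$ then lies in the interior of $C$.

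A sphere has no compressing discs, so $S'$ is incompressible and Lemma \ref{subcompbodies}(5) applies: $S'$ separates $C$ into two compressionbodies. Let $B$ be the one with $\boundary_+ B = S'$. By the last sentence of Lemma \ref{subcompbodies}(5), since $S'$ is a sphere, every component of $\boundary_- B$ must itself be a sphere, so $B$ is a compressionbody with $\boundary_+ B \cong S^2$ and $\boundary_- B$ a disjoint union of, say, $k$ spheres. A direct handle analysis (or the Euler-characteristic count $\chi(\boundary_+ B) = \chi(\boundary_- B) - 2(\text{\# 1-handles})$, which forces $k-1$ one-handles arranged as a tree connecting $k$ copies of $S^2 \times I$) then shows that $B$ must be a 3-ball with $k$ open 3-balls removed.

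Undoing the pushoff isotopes $B$ to a region $\wihat B \subset C$ whose outer boundary is the sphere $D \cup E$ and whose inner boundaries are the $k$ sphere components of $\boundary_- B \subset \boundary_- C$; hence $\wihat B$ is still a 3-ball (with boundary $D \cup E$) minus $k$ open 3-balls. Since $\boundary_+ C$ lies in the complementary side of $S'$, we have $\wihat B \cap \boundary_+ C = \nil$. To confirm $k \geq 1$, suppose instead $k = 0$: then $\wihat B$ is a 3-ball meeting $\boundary C$ only along the disc $E$, so its interior is disjoint from $\boundary_- C$; this would exhibit $D \cup E$ as bounding a ball in $C$ with interior disjoint from $\boundary_- C$, contradicting condition (3') in the definition of the s-disc $D$. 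The main obstacle I expect is carefully tracking which side of $S'$ contains the thin shell between $E$ and $E'$: when $F$ is a sphere, the shell sits inside $B$ (and $F$ itself appears as one of $B$'s inner spheres), while when $F$ is not a sphere the shell lies in the complementary side and gets attached to $B$ along $E'$ when the pushoff is undone—in either case the final region is a punctured ball with outer boundary $D \cup E$.
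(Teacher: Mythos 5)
Your overall route is viable and not far in spirit from the paper's: you push the sphere $D \cup E$ into the interior, apply Lemma \ref{subcompbodies}(5) to the resulting sphere $S'$, identify the side $B$ with $\boundary_+ B = S'$ as a punctured ball via a handle/Euler-characteristic count (a count which, as a minor point, should also allow 3-ball pieces, not only copies of $S^2\times I$), and use condition (3') to force at least one puncture. The paper instead chooses a complete disc collection disjoint from $D$ and caps off spherical components of $\boundary_- C$, but these are close relatives. The genuine gap is in the case when the component $F$ of $\boundary_- C$ containing $\boundary D$ is a sphere, where the two choices of the disc $E \subset F$ are not interchangeable and your final paragraph gets the bookkeeping wrong in both directions. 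First, when $F$ is a sphere the shell between $E$ and $E'$ need not lie in $B$. Second, and more seriously, when the shell does lie in $B$ (equivalently, when $F \subset \boundary_- B$), ``undoing the pushoff'' is not an isotopy of $B$: the shell is then a tunnel in $B$ joining the disc $E' \subset S'$ to the disc $E \subset F$, and the region of $C \setminus D$ on the side away from $\boundary_+ C$ is $B$ with this tunnel removed. Its outer sphere is $D$ together with the \emph{complementary} disc $F \setminus E$, and it has $k-1$, not $k$, inner spheres; so your concluding claim that ``in either case the final region is a punctured ball with outer boundary $D \cup E$'' is false, the identification of the topology after carving out the tunnel needs its own (short) argument, and nonemptiness of the puncture set now requires $k \geq 2$, which your $k \geq 1$ step does not supply (it does follow from (3'), but applied to the other disc). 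A concrete instance: let $C$ be $S^2 \times I$ with an open ball removed, $F = S^2 \times \{0\}$, and let $D$ be a disc with $\boundary D \subset F$ positioned so that $D$ together with a disc $E_1 \subset F$ encloses the puncture sphere. With the choice $E = F \setminus E_1$ your recipe predicts a twice-punctured ball with outer boundary $D \cup E$, whereas the region cut off by $D$ away from $\boundary_+ C$ is a once-punctured ball with outer boundary $D \cup E_1$.

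Since the lemma only asserts the existence of some disc $E$, the repair is easy and is essentially what the paper does when it selects $E$ before analyzing the region: choose $E$ from the outset to be the disc of $F \setminus \boundary D$ adjacent to the component of $C \setminus D$ not containing $\boundary_+ C$ (when $F$ is not a sphere the unique disc automatically has this property, which is why your non-sphere case is fine). With that choice the shell always lies on the $\boundary_+ C$ side of $S'$, undoing the pushoff merely attaches an external collar to $B$ along $E'$, and your case-(a) computation, including the $k \geq 1$ argument from (3'), goes through verbatim.
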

\begin{proof}
By definition of semi-compressing disc, $\boundary D$ bounds at least one disc in $\boundary_- C$. As every sphere in a compressionbody is separating and since $\boundary_- C$ is incompressible in $C$, there is such a disc $E$ such that the sphere $D \cup E$ bounds a submanifold $W\subset C$ disjoint from $\boundary_- C$. The result follows as in the proof of Lemma \ref{subcompbodies} (\ref{5}).
\end{proof}

The next observation concerning handles is key to the process of amalgamating a generalized Heegaard surface so that it intersects a sphere exactly once. See Figure \ref{handleslide} for an example.
\begin{lemma}[Monopod Lemma]\label{monopod lemma}
Given any compressionbody $C$ with $\boundary_- C \neq \nil$, there exists a collection of s-discs $D \subset C$ such that each component of $C|_D$ is either a handlebody or a product compressionbody and each component contains exactly one scar from $D$. Furthermore, if $A \subset C$ is the union of finitely many pairwise disjoint vertical annuli and compressing discs for $\boundary_+ C$, we may choose $D$ to be disjoint from $A$.
\end{lemma}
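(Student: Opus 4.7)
The plan is to exploit the spine structure of $C$ to construct $D$. Start by choosing a spine $\Gamma = \boundary_- C \cup G$ of $C$, where $G$ is an embedded graph in $C$ with some vertices on $\boundary_- C$. The edges of $G$ are cocores of the 2-handles in the dual handle decomposition of $C$; equivalently, each edge $e$ of $G$ is dual to an s-disc whose cocore is $e$. The goal is to modify $G$ via edge slides to a ``monopod'' configuration in which each component $F$ of $\boundary_- C$ is attached to $G$ by exactly one edge $e_F$ at a single vertex of $F$. The collection $D$ is then the union of the s-discs $D_F$ dual to the $e_F$.

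The key step is the edge slide. If a component $F$ of $\boundary_- C$ initially meets $G$ at $k \geq 2$ distinct vertices $p_1, \ldots, p_k$, I slide the endpoints $p_2, \ldots, p_k$ along arcs on $F$ so that they merge with $p_1$. Each slide is an elementary modification of the spine that preserves the compressionbody up to homeomorphism, since it is a standard handle-slide of the dual 1-handles. Iterating over all components of $\boundary_- C$ yields a spine in the monopod form, in which each $F$ meets $G$ at a single vertex.

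With $D = \bigcup_F D_F$, cutting $C$ along $D$ produces: for each component $F$ of $\boundary_- C$, a product compressionbody $F \times I$ containing exactly one scar (from $D_F$) on its positive boundary; and, when $G$ has nontrivial internal structure, a handlebody $V$ that is the regular neighborhood of $G$ with the attaching edges $e_F$ removed. Each product compressionbody piece carries exactly one scar from $D$, which is the content of the ``monopod'' structure; the handlebody $V$ absorbs the remaining scars.

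For the disjointness condition, choose the initial spine to be compatible with $A$: realize each vertical annulus in $A$ as a neighborhood of an arc in $\boundary_- C \times I$ parallel to a product fiber, and realize each compressing disc of $A$ as the s-disc dual to an edge of $G$ lying in the internal part of $G$ (away from the future $e_F$'s). With this setup, the edge slides of the previous step can be performed in a small neighborhood of $\boundary_- C$, in the complement of $A$, so that $D$ ends up disjoint from $A$. The main obstacle is exactly this disjointness: the initial spine must be chosen to accommodate $A$'s position, and the slides must be localized carefully near $\boundary_- C$ so that neither the arcs along which we slide endpoints nor the resulting $e_F$'s cross $A$.
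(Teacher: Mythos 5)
Your overall strategy is the paper's: present $C$ as $\boundary_- C \times I$ plus 1-handles (equivalently, via a spine), slide to a ``monopod'' configuration, and take the discs dual to the attaching edges. But there is a genuine gap in the key step. Sliding the endpoints $p_2,\dots,p_k$ along arcs on $F$ so that they ``merge with $p_1$'' only arranges that $G$ meets $F$ in a single \emph{vertex}; it does not reduce the number of \emph{edges} of $G$ incident to $F$, which is what your monopod configuration (``attached to $G$ by exactly one edge $e_F$'') requires. Merging attachment points is just the inverse of a small perturbation of the spine and changes nothing of substance: afterwards $k$ edges still run from $F$ into $C$, there is no single edge $e_F$, and the disc dual to any one of those edges does not separate a product neighborhood of $F$ from the rest of $C$, so your description of $C|_D$ (a product $F\times I$ with one scar, plus a handlebody absorbing the other scars) does not follow. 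The move that is actually needed --- and is what the paper does --- is to slide the ends of all but one of the edges at $F$ \emph{off of} $\boundary_- C$ and onto another edge (onto another 1-handle), creating interior trivalent vertices, so that exactly one ``trunk'' edge meets $F$; equivalently, after your merge, push the merged vertex into the interior of $C$, creating a trunk edge. Note this is not a standard handle slide of the dual 1-handles in the usual sense (such a slide keeps every foot on the attaching surface and so can never reduce the number of feet on $F$); it is the slide of a 1-handle end onto another 1-handle.

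The disjointness clause has a related problem. You propose to localize the slides ``in a small neighborhood of $\boundary_- C$, in the complement of $A$,'' but $A \cap \boundary_- C$ is a union of circles that may separate the attachment points on a component $F$, so arcs on $F$ joining those points can be forced to cross $A$; localizing near $\boundary_- C$ is exactly where the argument is most constrained. The paper's slides are not confined to $\boundary_- C$: after first choosing the 1-handles so that their co-cores are disjoint from the annuli of $A$ and parallel to its disc components, the ends are slid over other 1-handles (which are disjoint from $A$), and it is this extra freedom that the disjointness claim relies on. So while your intended endpoint (products $F\times I$ with a single scar each, handlebodies carrying the remaining scars --- which is indeed the form of the statement that is used later) is correct, the slides as you describe them neither produce the single-edge configuration nor can they in general be kept away from $A$.
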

\begin{proof}
The compressionbody $C$ is obtained from $\boundary_- C \times I$ by attaching 1-handles to $\boundary_- C \times \{1\}$. We may slide the ends of the 1-handles onto other 1-handles to ensure that each component of $\boundary_- C \times \{1\}$ is incident to exactly one foot of the 1-handles. Given $A$, we may start with a choice of 1-handles whose co-cores are disjoint from the annuli components of $A$ and parallel to the disc components of $A$. The slides may then be performed so that the resulting discs $D$ are disjoint from $A$.
\end{proof}

\begin{figure}[ht!]
\labellist
\small\hair 2pt
\pinlabel {$\boundary_+ C$} [r] at 15 79
\pinlabel {$\boundary_- C$} [r] at 15 25
\endlabellist
\centering
\includegraphics[scale=0.3]{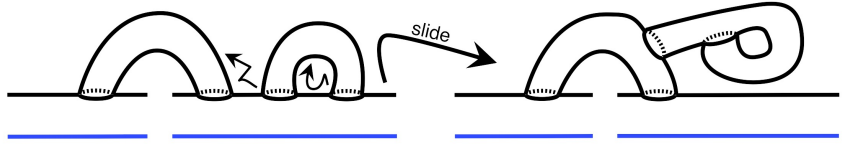}
\caption{By sliding 1-handles incident to $\boundary_+ C$, we may find discs separating a compressionbody into product compressionbodies and handlebodies so that each product compressionbody contains a single scar.}
\label{handleslide}
\end{figure}

A \defn{generalized Heegaard surface} $\mc{H}$ for  a 3-manifold $M$ is a closed properly embedded surface such that the following hold:
\begin{enumerate}
\item $\mc{H} = \mc{H}^+ \cup \mc{H}^-$, where each of $\mc{H}^+$ and $\mc{H}^-$ are unions of components of $\mc{H}$ and $\mc{H}^+ \cap \mc{H}^- = \nil$.
\item The closure of the complement of $\mc{H}$ in $M$ (denoted $M\setminus \mc{H}$) is the union of compressionbodies. 
\item $\mc{H}^+ = \bigcup\limits_C \boundary_+ C$
\item $\boundary M \cup \mc{H}^- = \bigcup\limits_C \boundary_- C$
\end{enumerate} 
where the unions appearing in the last two items are over all the components of $M \setminus \mc{H}$ and the equality holds after gluing. A generalized Heegaard surface $\mc{H}$ is \defn{oriented} if each component of $\mc{H} \cup \boundary M$ is assigned a transverse orientation such that for each compressionbody $C \subset M \setminus \mc{H}$, $C$ is a cobordism from $\boundary_- C$ to $\boundary_+ C$ or vice-versa. We say that a compressionbody $C$ is \defn{above} (resp. \defn{below}) $\boundary_+ C$ if the transverse orientation for $\boundary_+ C$ points into (resp. out of) $C$. Two oriented generalized Heegaard surfaces are equivalent if they are ambient isotopic in $M$, via an orientation-preserving isotopy. We will only work with oriented generalized Heegaard surfaces in what follows.

\begin{definition}\label{dual digraph}
Associated to each oriented generalized Heegaard surface $\mc{H}$ is a \defn{dual digraph}. This is a directed graph with a vertex for each component of $M\setminus \mc{H}$ and each directed edge from one vertex to another corresponding to an (oriented) component of $\mc{H}$. Some edges correspond to thick surfaces and some to thin surfaces. At each vertex, either there is one incoming thick edge and all thin edges are outgoing or there is one outgoing thick edge and all thin edges are incoming. In particular, the requirement that each compressionbody is a cobordism between $\boundary_- C$ and $\boundary_+ C$ ensures that no edge is a loop. We say that $\mc{H}$ is \defn{acyclic} if its dual digraph is acyclic. (These dual digraphs are essentially the \emph{fork complexes} of \cite{SSS}.) The dual digraph for an acyclic $\mc{H}$ may have underlying undirected graph with cycles.
\end{definition}

A generalized Heegaard surface is a Heegaard surface if and only if it is connected. In which case, $\mc{H} = \mc{H}^+$ and $\mc{H}^- = \nil$. 

\begin{lemma}\label{SISHL}
Suppose that $H$ is a strongly irreducible Heegaard surface for $M$ and that $\mc{S}$ is a nonempty collection of mutually disjoint essential spheres and discs. Then $H$ can be isotoped so that for each sphere $S_0 \subset \mc{S}$, $H \cap S_0$ is the union of loops that are inessential in $H$ and for each disc $S_0 \subset \mc{S}$, $H \cap S_0$ is the union of loops, at most one of which is essential in $H$. Furthermore, any component of $H \cap \mc{S}$ which is innermost in $\mc{S}$ bounds, in $\mc{S}$, an s-disc for $H$.
\end{lemma}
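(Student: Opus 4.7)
The plan is to put $H$ in general position with $\mc{S}$, isotope $H$ to lexicographically minimize the pair $(a,b)$ where $a$ is the number of loops of $H \cap \mc{S}$ essential in $H$ and $b$ is the total number of loops of $H \cap \mc{S}$, and then derive both parts of the conclusion from strong irreducibility.

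I would first establish the final sentence. Suppose $\gamma \subset H \cap \mc{S}$ is innermost in $\mc{S}$, bounding a disc $D \subset \mc{S}$ whose interior is disjoint from $H$. If $D$ were not an s-disc for $H$, then by the failure of condition (3') there would be a disc $F \subset H$ with $\partial F = \gamma$ such that $D \cup F$ bounds a ball $B$ in $M$ with $B^\circ \cap H = \nil$. Isotoping $H$ across $B$, replacing $F$ by a push-off of $D$ off $\mc{S}$, removes $\gamma$ along with any loops of $H \cap \mc{S}$ lying in $F$; the latter loops bound subdiscs of $F$ in $H$, hence are inessential in $H$. So $b$ strictly decreases and $a$ does not increase, contradicting minimality. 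Hence $D$ is an s-disc.

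For the main claim, suppose for contradiction that some $S_0 \in \mc{S}$ violates the conclusion, so $H \cap S_0$ contains an essential-in-$H$ loop $\gamma$ (and, if $S_0$ is a disc, a second such loop $\gamma'$). Choose $\gamma$ so that one sub-disc $E \subset S_0$ bounded by $\gamma$ (disjoint from $\partial S_0$ in the disc case) contains no other essential-in-$H$ loops in its interior. I would then construct a compressing s-disc $D_1$ for $H$ with $\partial D_1 = \gamma$ as follows: let $P$ be the planar piece of $S_0 \setminus H$ lying inside $E$ and adjacent to $\gamma$; its boundary loops other than $\gamma$ are inessential in $H$, and each bounds a disc in $H$ which may be chosen innermost (pairwise disjoint for distinct boundary loops since different innermost discs in $H$ must be disjoint). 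Attaching these innermost discs to $P$ along the corresponding boundary loops and pushing slightly to the side of $H$ on which $P$ lies yields an embedded disc $D_1 \subset M$ with $\partial D_1 = \gamma$ and interior disjoint from $H$; since $\gamma$ is essential in $H$, the disc $D_1$ is compressing and hence automatically an s-disc. A symmetric construction performed on the other side of $\gamma$ in $S_0$ (sphere case) or from a disjoint sub-disc of $S_0$ bounded by $\gamma'$ (disc case) yields an s-disc $D_2$ on the opposite side of $H$, disjoint from $D_1$ after a small push. Then $D_1, D_2$ are disjoint s-discs on opposite sides of the connected Heegaard surface $H$ with at least one compressing, giving a weak reduction of $H$ and contradicting strong irreducibility.

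The main obstacle is verifying that the second disc $D_2$ lies on the opposite side of $H$ from $D_1$, especially in the sphere case when $H \cap S_0$ has multiple essential-in-$H$ loops so that the ``other side'' piece adjacent to $\gamma$ may have further essential boundary loops that cannot simply be capped off. This requires a careful combinatorial choice based on the tree structure of components of $S_0 \setminus H$ and the alternation of their sides with respect to $H$, possibly by iterating the construction inward along a chain of essential loops to locate an essential loop that admits cap-off pieces on both of its sides. The disc case is similar, requiring the two essential loops $\gamma, \gamma'$ to be chosen so the corresponding cap-offs are on opposite sides of $H$.
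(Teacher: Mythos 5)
Your minimization setup and your proof of the final sentence are sound: in a position lexicographically minimizing (number of essential loops, total loops), an innermost disc of $\mc{S}$ that fails condition (3') yields a ball with interior disjoint from $H$, and pushing $H$ across it deletes $\gamma$ and only loops that are inessential in $H$, contradicting minimality. (This is already a different route from the paper, which proves the whole lemma by a sweepout between spines of the two compressionbodies with a $\pm$ labelling of intervals.) The main claim, however, has a genuine gap, and it is essentially the one you flag at the end but do not resolve. A preliminary problem is that the loops of $\partial P$ other than $\gamma$ need not be innermost in $H$, so the discs they bound in $H$ cannot simply ``be chosen innermost''; they may be nested and may contain other boundary loops of $P$, in which case the pushed-off caps meet the collar annuli of $P$ and the surface you build is not embedded. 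That might be repairable with more care, but the side issue is not.

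Concretely, suppose $S_0$ is a sphere with $H \cap S_0 = \gamma_1 \cup \gamma_2$, both essential in $H$, so $S_0$ decomposes into a disc $F_1$, an annulus $F_2$, and a disc $F_3$, with $F_1$ and $F_3$ on the same side $V$ of $H$ and $F_2$ on the other side $W$. Here both innermost-essential loops have their cap-off pieces in $V$ (indeed $F_1, F_3$ are already compressing discs in $V$), and no loop of $H \cap S_0$ ``admits cap-off pieces on both of its sides,'' so the iteration you propose never produces an s-disc in $W$ and no weak reduction results. Yet this is exactly the kind of configuration the lemma must rule out, and ruling it out requires additional input — for instance, analyzing compressions and $\partial$-compressions of the planar pieces lying in $W$ (here the annulus $F_2$), and in the $\partial$-compressible case dealing with the possibility that the resulting disc has inessential boundary, where one must again invoke minimality or the ball condition. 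None of this is in your sketch, and it is where the real content of the lemma lies; the paper's sweepout argument handles precisely this difficulty through its interval labels, the rule that strong irreducibility forbids a doubly-labelled interval, and the orientability argument that adjacent intervals cannot carry opposite labels. As written, your proposal therefore does not prove the main statement.
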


\begin{proof}
This is a by now standard sweepout argument so we simply sketch the proof. For more details, see (for example) \cite{RS}. Let $\Gamma_-$ and $\Gamma_+$ be spines for the compressionbodies on either side of $H$, transverse to $\mc{S}$. Their complement is a product which allows us to define a continuous function $\phi \co M \to I$. We may perturb it so $\phi|_{\mc{S}}$ is Morse. Note that $\phi^{-1}(0) = \Gamma_-$, $\phi^{-1}(1) = \Gamma_+$ and $H_t = \phi^{-1}(t)$ is isotopic to $H$ for any $t \in I\setminus \boundary I$. The critical values of $\phi$ divide $I$ into open sub-intervals. Label a subinterval with $-$ (resp. $+$) if some (equivalently, any) $t$ in the subinterval has the property that there is a loop in $H_t \cap \mc{S}$ that is essential in $H_t$ and which bounds a disc below $H_t$ (resp. above $H_t$). As $H$ (and, hence, $H_t$) is strongly irreducible no interval is labelled with opposite signs. Since both $H$ and $\mc{S}$ are orientable, it is also the case that no two adjacent intervals have opposite sign. 

Consider either the lowest interval or the highest interval. Let $t$ be in the interval and let $C$ be the compressionbody below or above $H_t$ respectively. We think of $H_t$ as being the frontier of a regular neighborhood of the spine. Let $S_0 \subset \mc{S}$ be a component. Each intersection of $S_0$ with $H$ is either a loop in $C$ parallel to $\boundary S_0$ or a meridian of an edge of the spine. A meridian intersection bounds an s-disc for $H$ in $C$. If some such disc is a compressing disc, the interval is labelled $-$ (if it is the lowest interval) or $+$ (if it is the highest interval). If not, $H_t$ is the surface isotopic to $H$ satisfying the conclusions of the lemma. 

Assume, therefore, that both sides of $H$ admit compressing discs, so the lowest interval is labelled $-$ and the highest is labelled $+$. We conclude that there must be a subinterval with no label. Let $t$ lie in that subinterval. If $H_t \cap \mc{S} = \nil$, we are done, so suppose that $\gamma \subset H_t \cap \mc{S}$ is a component. As $\mc{S}$ is the union of discs and spheres, we may assume that it is innermost in $\mc{S}$. Let $D \subset \mc{S}$ be the innermost disc it bounds. Since the interval containing $t$ has no label, $\boundary D$ is inessential on $H$. If $D$ is not a semi-compressing disc for $H$, we can isotope $H$ so as to remove $\boundary D$ from $H \cap \mc{S}$.
\end{proof}

\section{Heegaard surfaces through thick and thin}

Following \cite{HS}, we reinterpret \cite{ST} in terms of surfaces, rather than handle structures. Suppose that $\mc{H}$ is an oriented generalized Heegaard surface and that $H$ is an weakly reducible thick surface. Let $D_-$ and $D_+$ be unions of pairwise disjoint s-discs for $H$, with $D_-$ below $H$ and $D_+$ above $H$. Let $H_\pm = H|_{D_\pm}$ and let $F = H|_{D_- \cup D_+}$. A small isotopy of $H_+$ above $H$ and $H_-$ below $H$ makes $H_-$, $H_+$, and $F$ mutually disjoint. Let $\mc{J}^+ = (\mc{H} \setminus H) \cup H_- \cup H_+$ and let $\mc{J}^- = \mc{H}^- \cup F$. Then we say that $\mc{J} = \mc{J}^- \cup \mc{J}^+$ is obtained by an \defn{untelescoping} $\mc{H}$ and that $D_-$ and $D_+$ are an \defn{s-weak reducing pair}. It is a \defn{weak reducing pair} if both $D_-$ and $D_+$ contain a compressing disc and a \defn{(s/2)-weak reducing pair} if at least one of $D_-$ or $D_+$ contain a compressing disc. See Figure \ref{fig:untel}.

\begin{figure}[ht!]
\labellist
\small\hair 2pt
\pinlabel {$H$} [r] at 20 80
\pinlabel {$D_-$} [t] at 143 21
\pinlabel {$D_+$} [b] at 63 142
\pinlabel {$H_+$} [l] at 524 110
\pinlabel {$H_-$} [l] at 524 52
\pinlabel {$F$} [l] at 524 83
\endlabellist
\centering
\includegraphics[scale=0.4]{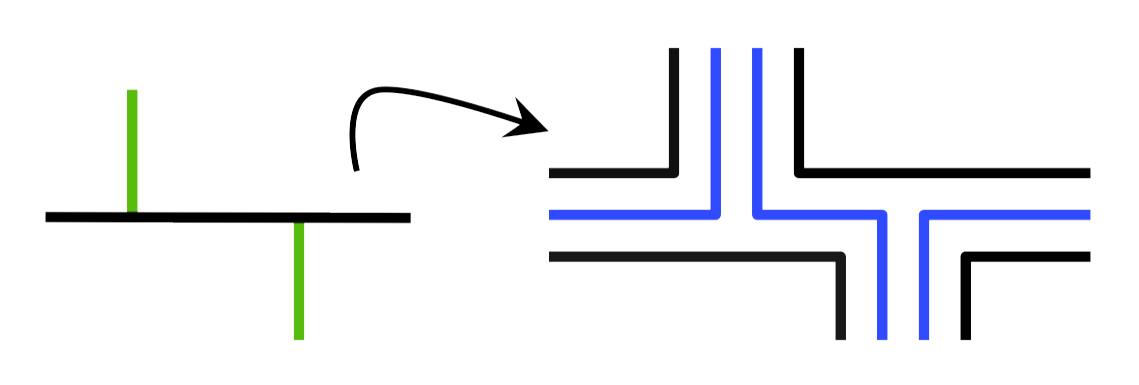}
\caption{Untelescoping a thick surface $H$ using a pair of s-discs $D_-$ and $D_+$.}
\label{fig:untel}
\end{figure}

The following theorem is essentially due to Scharlemann-Thompson, however they do not address the situation with semi-compressing discs; that is explored in depth in \cite{TT1}. In this paper, we somewhat modify both approaches, for simplicity.

\begin{theorem}\label{untel}
If $\mc{H}$ is an oriented generalized Heegaard surface and $\mc{J}$ is obtained from $\mc{H}$ by untelescoping, then $\mc{J}$ is an oriented generalized Heegaard surface. Furthermore, if $\mc{H}$ is acyclic, so is $\mc{J}$.
\end{theorem}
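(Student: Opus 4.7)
The plan is to verify that $\mc{J}$ is an oriented generalized Heegaard surface and that its dual digraph is acyclic (assuming $\mc{H}$'s is). Since untelescoping only modifies the two compressionbodies $C_-$ and $C_+$ of $M \setminus \mc{H}$ adjacent to $H$, all other components of $M \setminus \mc{H}$ persist unchanged in $M \setminus \mc{J}$; the analysis therefore concentrates on the region formerly occupied by $C_- \cup C_+$ and on how $H_- \cup F \cup H_+$ partitions it.

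First I would identify the ``outer'' new components. The subregion of $C_-$ complementary to a regular neighborhood of $H \cup D_-$ is homeomorphic to $C_-|_{D_-}$; by Lemma \ref{subcompbodies}(3) this is a disjoint union of compressionbodies, each with positive boundary a component of $H_- = H|_{D_-}$ and negative boundary inheriting components of $\boundary_- C_-$. Analogously on the $C_+$ side.

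Next comes the technical core: analyzing the ``inner'' components between $H_-$ (or $H_+$) and nearby components of $F$. Let $A_- \subset C_-$ denote the subcompressionbody bounded by $H$ and $H_-$, with $\boundary_+ A_- = H$, $\boundary_- A_- = H_-$, and s-discs $D_-$; let $A_+ \subset C_+$ be analogous. Then $F = H|_{D_- \cup D_+}$ sits inside $A_- \cup A_+$, straddling $H$, and I would show that $F$ separates $A_- \cup A_+$ into a region $T_-$ adjacent to $H_-$ and a region $T_+$ adjacent to $H_+$. Each component of $T_\pm$ is then to be identified as a compressionbody with a single component of $H_\pm$ as $\boundary_+$ and some collection of $F$-components as $\boundary_-$; the required s-discs in a $T_-$-component come from $D_+$ extended through the collar (using that the curves $\boundary D_+$ are disjoint from $\boundary D_-$ and so survive on $H_-$), and symmetrically for $T_+$.

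For orientations, the normals of $H_-$, $H_+$, and $F$ are inherited from $H$, giving each new compressionbody a consistent cobordism structure. For acyclicity, the dual digraph of $\mc{J}$ is obtained from that of $\mc{H}$ by splitting each vertex $C_\pm$ into several vertices (whose external incidences to old thin edges are inherited from $C_\pm$) and replacing the single edge for $H$ with a linear chain of new edges through intermediate $T_\pm$-vertices: components of $H_-$ (directed from $C_-|_{D_-}$-vertices to $T_-$-vertices), then components of $F$ (from $T_-$ to $T_+$), then components of $H_+$ (from $T_+$ to $C_+|_{D_+}$-vertices). Any coherent cycle in the new digraph must traverse this chain in the forward direction and return via old edges, which would contract back to a coherent cycle in $\mc{H}$'s digraph, contradicting acyclicity. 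The main obstacle is the second step: verifying that each component of $T_\pm$ is a compressionbody with the correct $\boundary_\pm$ designation (in particular, that no component has two $H_\pm$-components as boundary), and handling the case (per Figure \ref{semi-discs for negative boundary} and Lemma \ref{thin s-discs}) in which $D_\pm$ contains semi-compressing discs, so that the $T_\pm$-components become punctured product compressionbodies rather than genuine products.
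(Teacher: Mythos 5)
The paper gives no argument for Theorem \ref{untel} at all---it is left to the reader with pointers to \cite{ST}, \cite{HS}, and \cite{TT1}---and your outline is exactly the standard proof those sources supply: decompose the affected region into $C_-|_{D_-}$ (via Lemma \ref{subcompbodies}(3)), the two layers $T_\pm$ between $F$ and $H_\pm$ (each component a compressionbody obtained from $(\text{components of }F)\times I$ by attaching 1-handles dual to the discs of $D_\mp$, so that each component has exactly one $H_\pm$-component as positive boundary), and $C_+|_{D_+}$, then check that the dual digraph of $\mc{J}$ is the digraph of $\mc{H}$ with the vertices $C_\pm$ split and the edge $H$ replaced by coherent chains, so any coherent cycle contracts to one in $\mc{H}$'s digraph. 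Your sketch is correct as it stands (the only slight imprecision is the closing parenthetical: when $D_\mp$ contains compressing discs the $T_\pm$-components are general compressionbodies, not (punctured) products, but that is exactly what your main identification already asserts), so filling in the routine verifications would complete the proof.
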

\begin{proof}
We leave the proof to the reader; see \cites{ST, HS, TT1} for example. 
\end{proof}

On the other hand, suppose $\mc{H}$ contains a thick surface $H$ and a thin surface $F$ such that $H \cup F$ lie in the boundary of a product compressionbody component $C$ of $M\setminus \mc{H}$. We say that $\mc{J} = \mc{H}\setminus (H \cup F)$ is obtained by \defn{consolidating} $\mc{H}$.  

\begin{lemma}[{Taylor-Tomova \cite{TT1}}]\label{consol preserve acyclic}
Suppose $\mc{H}$ is an oriented generalized Heegaard surface and that $\mc{J}$ is obtained from $\mc{H}$ by a consolidation. Then $\mc{J}$ is an oriented generalized Heegaard surface. Furthermore, if $\mc{H}$ is acyclic, so is $\mc{J}$.
\end{lemma}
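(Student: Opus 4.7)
The plan is to verify (i) that removing the thick surface $H$ and the thin surface $F$ merges $C$ with its two neighbors across $H$ and $F$ into a single compressionbody, (ii) that the inherited transverse orientations give $\mc{J}$ the structure of an oriented GHS, and (iii) that acyclicity of the dual digraph is preserved. Let $C_1$ denote the compressionbody of $M \setminus \mc{H}$ on the side of $H$ opposite $C$, with $H \subset \boundary_+ C_1$, and let $C_2$ denote the compressionbody on the side of $F$ opposite $C$, with $F \subset \boundary_- C_2$. After consolidation, the region $C^* = C_1 \cup_H C \cup_F C_2$ of $M \setminus \mc{J}$ is homeomorphic to $C_1 \cup_{H \sim F} C_2$ since $C \cong F \times I$ acts as a collar. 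To see $C^*$ is a compressionbody, I would start with a complete collection $\Delta_2$ of s-discs for $C_2$, chosen disjoint from $F$. The piece of $C_2|_{\Delta_2}$ containing $F$ is a product compressionbody, which combined with $C$ becomes a collar on $\boundary_+ C_1$; regluing to $C_1$ preserves its compressionbody type. Adjoining a complete collection $\Delta_1$ for $C_1$, isotoped across this collar to have boundary on $\boundary_+ C^* = \boundary_+ C_2$, then yields a complete s-disc collection for $C^*$ whose complement is a union of trivial compressionbodies.

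For the orientation, because $C$ is a product compressionbody oriented as a cobordism between $F$ and $H$, the transverse orientations of $H$ and $F$ are compatibly directed through $C$. Say the dual digraph locally reads $C_1 \to C \to C_2$ (with $H$ incoming and $F$ outgoing at $C$); the opposite case is symmetric. Then at $C_1$ the thick edge $H$ is outgoing, so all thin edges at $C_1$ are incoming; likewise at $C_2$ the thick edge $\boundary_+ C_2$ is outgoing and all thin edges are incoming. Consequently $C^*$ inherits the same type with outgoing thick edge $\boundary_+ C_2$ and incoming thin edges from $\boundary_- C_1 \cup (\boundary_- C_2 \setminus F)$, so the inherited transverse orientations make $C^*$ a cobordism from $\boundary_- C^*$ to $\boundary_+ C^*$. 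The remaining conditions of the GHS definition are pure bookkeeping.

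For acyclicity, the dual digraph of $\mc{J}$ is obtained from that of $\mc{H}$ by contracting the edges $H$ and $F$ and merging the three vertices $C_1, C, C_2$ into the single vertex $C^*$. If $\mc{H}$ is acyclic then $C_1 \neq C_2$, for otherwise $C_1 \to C \to C_1$ is already a coherent $2$-cycle in $\mc{H}$. Suppose, toward contradiction, that $\mc{J}$ contains a coherent cycle. Any such cycle passes through $C^*$ and lifts to a coherent path in $\mc{H}$ of the form $X_a \to v_1 \to \cdots \to v_k \to X_b$ with $X_a, X_b \in \{C_1, C_2\}$ and using only edges distinct from $H$ and $F$. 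In the orientation case $C_1 \to C \to C_2$, the cycle's out-edge at $C^*$ is the unique thick edge $\boundary_+ C_2$, forcing $X_a = C_2$. If $X_b = C_2$, the lifted path is already a coherent cycle in $\mc{H}$, a contradiction; if $X_b = C_1$, then combining the lifted path with the internal coherent path $C_1 \to C \to C_2$ yields the coherent cycle $C_2 \to v_1 \to \cdots \to v_k \to C_1 \to C \to C_2$ in $\mc{H}$, again a contradiction. Hence $\mc{J}$ is acyclic.

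The main obstacle will be the careful verification that $C^*$ really is a compressionbody; the delicate step is organizing the s-discs for $C_1$ and $C_2$ so that they meet $\boundary_+ C^*$ coherently after being pushed through the product $C$ and the product component of $C_2|_{\Delta_2}$ adjacent to $F$. Once the correct orientation of $C$ in the dual digraph is identified, the preservation of acyclicity is essentially combinatorial and follows from the edge-contraction analysis above.
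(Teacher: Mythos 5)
The paper gives no argument for this lemma---it is imported wholesale from Taylor--Tomova \cite{TT1}---so there is no in-text proof to measure yours against; judged on its own terms, your proof is the standard argument and is correct in substance. The decomposition $C^* = C_1 \cup_H C \cup_F C_2$, the transport of a complete s-disc system for $C_1$ across the product pieces so that, together with one for $C_2$, it cuts $C^*$ into trivial compressionbodies, the orientation bookkeeping at the merged vertex, and the acyclicity argument (a putative coherent cycle of $\mc{J}$ must leave $C^*$ along its unique outgoing edge, the thick edge $\boundary_+ C_2$, so its lift starts at $C_2$; if the lift returns at $C_2$ it is already a coherent cycle of $\mc{H}$, and if it returns at $C_1$ you close it up with $C_1 \to C \to C_2$) are all sound, and are in the same spirit as the way the paper builds compressionbodies elsewhere (Lemma \ref{amalglem}, Lemma \ref{monopod lemma}).

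Two points should be tightened. First, after pushing $\Delta_1$ across the collar, $\boundary \Delta_1$ lies on the top of the product component of $C_2|_{\Delta_2}$ containing $F$, which consists of pieces of $\boundary_+ C_2$ \emph{and} scars of $\Delta_2$; you must further isotope $\boundary \Delta_1$ off those scars so that $\Delta_1 \cup \Delta_2$ is a disjoint, properly embedded family with boundary on $\boundary_+ C^*$, and you should check that the transported discs are still s-discs (compressing discs remain compressing because that product component is $\pi_1$-injective in $C_2$, since $\boundary_- C_2$ is incompressible by Lemma \ref{subcompbodies}(1); semi-compressing discs still cut off punctured balls whose punctures are components of $\boundary_- C^*$). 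Alternatively, the dual description avoids this bookkeeping entirely: $C_2$ is obtained from products and balls by attaching 1-handles, the product piece on $F$ becomes a collar on $\boundary_+ C_1$, so $C^*$ is again products and balls with 1-handles attached, hence a compressionbody. Second, your proof of the first assertion tacitly assumes $C_1 \neq C_2$, which you justify only under acyclicity, whereas the first sentence of the lemma does not assume acyclicity. If $C_1 = C_2$ (which forces the coherent $2$-cycle $C_1 \to C \to C_1$), the consolidated region $C \cup C_1$ is a possibly punctured surface bundle over $S^1$ and is not a compressionbody; for instance, the splitting of $F \times S^1$ into two product compressionbodies with one thick and one thin copy of $F$ consolidates to the empty surface. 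So you should either state explicitly the hypothesis that the compressionbodies adjacent to $C$ along $H$ and $F$ are distinct (automatic in the acyclic setting in which consolidation is used in thinning sequences) or flag the degenerate case; this is as much a caveat about the statement as about your argument.
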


We identify three \defn{thinning moves} that may be able to be performed on an oriented generalized Heegaard surface $\mc{H}$. 

\begin{enumerate}
\item consolidation
\item untelescoping using a weak reducing pair
\item untelescoping using an (s/2)-weak reducing pair $D_-$, $D_+$, such that $D_\pm$ is a compressing disc and a positive genus component of $H_\mp = H|_{D_\mp}$ can be consolidated with a component of $\mc{H}^-$; we also perform that consolidation.
\end{enumerate}
We call (2) a \defn{type I untelescoping} and (3) a \defn{type II untelescoping and consolidation}

A \defn{thinning sequence} is a sequence of consolidations, type I untelescopings, and type II untelescoping and consolidations.  If $\mc{J}$ is obtained from $\mc{H}$ by a (possibly empty) thinning sequence, we say that $\mc{H}$ \defn{thins} to $\mc{J}$. If none of the three thinning moves is possible for $\mc{H}$, then we say $\mc{H}$ is \defn{locally thin}. 

\begin{remark}
In many papers, including \cites{HS, ST, TT1}, additional moves are also allowed, corresponding to the destabilization of Heegaard surfaces. We do not need to make use of those moves. 
\end{remark}

\begin{theorem}\label{locally thin}
Suppose that $\mc{H}$ is a generalized Heegaard surface for a 3-manifold $M$. Then every thinning sequence beginning with $\mc{H}$ must terminate. Consequently, there exists a locally thin $\mc{J}$ such that $\mc{H}$ thins to $\mc{J}$. If $\mc{H}$ is acyclic, so is $\mc{J}$.
\end{theorem}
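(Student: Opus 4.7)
The plan is to introduce a complexity $c(\mc{H})$ that lives in a well-founded order and strictly decreases under every thinning move. Specifically, I would take $c(\mc{H})$ to be the finite multiset of genera of the components of $\mc{H}^+$, compared via the Dershowitz--Manna multiset order over $(\N,<)$; this order is well-founded, so proving that each thinning move strictly decreases $c$ forces every thinning sequence to terminate and hence produces a locally thin $\mc{J}$. Acyclicity will then follow for free since Theorem~\ref{untel} and Lemma~\ref{consol preserve acyclic} together handle all three moves.

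The verification of the decrease splits into three cases. A consolidation deletes a single thick component together with a parallel thin component, so one entry disappears from $c$, giving a strict decrease. For a type~I untelescoping of a thick surface $H$ of genus $g$, both $D_-$ and $D_+$ contain at least one compressing disc. Compressing along a compressing disc strictly drops the genus on the component carrying its boundary (a non-separating disc lowers the genus by one, and a separating essential curve on a closed surface splits it into two subsurfaces of positive genus summing to $g$). Compressing along any accompanying semi-compressing disc only splits off a sphere and leaves a component of the same genus, which is then further reduced by the compressing disc. Hence every component of $H_-$ and $H_+$ has genus strictly less than $g$, so the entry $g$ in $c$ is replaced by strictly smaller entries.

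For a type~II untelescoping and consolidation, assume without loss of generality that $D_+$ contains a compressing disc, so every component of $H_+$ has genus $<g$ by the argument just given. On the other side, if $D_-$ also contains a compressing disc then every component of $H_-$ has genus $<g$; otherwise $D_-$ consists only of semi-compressing discs and $H_-$ has exactly one positive-genus component, of genus $g$, accompanied by spheres. The definition of the type~II move explicitly requires consolidating a positive-genus component of $H_\mp$ with a thin surface, which removes it from $\mc{H}^+$. In every sub-case the new thick components all have genus $<g$, so $c$ strictly decreases. Well-foundedness of the multiset order then gives termination, and thus the locally thin $\mc{J}$; acyclicity propagates through the entire sequence by Theorem~\ref{untel} and Lemma~\ref{consol preserve acyclic}, since a type~II move is a composition of an untelescoping and a consolidation. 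The most delicate step is the type~II case, where the potentially-surviving genus-$g$ component on the non-compressing side must be eliminated before it can spoil the complexity argument; this is precisely what the consolidation clause built into the type~II move is arranged to accomplish.
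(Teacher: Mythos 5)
Your proposal is correct and takes essentially the same route as the paper: your multiset of genera of thick components is exactly the Scharlemann--Thompson complexity $c(\mc{H})$ (since $1-\chi/2$ is the genus of a closed orientable component), the Dershowitz--Manna order coincides with the paper's well-ordering of non-increasing sequences, and your three-case analysis—including the key observation that the surviving genus-$g$ component on the semi-compressing side is eliminated by the consolidation built into a type II move—matches the paper's proof of Theorem \ref{locally thin}. Acyclicity is handled identically, via Theorem \ref{untel} and Lemma \ref{consol preserve acyclic}.
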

\begin{proof}
We have already observed in Lemmas \ref{untel} and \ref{consol preserve acyclic} that if an oriented $\mc{H}$ thins to $\mc{J}$, then $\mc{J}$ is oriented  and if $\mc{H}$ is acyclic so is $\mc{J}$. We need only show that every thinning sequence terminates. Scharlemann and Thompson define the \defn{width} of a generalized Heegaard splitting. In our context, this is a complexity for a generalized Heegaard surface. Adapting their definition slightly, we let $c(\mc{H})$ be the sequence whose entries are $c(H) = (1 -\chi(H)/2)$ for each component $H \subset \mc{H}^+$, arranged in non-increasing order. We note that each entry of the sequence $c(\mc{H})$ is a non-negative integer that is 0 if and only if the corresponding surface is a sphere.  If $D$ is a collection of pairwise disjoint s-discs for $H$, then each component $H' \subset H|_D$ has $c(H') \leq c(H)$. If $D$ contains at least one compressing disc, then the inequality is strict for every component $H'$. If $D$ contains only s-discs, then there is one component $H'$ of the same complexity as $H$ and all others are spheres and have complexity zero.

If $\alpha, \beta$ are two such complexity sequences, we say that $\alpha < \beta$ if and only if there exists $k \geq 0$ such that the first $k$ terms of $\alpha$ and $\beta$ coincide and either $\alpha$ does not have a $(k+1)$st term, but $\beta$ does (possibly it is 0) or the $(k+1)$st term of $\beta$ is strictly greater than the $(k+1)$st term of $\alpha$. Observe that complexities are well-ordered. 

Consolidation removes a term from a complexity sequence and so decreases it. A type I untelescoping replaces an entry in a complexity sequence with some number (at least two) of other terms, each strictly smaller than the original. Thus, it also decreases complexity. Finally, consider a type II untelescoping and consolidation. Assume the untelescoping uses an (s/2)-weak reducing pair $D_-$, $D_+$ such that $D_-$ contains only semi-compressing discs and $D_+$ contains at least one compressing disc. Let $H \subset \mc{H}^+$ contain $\boundary D_- \cup \boundary D_+$. Since $H$ admits a compressing disc, $c(H) > 0$. Then if $H'$ is a component of $H_+ = H|_{D_+}$, we have $c(H') < c(H)$. If $H'$ is a component of $H_- = H|_{D_-}$, either $c(H') = c(H)$ or $c(H') = 0$. In the former case, we immediately consolidate the component $H'$, and so the type II untelescoping and consolidation also strictly decreases complexity.
\end{proof}

\begin{lemma}\label{thin props}
Suppose that $\mc{H}$ is a locally thin oriented generalized Heegaard surface for $M$. Then the following hold:
\begin{enumerate}
\item Each component of $\mc{H}^+$ is strongly irreducible;
\item No component of $M \setminus \mc{H}$ is a product compressionbody between components of $\mc{H}^-$ and $\mc{H}^+$;
\item If a component $C \subset M\setminus \mc{H}$ is a punctured product compressionbody with $\boundary_+ C$ of positive genus and the positive genus component of $\boundary_- C$ in $\mc{H}^-$, then the compressionbody on the other side of $\boundary_+ C$ is a punctured product compressionbody;
\item Each component of $\mc{H}^-$ is incompressible in $M$.
\end{enumerate}
\end{lemma}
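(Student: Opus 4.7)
The unifying strategy is to assume that a given conclusion fails and then exhibit a thinning move, contradicting local thinness of $\mc{H}$. I would treat the four parts in the order (2), (1), (3), (4), with a type II untelescoping and consolidation built from semi-compressing discs at sphere punctures serving as the main engine for (1) and (3).

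Part (2) is immediate from the definition of consolidation: if some component $C \subset M\setminus \mc{H}$ is a product compressionbody with $\boundary_+ C \subset \mc{H}^+$ and $\boundary_- C \subset \mc{H}^-$, then consolidation across $C$ is a valid thinning move on $\mc{H}$.

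For part (1), suppose $H \subset \mc{H}^+$ is weakly reducible, witnessed by an s-weak reducing pair $(D_-, D_+)$ in which, without loss of generality, $D_-$ is a compressing disc. If $D_+$ is also a compressing disc, $(D_-, D_+)$ is a weak reducing pair and a type I untelescoping gives the contradiction. Otherwise $D_+$ is semi-compressing; let $C_+$ be the compressionbody containing $D_+$. If $C_+$ admits a compressing disc for $H$ that can be chosen disjoint from $D_-$, type I again applies. The remaining case is that $C_+$ has no compressing disc for $H$ at all. By Lemma \ref{subcompbodies}(4) $C_+$ is then a punctured trivial compressionbody; since $H$ is compressible via $D_-$ and hence has positive genus, $C_+$ must be a punctured product compressionbody with a positive-genus component $F_0$ of $\boundary_- C_+$. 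By (2), $C_+$ has at least one puncture (else it would be an unpunctured product, contradicting (2)). Lemma \ref{thin s-discs} provides, at each puncture, a semi-compressing disc for $H$ in $C_+$, and one can select a pairwise disjoint collection $\Delta_+$ of such discs (one per puncture) that lies in $C_+$ and is therefore disjoint from $D_- \subset C_-$. Then $(D_-, \Delta_+)$ is an s-weak reducing pair with $D_-$ compressing; compressing $H$ by $\Delta_+$ yields a positive-genus main component $H_{\mathrm{main}}$ together with some spheres, and $H_{\mathrm{main}}$ cobounds an unpunctured product compressionbody with $F_0 \in \mc{H}^-$. The type II untelescoping and consolidation is thus available --- a contradiction.

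Part (3) uses the same engine. If $C'$ on the opposite side of $H = \boundary_+ C$ is not a punctured product compressionbody, then by Lemma \ref{subcompbodies}(4) it has a compressing disc $D_-$ for $H$; by (2), $C$ has at least one puncture. As in (1), assemble a disjoint family $\Delta_+ \subset C$ of semi-compressing discs, one per puncture, and form the s-weak reducing pair $(D_-, \Delta_+)$. The positive-genus main component of $H|_{\Delta_+}$ cobounds a product compressionbody with $F_0$, so a type II untelescoping and consolidation is available --- a contradiction. Part (4) is a standard innermost disc argument: a hypothetical compressing disc $D$ for $F \in \mc{H}^-$, after minimizing its intersections with $\mc{H}$ (using the strong irreducibility from (1) to resolve intersections with thick surfaces and incompressibility of the other thin surfaces as they become available), can be isotoped to lie inside a single compressionbody $C$ with $F \subset \boundary_- C$; then $D$ is a compressing disc for $\boundary_- C$ in $C$, contradicting Lemma \ref{subcompbodies}(1).

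The main obstacle I anticipate is the case analysis in part (1): justifying the trichotomy among the weak reducing pair case, the ``compressing disc in $C_+$ disjoint from $D_-$'' case, and the ``$C_+$ has no compressing disc'' case, and confirming that in the last of these the positive-genus $F_0 \subset \boundary_- C_+$ necessarily lies in $\mc{H}^-$ rather than $\boundary M$ so that consolidation truly applies. A related subtlety in (3) is verifying that compressing $H$ by $\Delta_+$ indeed produces an \emph{unpunctured} product compressionbody between $H_{\mathrm{main}}$ and $F_0$, so that the consolidation step is legitimate.
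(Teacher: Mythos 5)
Your parts (2) and (3) follow the paper's own route (the paper proves (3) exactly by combining a compressing disc on the far side with semi-compressing discs at the punctures to set up a type II untelescoping and consolidation), so those are fine, apart from the small misattribution that the per-puncture semi-compressing discs for $\boundary_+ C$ come from the compressionbody structure of a punctured product (e.g.\ Lemma \ref{subcompbodies}) rather than from Lemma \ref{thin s-discs}, which concerns semi-compressing discs for $\boundary_- C$. The real problems are in (4) and in the case of (1) you yourself flag. For (4), the step ``$D$ can be isotoped to lie inside a single compressionbody $C$ with $F \subset \boundary_- C$'' is not just unjustified, it is unachievable: since $\boundary_- C$ is incompressible in every compressionbody (Lemma \ref{subcompbodies}(1)), a compressing disc for a thin surface $F$ must meet the thick surface $H$ of the component of $M \setminus \mc{H}^-$ containing it in at least one curve essential in $H$, and strong irreducibility does not remove such curves --- Lemma \ref{SISHL} only reduces $D \cap H$ to one essential curve plus inessential ones. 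The paper's argument keeps that essential curve: after surgering away the inessential intersections, $D \setminus H$ is a vertical annulus on one side of $H$ and a compressing disc on the other; strong irreducibility forces the annulus side to be a punctured product compressionbody whose positive-genus negative boundary is $F \in \mc{H}^-$, and then part (3) makes the opposite side a punctured product as well, which cannot contain the compressing disc $D \setminus H$. So (1) and (3) enter as a configuration analysis at a single essential intersection curve, not as tools for pushing $D$ off the thick surfaces; your version of (4) does not go through as written.

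For (1), the obstacle you flag is genuine and cannot be closed by your strategy: if the punctured product $C_+$ has its positive-genus negative boundary $F_0$ on $\boundary M$ rather than in $\mc{H}^-$, no type II move exists (consolidation is only defined against a thin surface), and indeed one can build locally thin $\mc{H}$ in which a thick surface parallel to a positive-genus boundary component, with a puncture in the product region and a compressible compressionbody on the other side, admits an s-weak reducing pair consisting of a compressing disc and a semi-compressing disc. Thus the full ``s-disc'' form of (1) is not reachable by exhibiting a thinning move in that case. Note that the paper's own one-sentence proof of (1) only invokes untelescoping, i.e.\ it really establishes the classical statement that no thick surface admits disjoint compressing discs on opposite sides; that weaker statement is exactly what your type I case handles, and together with (3) it is all that is needed downstream (in (4) and in Lemma \ref{SISHL}). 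So you should either restrict (1) to the both-compressing case, as the paper effectively does, or add a hypothesis ruling out the $F_0 \subset \boundary M$ configuration, rather than trying to force a type II move there.
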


\begin{proof}
Each component of $\mc{H}^+$ is strongly irreducible in $M \setminus \mc{H}^-$ as otherwise we could further untelescope $\mc{H}$. 

Suppose that $C \subset M\setminus \mc{H}$ is a punctured product compressionbody with $H = \boundary_+ C$ of positive genus and $F \subset \boundary_- C \cap \mc{H}^-$ also of positive genus. $C$ cannot be a product compressionbody for then $\mc{H}$ would not be locally thin. Thus $\boundary_- C$ also contains a sphere and, therefore, a semi-compressing disc. Let $D \subset M\setminus \mc{H}$ be the compressionbody on the other side of $H$ from $C$. If $D$ contains a compressing disc for $H$, then it could be isotoped to be disjoint from any semi-compressing disc in $C$. Consequently, we can perform a type II untelescoping and consolidation move. This contradicts the assumption that $\mc{H}$ is locally thin. Thus $D$ does not contain a compressing disc for $H$. It must, therefore, be a punctured product compressionbody. 

Suppose a component $F$ of $\mc{H}^-$ is compressible in $M$ via a compressing disc $D$. Choose $F$ and $D$ so that $D$ intersects $F \subset \mc{H}^-$ minimally in its interior. An innermost disc argument shows that the interior of $D$ is disjoint from $\mc{H}^-$. Let $W$ be the component of $M\setminus \mc{H}^-$ containing $D$ and let $H$ be the component of $\mc{H}^+$ in $W$. By Lemma \ref{SISHL}, we may isotope $H$ so that $H \cap D$ consists of a single curve that is essential in $H$ and, possibly, a collection of inessential curves. Surgery on $D$ removes those inessential curves, so we may arrange that $D \cap H$ is a single essential curve in $H$. Thus, on one side $C$ of $H$, $D\setminus H$ is a vertical annulus, and on the other side it is a compressing disc for $H$. If $C$ contained a compressing disc for $H$, there would be one disjoint from $D$ and we would contradict the fact that $H$ is strongly irreducible. Thus, $C$ is a punctured product compressionbody. Note that $\boundary_+ C$ has positive genus. By (2), the compressionbody on the opposite side of $H$ is also a punctured compressionbody; however, punctured compressionbodies do not admit compressing discs for their boundaries. Thus, $\mc{H}^-$ is incompressible.
\end{proof}

\begin{remark}
If $\mc{H}$ is locally thin, we cannot guarantee that each component of $\mc{H}$ is essential; there may be components that are 2-spheres bounding 3-balls or components that are $\boundary$-parallel. Such thin surfaces will not interfere in what follows. As in \cites{ST, HS, TT1}, if we allowed certain destabilization operations, we could continue thinning and ensure the thin surfaces were essential. In this paper, we don't do this because we want to be able to reconstruct (via amalgamation) a Heegaard surface $H$ from a locally thin generalized Heegaard surface $\mc{H}$ to which it thins.
\end{remark}

\begin{corollary}\label{thin spheres exist}
Assume that $M$ is a reducible manifold such that no component of $\boundary M$ is a sphere. If $\mc{H}$ is a locally thin generalized Heegaard surface for $M$, then $\mc{H}^-$ contains a collection of essential spheres $P$ such that each component of $M|_P$ is irreducible. In particular, if $M$ contains a nonseparating sphere, then so does $\mc{H}^-$.
\end{corollary}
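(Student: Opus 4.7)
The plan is to take $P$ to be the collection of all components of $\mc{H}^-$ that are essential spheres in $M$ and to verify both assertions.

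The first step is to establish that whenever $M$ is reducible (with no sphere boundary) and $\mc{H}$ is locally thin, $\mc{H}^-$ contains at least one essential sphere. I will pick an essential sphere $S \subset M$ minimizing $|S \cap \mc{H}^-|$. The incompressibility of $\mc{H}^-$ from the preceding lemma, combined with a standard innermost-disc argument (replacing an innermost subdisc of $S$ cut off by $\mc{H}^-$ with the disc in $\mc{H}^-$ it cobounds), forces $S \cap \mc{H}^- = \nil$. Then $S$ lies in a component $W$ of $M \setminus \mc{H}^-$, and $W$ contains a single strongly irreducible thick surface $H \subset \mc{H}^+$. By Lemma \ref{SISHL}, I may isotope $H$ so that every loop of $S \cap H$ is inessential in $H$, and any innermost loop in $S$ bounds an s-disc for $H$ inside $S$.

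If $S \cap H = \nil$, then $S$ lies in a compressionbody $C$ with $\boundary_+ C = H$, and since $S$ is essential it cannot bound a ball in $C$; it must therefore be parallel in $C$ to a sphere component of $\boundary_- C$, which (as $\boundary M$ has no spheres) lies in $\mc{H}^-$ and is isotopic to $S$ in $M$, so it is essential. Otherwise, take an innermost $\gamma \subset S \cap H$ bounding an s-disc $D \subset S$; since $\gamma$ is inessential in $H$, $D$ is a semi-compressing disc for $H$ lying in some compressionbody $C$. If $E \subset H$ is a disc with $\boundary E = \gamma$, then $D \cup E$ is a sphere in $\overline{C}$ that cannot bound a ball in $C$ (this would violate semi-compressibility), so it must be parallel in $C$ to some $P_0 \subset \boundary_- C \cap \mc{H}^-$. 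Were $P_0$ to bound a ball in $M$, that ball (which must lie on the side of $P_0$ opposite $C$, as sphere components of $\boundary_- C$ do not bound balls in $C$) together with the parallelism region $S^2 \times I$ in $C$ would furnish a ball bounded by $D \cup E$ with interior disjoint from $H$, again contradicting semi-compressibility; so $P_0$ is essential.

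Now with $P$ the set of all essential spheres in $\mc{H}^-$: capping off each sphere of $P$ converts $\mc{H} \setminus P$ into a generalized Heegaard surface for $M|_P$, whose boundary still contains no spheres. The thick components remain strongly irreducible (any s-disc in $M|_P$ may, by an innermost argument, be pushed off the scar balls), so the only obstructions to local thinness are product compressionbodies arising from capping punctured products. A sequence of consolidations removes these without introducing any new thin components, yielding a locally thin $\tild{\mc{H}}$ for $M|_P$ with $\tild{\mc{H}}^- \subseteq \mc{H}^- \setminus P$. If some component of $M|_P$ were reducible, applying the first step to that component and its inherited locally thin surface would produce an essential sphere in $\mc{H}^- \setminus P$. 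But any component of $\mc{H}^- \setminus P$ bounds a ball in $M$ (by definition of $P$), and that ball is disjoint from $P$ (else it would contain one of the essential spheres of $P$), so it survives into $M|_P$ and the component remains inessential there. This contradiction shows each component of $M|_P$ is irreducible.

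For the final assertion, suppose $Q \subset M$ is a non-separating essential sphere. Since $M|_P$ is irreducible, $Q$ bounds a ball in $M|_P$; un-capping the scar balls inside it realizes $Q$ together with a subcollection $P' \subseteq P$ as the boundary of a ball-with-smaller-balls-removed in $M$, so $[Q] = \sum_{P_i \in P'} [P_i]$ in $H_2(M;\Z/2)$. As $[Q] \neq 0$, some $[P_i] \neq 0$, giving a non-separating sphere in $\mc{H}^-$. The main technical obstacle is the semi-compressing s-disc case in the first step (one cannot a priori make $S$ disjoint from $H$ after minimizing $|S \cap \mc{H}^-|$); the ball-doubling trick that extracts $P_0$ from $D \cup E$ is the crux of the whole argument.
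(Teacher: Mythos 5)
Your overall strategy is the same as the paper's: take $P$ to be all essential spheres of $\mc{H}^-$, reduce to showing the pieces of $M|_P$ are irreducible using incompressibility of $\mc{H}^-$, Lemma \ref{SISHL}, and the structure of spheres and semi-compressing discs in compressionbodies, and then deal with the non-separating case separately. However, several steps as written are not justified. In your first step, the sphere $D\cup E$ (and likewise $S$ itself when $S\cap H=\nil$) need not be parallel to a \emph{single} component $P_0$ of $\boundary_- C$: by Lemma \ref{subcompbodies}(5) it cuts off a ball with possibly \emph{several} open balls removed, so it may enclose several spheres of $\mc{H}^-$ at once. More seriously, your parenthetical justification that a ball bounded by an inessential $P_0$ must lie on the side of $P_0$ away from $C$ only rules out the ball lying \emph{inside} $C$; the ball could instead contain $C$ (hence $H$) entirely, in which case your construction of a ball bounded by $D\cup E$ with interior disjoint from $H$ fails. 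Both points are repairable (if every enclosed thin sphere bounds a ball disjoint from $\operatorname{int}C$ one assembles the forbidden ball for $D\cup E$; if some such ball contains $C$ then $S$ lies inside a ball and is inessential), but as written the crux of your step 1 has a hole.

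In your second step, the assertion that capping off $P$ and then consolidating yields a \emph{locally thin} surface is not justified: consolidation enlarges the compressionbody adjacent to a surviving thick surface and can in principle create a weak reduction, which is exactly why thinning sequences in the paper must be iterated rather than declared finished after consolidations. The clean fix is to skip consolidation altogether: your step 1 only uses incompressibility of the thin surfaces and strong irreducibility of the thick ones, and both properties pass from the capped surface in $M|_P$ back to $\mc{H}$ in $M$ by pushing discs off the scar balls, since a ball in $M$ with interior disjoint from a thick surface cannot contain a component of $P$ (its components are essential). This is essentially how the paper argues. Finally, in your last paragraph you treat $Q$ as a sphere in $M|_P$, i.e.\ you silently assume $Q\cap P=\nil$; an arbitrary non-separating sphere need not be disjoint from $P$, so you must first compress $Q$ along innermost discs of $P$, keeping at each stage a piece that is nontrivial in $H_2(M;\Z/2)$, before your scar-counting homology identity applies. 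With these repairs your argument goes through and coincides in substance with the paper's proof (which disposes of the non-separating case more briefly, by noting that the modifications are surgeries and every sphere in a compressionbody is separating).
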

\begin{proof}
Let $P \subset \mc{H}^-$ be the union of all the spheres in $\mc{H}^-$ that are essential in $M$. \emph{A priori} $P$ may be empty, however we will see that it is not. Let $M_0$ be a component of $M|_P$ and let $\mc{H}_0$ be those components of $\mc{H}$ contained in the interior of $M_0$. Using the fact that $\mc{H}$ cannot be consolidated and that the existence of spherical components in $\boundary_- C$ for a compressionbody $C$, does not affect which curves in $\boundary_+ C$ bound compressing discs in $C$, we see that $\mc{H}_0$ is also thin. 

Suppose that $M_0$ contains an essential sphere $Q$. Isotope it to intersect $\mc{H}_0$ minimally. An innermost disc argument shows that there is a component $M'_0$ of $M_0 \setminus \mc{H}^-_0$ that either contains $Q$ or contains a semi-compressing disc for a component of $\mc{H}^-_0$. In the latter case, by Lemma \ref{thin s-discs}, the compressionbody containing that disc must have a sphere in its negative boundary. That sphere cannot lie in $\boundary M_0$ so it belongs to $\mc{H}^-_0$. But in that case, it is not essential so it bounds a 3-ball and we can further isotope $Q$ to remove the intersection. Thus, $Q \cap \mc{H}^-_0 = \nil$. Lemma \ref{SISHL} shows that this implies some component of $\mc{H}_0^+$ is weakly reducible, a contradiction. 

Since the previous paragraph modifies the sphere $Q$ by surgery, and since every sphere in a compressionbody is separating, if $Q$ were nonseparating, we see that $P$ must also have contained a nonseparating sphere.
\end{proof}

\section{Amalgamation}\label{amalg}

Suppose that $M$ is a 3-manifold containing a transversally oriented closed surface $F \subset M$ (possibly disconnected) separating $M$ into (possibly disconnected) 3-manifolds $M_-$ and $M_+$, such that the transverse orientation on $F$ points into each component of $M_+$ and out of each component of $M_-$ and so that each component of $M_-$ and $M_+$ is incident to a component of $F$. Suppose that $H_\pm$ is the union of oriented Heegaard surfaces for $M_\pm$ such that the orientation on $H_+$ (resp. $H_-$) points away from (resp. towards) $F$. We can \defn{amalgamate} the surfaces $H_-$ and $H_+$ across $F$ as follows. See Figure \ref{Fig: amalgamate}.

\begin{figure}[ht!]
\labellist
\small\hair 2pt
\pinlabel {$H_+$} [r] at 21 306
\pinlabel {$F$} [r] at 21 260
\pinlabel {$H_-$} [r] at 21 196
\pinlabel {$U_+$} at 62 352
\pinlabel {$U_-$} at 62 283
\pinlabel {$V_+$} at 62 230
\pinlabel {$V_-$} at 62 93
\pinlabel {$A$} [r] at 597 311
\pinlabel {$A$} [r] at 753 311
\pinlabel {$A$} [r] at 860 311
\pinlabel {$H$} [r] at 524 197
\endlabellist
\includegraphics[scale=0.4]{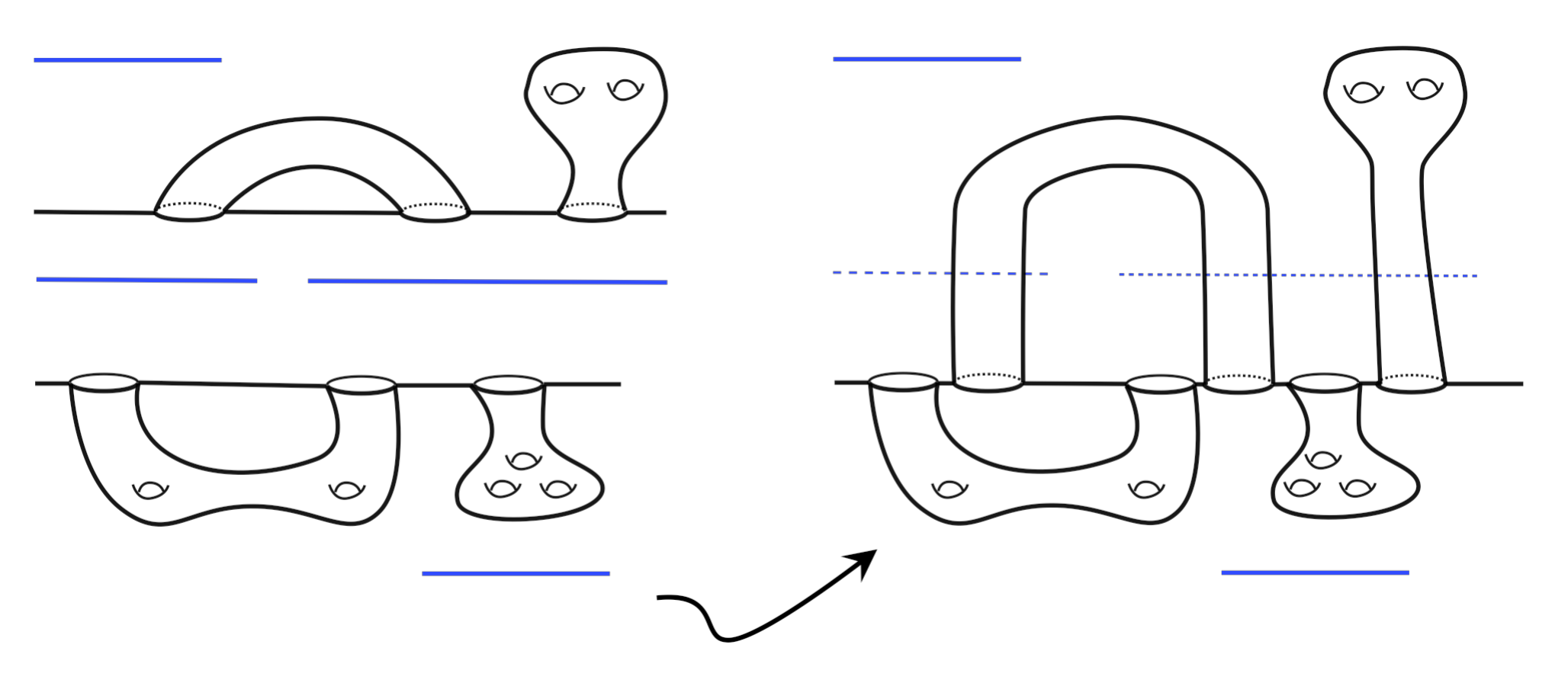}
\caption{The amalgamation of Heegaard surfaces $H_-$ and $H_+$ across a thin surface $F$.}
\label{Fig: amalgamate}
\end{figure}

For notational convenience, let $U_\pm, V_\pm$ be the union of the compressionbodies on either side of $H_\pm$ with $F = U_- \cap V_+$. Choose a complete collection of discs for each of $U_-$ and $V_+$. When we $\boundary$-reduce $U_- \cup V_+$ along those discs, we obtain product compressionbodies and 3-balls. The union of some of those product compressionbodies is $\Pi = F \times [-1,1]$ where $F \times [-1,0] \subset V_+$ and $F \times [0, 1] \subset U_-$. In both cases, consider $F$ as $F \times \{0\}$. The other product compressionbodies contain $\boundary_- V_+ \setminus F$ and $\boundary_- U_- \setminus F$. Let $\Delta_- \subset U_-$ be those discs in the complete collection that leave scars on $F \times \{1\}$. A small isotopy of the product structure on $\Pi$ ensures that the projection of the scars from $\Delta_-$ on $F \times \{1\}$ to $F \times \{-1\}$ completely misses the scars in $F \times \{-1\}$ arising from the complete collection of discs in $V_+$. Given a complete collection of discs for $V_-$, we may also ensure that the projections miss the remnants of their boundaries, as they are arcs in $F \times \{-1\}$.  Let $A \subset \Pi$ be the solid tubes corresponding to the projections. Let $X_-$ be the union of $V_-$ with $A$ and with the components of $U_-|_{\Delta_-}$ not containing $F$. Let $X_+$ be the closure of the complement of $X_-$ and let $H = X_- \cap X_+$. See Figure \ref{Fig: amalgamate}.

\begin{lemma}[{Schultens \cite{Schultens}}]\label{amalglem}
$H$ is a Heegaard surface for $M$.
\end{lemma}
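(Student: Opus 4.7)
I would verify that $X_-$ and $X_+$ are each compressionbodies with common positive boundary $H$; this is exactly the condition for $H$ to be a Heegaard surface of $M$.

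For $X_-$, the structure is transparent. Write $X_- = V_- \cup A \cup \bigcup_i N_i$ where the $N_i$ are the non-$F$ components of $U_-|_{\Delta_-}$. Each $N_i$ is a compressionbody by Lemma \ref{subcompbodies}(3), and each tube $A_j$ is a 3-ball attached as a 1-handle with one end on $\boundary_+V_- = H_-$ (at a disc in $F \times \{-1\}$) and the other on a scar of some $D \in \Delta_-$ lying in $\boundary_+N_{i(j)}$. Attaching 1-handles between the positive boundaries of a disjoint union of compressionbodies yields a compressionbody provided the result is connected, and connectedness holds here because each $N_i$ is separated from the $F$-component of $U_-|_{\Delta_-}$ by at least one disc of $\Delta_-$ and hence is joined to $V_-$ by at least one tube. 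Thus $X_-$ is a compressionbody with $\boundary_-X_- = \boundary_-V_- \cup \bigcup_i \boundary_-N_i$ and $\boundary_+X_- = H$.

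For $X_+$, I would build $X_+$ by successive attachments to $U_+$. The key geometric fact is that $\Pi \setminus \operatorname{int}(A) = F^* \times [-1,1]$, where $F^* = F \setminus \bigcup_j \operatorname{int}(D^2_j)$, is a handlebody attached to $U_+$ along the subsurface $F^* \times \{1\} \subset H_+ \cap U_F$ (with $U_F$ the $F$-component of $U_-|_{\Delta_-}$). Gluing a handlebody to a compressionbody along a subsurface of its positive boundary yields a compressionbody. Then glue on the remaining pieces of $V_+$ and $U_F$ that lie outside this product region: these are compressionbody pieces attached via the scars of the chosen complete collections of $V_+$ and $U_-$ not absorbed by the tubes. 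By the isotopy choice ensuring that the tube projections miss the $\Delta^V$ scars on $F \times \{-1\}$, each such attachment is disjoint from the tubes and preserves the compressionbody property. The resulting manifold is $X_+$, a compressionbody with $\boundary_+X_+ = H$; the equivalent spine is obtained by combining a spine $\Gamma_{U_+}$ of $U_+$ with vertical arcs $\{p_k\} \times [-1,1]$ in the product region and spines of the remaining compressionbody pieces.

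The main obstacle is the $X_+$ side, since removing the half-tubes $A \cap V_+$ and $A \cap U_-$ destroys the individual compressionbody structures of $V_+$ and $U_F$: each removal tubes $\boundary_+$ to $\boundary_-$ along a lateral tube annulus. The saving observation is that, when these pieces are reassembled with the product $F^* \times [-1,1]$ and $U_+$, the lateral tube boundaries $\boundary D^2_j \times [-1,1]$ become exactly the part of $H$ that connects the surviving pieces of $H_-$ (on the $V_+$ side) to the surviving pieces of $H_+$ (inside the $N_i$'s). Verifying this rigorously is a bookkeeping exercise relying on the isotopy choice and the product structure of $\Pi \setminus \operatorname{int}(A)$.
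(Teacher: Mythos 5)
Your treatment of $X_-$ is essentially the paper's argument: the paper compresses it into the sentence that $X_-$ is obtained by attaching 1-handles to $\boundary_+ V_-$, and your expanded version (the non-$F$ components of $U_-|_{\Delta_-}$ are compressionbodies by Lemma \ref{subcompbodies}, and the tubes $A$ are 1-handles joining positive boundaries of a disjoint union of compressionbodies, with connectedness checked) is the same idea with more detail.

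For $X_+$ you take a genuinely different route, and there is a real problem with its key step. The paper shows $X_+$ is a compressionbody by exhibiting a complete collection of s-discs: take a complete collection $\Delta_+$ for $U_+$, use the product structure on $\Pi$ to extend $\boundary \Delta_+$ vertically down to $H$, avoiding the scars on $F\times\{-1\}$ left by the discs for $V_+$, and observe that the extended $\Delta_+$ together with the complete collection for $V_+$ cuts $X_+$ into trivial pieces. You instead assemble $X_+$ from $U_+$ by first gluing on $\Pi\setminus\operatorname{int}(A)=F^*\times[-1,1]$ and justify this with the principle that gluing a handlebody to a compressionbody along a subsurface of its positive boundary yields a compressionbody. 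That principle is false in general: gluing two solid tori along an annulus that winds $p\geq 2$ times around the core of one and $q\geq 2$ times around the core of the other produces a Seifert fibered space with incompressible torus boundary, which is not a compressionbody. What actually saves your step is not the stated principle but the fact that $F^*\times[-1,1]$ is attached along its horizontal face $F^*\times\{1\}$, i.e.\ it is an external collar on a subsurface of $\boundary_+ U_+$, so the union is homeomorphic to $U_+$ with the new positive boundary corresponding to the old one. Likewise, the subsequent attachments of the remaining pieces of $V_+$ need to be organized (as in your $X_-$ argument) as attachments of compressionbodies to the current positive boundary along the scar discs on $F\times\{-1\}$ --- this is precisely where the isotopy making the tube projections miss those scars is used --- but as written you assert this and defer the verification to ``bookkeeping.'' So the architecture of your $X_+$ argument can be made to work, but the lemma you invoke must be replaced by the collar observation and the final assembly actually carried out, whereas the paper's disc-system argument for $X_+$ sidesteps these issues entirely.
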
 
\begin{proof}
We observe that $X_-$ is the result of attaching 1-handles to $\boundary_+ V_-$ and so it is a compressionbody. From our construction, we note that the complete set of discs for $U_-$ together with a complete set of discs for $V_-$ are a complete set of discs for $X_-$.

On the other hand, consider a complete set of discs $\Delta_+$ for $U_+$, with boundaries transverse to $\Delta_-$. Then $\boundary \Delta_+ \cap F \times \{1\}$ is a collection of arcs and circles. We may use the product structure on $\Pi$ to extend $\boundary \Delta_+$ so that it lies on $H$ and is disjoint from the scars on $F \times \{-1\}$ left by the complete collection of discs for $V_+$. The union of the complete collection of discs for $V_+$ with $\Delta_+$ is then a complete collection of discs for $X_+$. Consequently, $X_+$ is a compressionbody.
\end{proof}

Two observations will be useful later:
\begin{lemma}\label{amalg prod}
Suppose that $M_-$ and $M_+$  are 3-manifolds such that $F = \boundary M_1 \cap \boundary M_2$ is nonempty and oriented so that its orientation points into $M_+$ and out of $M_-$. Let $H_-$ and $H_+$ be oriented Heegaard surfaces for $M_-$ and $M_+$ respectively such that the orientation for $H_+$ points away from $F$ and the orientation for $H_-$ points toward $F$. Let $H$ be the Heegaard surface resulting from amalgamating $H_+$ and $H_-$ across $F$. Then:
\begin{enumerate}
\item If $F$ and $H_\pm$ bound a product compressionbody in $M_\pm$, then (up to isotopy) $H = H_\mp$. That is, the amalgamation is the same as consolidation.
\item If $F$ is a sphere, then $H$ is isotopic to the connected sum of $H_-$ and $H_+$ as Heegaard surfaces for the 3-manifolds obtained from $M_-$ and $M_+$ by capping off $F$ with 3-balls.
\end{enumerate}
\end{lemma}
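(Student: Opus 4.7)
The plan is to analyze the amalgamation construction directly in each case, exploiting the freedom to choose the complete collections of s-discs.

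\textbf{Part (1).} Assume without loss of generality that $V_+$ is a product compressionbody between $F$ and $H_-$, so $V_+ \cong F \times I$ with $F = F \times \{0\}$ and $H_- = F \times \{1\}$; the case of $U_-$ a product is symmetric and yields $H \sim H_-$. In the amalgamation construction, take the complete collection of s-discs for $V_+$ to be empty, so the tubes $A \subset \Pi$ attach to the scars on $F \times \{1\}$ in $U_-$ and project straight down the product structure of $V_+$ to $H_-$. Use the product structure of $V_+$ to isotope $H_-$ up to $F$, carrying $V_-$ along; after this isotopy, $V_- \cup V_+$ has become a compressionbody with $F$ as positive boundary, and the tubes have become 1-handles in $U_-$ attached at $F$. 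Together with the other components of $U_-|_{\Delta_-}$, these 1-handles reassemble $U_-$, so $X_-$ is isotopic to $V_- \cup V_+ \cup U_-$, the compressionbody on the side of $H_+$ opposite $U_+$. Hence $H = \boundary X_-$ is isotopic to $H_+$.

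\textbf{Part (2).} Assume $F$ is a sphere. Apply the Monopod Lemma (Lemma \ref{monopod lemma}) to both $U_-$ and $V_+$ so that each product compressionbody component containing $F$ has a single scar from the cutting. Then $\Delta_-$ is a single disc and the amalgamation tube $A$ is a single solid tube in $\Pi$ crossing $F$ in a single disc $D_+ = A \cap F$, so $H \cap F$ is a single simple closed curve $\gamma = \boundary D_+$. Capping off $F$ with 3-balls on both sides gives $M = M_+' \# M_-'$, and surgery of $H$ along $F$ (cutting along $\gamma$ and capping by hemispheres of $F$) produces closed surfaces $\hat{H}_\pm \subset M_\pm'$. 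Granting that $\hat{H}_\pm$ is isotopic to $H_\pm$ in $M_\pm'$, since $H \cap F$ is a single simple closed curve, $H$ is by construction the connected sum $\hat{H}_+ \# \hat{H}_- \sim H_+ \# H_-$. To establish $\hat{H}_+ \sim H_+$ (the $-$ case is symmetric), decompose $H_+$ as $H_+^W \cup H_+^P$ along $\boundary \Delta_-$, where $H_+^P$ lies over the $F$-containing component $P_F^+$ of $U_-|_{\Delta_-}$. Because $F$ is a sphere and $P_F^+$ is a punctured product, $H_+^P$ is a disc; sliding this disc down the 1-handle with co-core $\Delta_-$ and through the product $P_F^+$ onto $D_+ \subset F$ provides an ambient isotopy in $M_+'$ that carries $H_+$ onto $H_+^W$ together with the annular side of $A \cap U_-$ and the cap $D_+$, which is precisely $\hat{H}_+$.

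The main obstacle is careful bookkeeping in the un-reduced manifold $M$: one must track how scars and cutting discs in the $\boundary$-reduced picture of the amalgamation correspond to the physical discs and 1-handles in $M$, so as to verify that the sliding isotopies in part (2) are realized by ambient isotopies in $M_\pm'$ rather than mere topological homeomorphisms of surfaces. In particular, fixing a product structure on $P_F^+$ compatible with the 1-handle attached to $F$ in $U_-$ is crucial so that the slide can be realized within $M_+'$ without passing through the capping ball.
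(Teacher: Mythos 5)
The paper offers no argument for this lemma at all --- the proof is explicitly left to the reader, with only a pointer to the Strong Haken argument as a hint for (2) --- so your proposal has to stand on its own. Its route is the intended one: analyze Schultens' construction directly, and for (2) use the Monopod Lemma to arrange a single scar on $F \times \{1\}$ so that $H$ meets $F$ in one circle (this is exactly the device of Corollary \ref{amalg intersect} and the Strong Haken proof), then surger $H$ along $F$ and identify the pieces with $H_\pm$. Both conclusions you reach are correct, and the surgery/connected-sum bookkeeping in (2) is sound.

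There is, however, one step in (1) that is not justified as written: after pushing $H_-$ up through the product $V_+$, the tubes together with the components of $U_-|_{\Delta_-}$ not containing $F$ do \emph{not} ``reassemble $U_-$.'' The complementary region $(F\times[0,1])\setminus A$ of the $F$-parallel product piece of $U_-$ still lies on the $X_+$ side, so at that stage $X_-$ is not $V_-\cup V_+\cup U_-$. You need a second product push --- isotope the subsurface of $H$ consisting of $(F\setminus \text{tube feet})$ together with the tube walls across $(F\times[0,1])\setminus A$ onto $F\times\{1\}\setminus(\text{scars})\subset H_+$ --- which is the same move you already made on the $V_+$ side; with it the argument closes. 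Two smaller points. First, a product compressionbody admits no s-discs at all (Lemma \ref{subcompbodies}(4)), so the ``empty collection for $V_+$'' is forced rather than chosen; this matters because part (1) is quoted in the proof of Corollary \ref{Lack thm}, so independence of the amalgamation from disc choices cannot be invoked when proving (1) --- fortunately your argument for (1) does not need it. Second, in (2) the Monopod Lemma yields handlebody pieces, not a complete collection, so you should extend it to a complete collection (Lemma \ref{subcompbodies}(2)), noting the added discs leave no scars on $F\times\{1\}$; and since you prove (2) only for this special choice, the statement for an arbitrary amalgamation requires the independence supplied by Corollary \ref{Lack thm}, which is legitimate here (no circularity) but should be cited. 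Finally, the concluding isotopy in (2) is cleanest stated as: $H_+$ and $\hat H_+$ agree off two discs with common boundary the scar circle ($H_+^P$ versus the tube wall plus cap), and these cobound a ball, namely $(F\setminus D_+)\times[0,1]$ together with a lens in the capping ball; no slide down the $1$-handle is involved, and the $-$ side is the even easier push across the solid tube $A\cap V_+$ itself.
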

\begin{proof}
We leave the proof to the reader, although see the proof of the Strong Haken Theorem below for an idea that may help with (2).
\end{proof}

\begin{definition}\label{amalg def2}
Suppose that $\mc{H}$ is an oriented generalized Heegaard surface and that $H$ is a thick surface. Let $U_-$ (resp. $U_+$) be the compressionbody of $M\setminus \mc{H}$ below (resp. above) $H$. Let $V_+$ (resp. $V_-$) be the union of some nonempty collection of compressionbodies such that each component of $V_+$ (resp. $V_-$) shares a negative boundary component with $U_-$ (resp. $U_+$). Let $J_\pm$ be the result of amalgamating $H$ with each component of $\boundary_+ V_\pm$. Let $\mc{J}$ be the either the result obtained from $\mc{H}$ by replacing $H \cup \boundary_+ V_+ \cup (\boundary_- V_+ \cap \boundary_- U_-)$ with $J_+$ or the result obtained from $\mc{H}$ by replacing $H \cup \boundary_+ V_- \cup (\boundary_- V_- \cap \boundary_- U_+)$ with $J_-$. In either case, we say that $\mc{J}$ is obtained by an \defn{amalgamation} of $\mc{H}$.
\end{definition}

\begin{lemma}
If $\mc{J}$ is obtained from an oriented generalized Heegaard surface $\mc{H}$ by an amalgamation, then $\mc{J}$ is an oriented generalized Heegaard surface. 
\end{lemma}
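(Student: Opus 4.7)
The plan is to reduce the claim to Schultens's Lemma \ref{amalglem}, applied to the submanifold of $M$ on which the amalgamation takes place. I treat only the ``$+$'' case; the ``$-$'' case is symmetric.

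First, I identify the relevant submanifold. Let $Y$ denote the union of those compressionbodies of $M \setminus \mc{H}$ lying on the side of $\partial_+ V_+$ opposite from $V_+$, and set $W = Y \cup V_+ \cup U_- \cup U_+$ with $F = \partial_- V_+ \cap \partial_- U_-$. Then $F$ separates $W$ into $W_- = Y \cup V_+$ and $W_+ = U_- \cup U_+$; on these, $\partial_+V_+$ and $H$ are Heegaard surfaces respectively. Unpacking the orientation conventions for oriented compressionbodies, the transverse orientation of $\partial_+V_+$ inherited from $\mc{H}$ points from $Y$ into $V_+$ and hence toward $F$, while the orientation of $H$ points from $U_-$ into $U_+$ and hence away from $F$. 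These are exactly the hypotheses of Lemma \ref{amalglem}, whose application shows that $J_+$ is a Heegaard surface for $W$ splitting it into compressionbodies $X_-, X_+$ with $\partial_+ X_\pm = J_+$; I orient $J_+$ so that $X_-$ lies below $J_+$.

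Second, I assemble $\mc{J}$ as an oriented generalized Heegaard surface for $M$. Outside $W$, the decomposition of $M\setminus\mc{H}$ is literally unchanged, so its compressionbodies---with their $\partial_\pm$ designations and orientations---persist in $M \setminus \mc{J}$. Inside $W$, the four compressionbodies $Y, V_+, U_-, U_+$ are replaced by the two compressionbodies $X_\pm$. Conditions (1)--(4) in the definition of a generalized Heegaard surface are then routine: $\mc{J}^+ = (\mc{H}^+ \setminus (H \cup \partial_+V_+)) \cup J_+$ and $\mc{J}^- = \mc{H}^- \setminus F$ are disjoint; $J_+ = \partial_+ X_\pm$; and every surviving thin surface $F' \subset \partial W$ lies in $\partial_- X_\pm$ for the unique $\pm$ such that the original compressionbody of $W$ adjacent to $F'$ sits inside $X_\pm$, an assignment determined by the tube-and-scar construction of $J_+$ in the proof of Lemma \ref{amalglem}.

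The main obstacle I anticipate is the orientation bookkeeping at the thin surfaces on $\partial W$. One must check that each such $F'$ retains the transverse orientation it had as a negative-boundary component of its original compressionbody, and that this orientation is consistent with $X_\pm$ being an oriented cobordism from $\partial_- X_\pm$ to $J_+$. Since the orientations outside $W$ are unchanged and the orientation of $J_+$ is fixed by the choice above, this amounts to a straightforward local check from Schultens's tube construction, confirming that $X_-$ and $X_+$ are oriented cobordisms in the required directions.
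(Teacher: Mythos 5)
Your proposal is correct and takes essentially the same route as the paper: the submanifold $W = Y \cup V_+ \cup U_- \cup U_+$ you assemble is exactly the component $M_0$ of $M$ cut along $\mc{H}^- \setminus F$ used there, and both arguments reduce to Lemma \ref{amalglem} after verifying that the inherited transverse orientations satisfy its hypotheses, then reglue and note the orientation on $J_+$ is consistent. The only cosmetic difference is that the paper reduces to the case of a single component of $V_+$ and iterates, whereas you apply the lemma once, using its allowance for disconnected $M_-$ and $F$.
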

\begin{proof}
Use the notation of Definition \ref{amalg def2}. Without loss of generality, consider the case when we amalgamate $H$ with $\boundary_+ V_+$. We may also consider just the case when $V_+$ consists of a single compressionbody; if it consists of more, just iterate the following argument.

Let $F = \boundary_- V_+ \cap \boundary_- U_-$. Let $S = \mc{H}^- \setminus F$. One component $M_0 \subset M \setminus S$ contains $F \cup \boundary_+ V_+ \cup H$ and $F$ separates $M_0$ into two submanifolds. One contains $H$ as a Heegaard surface and the other contains $\boundary_+ V_+$ as a Heegaard surface. By Lemma \ref{amalglem}, we may amalgamate $H$ and $\boundary_+ V_+$ across $F$ to obtain $J_+$, which is a Heegaard surface for $M_0$. The surface $J_+$ inherits an orientation from $\boundary_+ V_+$ that is consistent with the orientation on $H$. Regluing $M_0$ to the other components of $M \setminus S$, we see that $\mc{J}$ is an oriented generalized Heegaard surface for $M$.
\end{proof}

Here is the corollary that is key to our proof of Haken's Lemma.
\begin{corollary}\label{amalg intersect}
Suppose that $H_\pm$ are thick surfaces amalgamated across thin surfaces $F$ to obtain a thick surface $H$. Suppose the amalgamation uses a complete set of discs $\Delta_-$ such that after $\boundary$-reducing $U_-$ using $\Delta_-$ each component of $F \times \{1\} \subset U_-$ contains a single scar. Then for each component $F_0 \subset F$, the thick surface $H$ intersects $F_0$ in a single simple closed curve.
\end{corollary}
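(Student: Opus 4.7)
My plan is to unwind the amalgamation construction used to define $H$ and directly compute $H \cap F_0$ by tracking which pieces of $X_-$ actually meet $F$. Recall that $H = \boundary X_-$, and $X_-$ is the union of three explicit pieces: $V_-$, the solid tubes $A$ embedded vertically in $\Pi = F \times [-1,1]$, and the components of $U_-|_{\Delta_-}$ that do not contain $F$.

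First I would check that of these three pieces, only $A$ meets $F = F \times \{0\}$. The piece $V_-$ lies below $H_-$ and is disjoint from $\Pi$, hence from $F$; the components of $U_-|_{\Delta_-}$ not containing $F$ are product neighborhoods of components of $\boundary_- U_- \setminus F$ after $\boundary$-reduction, and so are likewise disjoint from $F$. Hence $X_- \cap F = A \cap F$.

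Next I would use the explicit description of $A$ as a disjoint union of vertical solid cylinders in $\Pi$, one per scar of $\Delta_-$ on $F \times \{1\}$, each cylinder sitting inside $F_0 \times [-1,1]$ for a single component $F_0 \subset F$ (since the tubes are vertical). Each cylinder meets $F = F \times \{0\}$ in a single transverse disc, so $X_- \cap F$ is a disjoint union of discs in $F$, one per scar of $\Delta_-$ on $F \times \{1\}$, located on the component of $F$ matching the location of the scar. Taking boundaries in $F$, we conclude that $H \cap F$ is a disjoint union of simple closed curves in $F$, one per scar.

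The hypothesis then closes the argument: each component $F_0 \times \{1\}$ contains exactly one scar from $\Delta_-$, so exactly one tube of $A$ passes through each component $F_0$ of $F$, and hence $H \cap F_0$ is a single simple closed curve. I do not expect any real obstacle here; the corollary is essentially a bookkeeping consequence of the amalgamation construction, once one observes that $H \cap F$ is determined entirely by the cross-sections of the tubes $A$ at $F \times \{0\}$.
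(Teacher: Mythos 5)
Your argument is correct and is essentially the paper's own proof, just written out in more detail: the paper likewise observes that $H \cap F$ lies on the frontiers of the vertical tubes $A$, so the single-scar hypothesis gives exactly one tube, hence one simple closed curve, per component of $F$. (One minor imprecision that does not affect the argument: the components of $U_-|_{\Delta_-}$ not containing $F$ need not be product pieces---they may be handlebodies---but all you need is that they are disjoint from $F$, which holds since $F \subset \boundary_- U_-$.)
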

\begin{proof}
The intersection between $H$ and $F$ occurs on the frontiers of the tubes $A_-$. By our choice of $\Delta_-$, each component of $F$ intersects a single tube of $A_-$ in a single closed curve.
\end{proof}

We now turn to the relationship between acyclicity and amalgamation.

\begin{definition}
A \defn{height function} on an oriented generalized Heegaard surface $\mc{H}$ is a function $f \co \mc{H} \to \N$ which is constant on each component of $\mc{H}$ and satisfies:
\begin{itemize}
\item for every component $S \subset \mc{H}$, $f(S)$ is odd if and only if $S \subset \mc{H}^+$;
\item if $C$ is a compressionbody of $M\setminus \mc{H}$ with $\boundary_- C \cap \mc{H}^- \neq \nil$ such that the transverse orientation on $\boundary_+ C$ points out of (resp. into) $C$, then, if $F$ is a component of $\boundary_- C$ we have $f(F) < f(\boundary_+ C)$ (resp. $f(F) > f(\boundary_+ C)$).
\end{itemize}
\end{definition}

\begin{lemma}\label{height funct lem}
An oriented generalized Heegaard surface admits a height function if and only if it is acyclic.
\end{lemma}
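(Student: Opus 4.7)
The plan is to view a height function as a topological sort of the edges (i.e., surface components) of the dual digraph, with the added parity constraint that thick edges receive odd values and thin edges receive even values. The bridge between the two directions is the following observation, which I would establish at the outset by a short case analysis: at every vertex $C$ of the dual digraph that has at least one thin edge, the height function conditions force every coherent length-two path $S_1 \to C \to S_2$ through $C$ to satisfy $f(S_1) < f(S_2)$. Indeed, by the description of the dual digraph, either the unique thick edge $\boundary_+ C$ is outgoing with all thin edges incoming, in which case any coherent traversal enters on some thin $F \subset \boundary_-C \cap \mc{H}^-$ and exits on $\boundary_+ C$, and by definition $f(F) < f(\boundary_+ C)$; or the thick edge is incoming with all thin edges outgoing, and any coherent traversal enters on $\boundary_+ C$ and exits on some thin $F$, and by definition $f(\boundary_+ C) < f(F)$. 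In both cases $f(\text{incoming}) < f(\text{outgoing})$.

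For the forward direction (existence of $f$ implies acyclicity) I would argue by contradiction. A coherent cycle $e_1 \to e_2 \to \cdots \to e_n \to e_1$ can only pass through vertices at which it both enters and exits, and any such vertex must have at least two incident edges in the dual digraph, hence in particular at least one thin edge (since there is only one thick edge per vertex). Applying the observation above at each such vertex yields $f(e_1) < f(e_2) < \cdots < f(e_n) < f(e_1)$, which is absurd.

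For the reverse direction (acyclicity implies existence of $f$) the plan is to do a topological sort. Define a relation $\prec$ on the edges of the dual digraph by declaring $S_1 \prec S_2$ whenever some vertex has $S_1$ as an incoming edge and $S_2$ as an outgoing edge. By the same case analysis, these are exactly the pairs for which the height function conditions force $f(S_1) < f(S_2)$. Any cycle $S_1 \prec S_2 \prec \cdots \prec S_n \prec S_1$ in the transitive closure $\prec^*$ assembles into a coherent closed walk in the dual digraph; extracting a simple sub-loop gives a coherent cycle, contradicting acyclicity. Hence $\prec^*$ is a strict partial order, which I extend to a total order $S_1 < S_2 < \cdots < S_N$ on the edges. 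Then setting $f(S_i) = 2i$ if $S_i$ is thin and $f(S_i) = 2i+1$ if $S_i$ is thick produces a function with the correct parity, and whenever $S_i \prec S_j$ we have $i < j$ and therefore $f(S_i) \leq 2i+1 < 2i+2 \leq 2j \leq f(S_j)$, so every inequality required by the definition holds.

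The main obstacle is nothing more than the bookkeeping in the case analysis at each vertex: verifying that the two directions of the thick edge precisely correspond to the two signs in the inequality from the definition, and that the forced inequalities between edges of the dual digraph correspond bijectively to coherent length-two traversals. Once this correspondence is firmly in hand, both implications reduce to the standard equivalence between acyclicity of a digraph and the existence of a topological sort, with the parity constraint absorbed by spacing the integer values by at least two.
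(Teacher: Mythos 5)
Your proof is correct and follows essentially the same route as the paper: both directions come down to the standard equivalence between acyclicity of the dual digraph and the existence of a topological order on its edges, with the reverse implication in each case being the chain-of-strict-inequalities contradiction around a coherent cycle. The only cosmetic difference is the construction of $f$: the paper takes the largest number of edges in a coherent path from a source ending with the given edge (which handles the thick/thin parity automatically, since coherent paths alternate thick and thin edges), whereas you take a linear extension of the forced order and space the values by two to enforce parity.
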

\begin{proof}
Suppose that $\mc{H}$ is an oriented acyclic generalized Heegaard surface. Each edge of the dual digraph $\Gamma$ corresponds to a component of $\mc{H}$. For an edge $S \subset \mc{H}$, let $f(S)$ equal the \emph{most} number of edges in a coherent path from a source of $\Gamma$ to the head of the edge $S$ (including $S$ itself in the count.) Since $\Gamma$ is acyclic and has only finitely many edges, this is well-defined and satisfies the definition of height function.

Conversely, if the dual digraph to $\mc{H}$ admits a coherent cycle, it cannot also admit a height function as we would contradict the total ordering of $\N$.
\end{proof}

\begin{definition}\label{determined}
Suppose that $\mc{H}$ is an oriented acyclic generalized Heegaard surface with a height function $f$. Suppose that $C$ is a compressionbody of $M \setminus \mc{H}$ and that $H = \boundary_+ C$. Let \[\mc{C}(C) = \{H_i :  \text{that there exists a compressionbody $C'_i \subset M \setminus \mc{H}$ with $H_i = \boundary_+ C'_i$ and $\boundary_- C'_i \cap \boundary_- C \neq \nil$.}\}\] If $C$ is below $H$ (resp. above), let $\mc{A}(C)$ be the subset of $\mc{C}(C)$ on which $f$ is maximal (resp. minimal), where the optimization is over all thick surfaces in $\mc{C}(C)$.  Suppose that $\mc{J}$ is obtained by an amalgamation of $\mc{H}$ and that there exists a compressionbody $C \subset M \setminus \mc{H}$ so that $\boundary_+ C$ is amalgamated with all thick surfaces in $\mc{A}(C)$, then we say that $\mc{J}$ is obtained by an amalgamation of $\mc{H}$ that is \defn{consistent} with the height function $f$. 
\end{definition}

\begin{lemma}
If $\mc{J}$ is obtained from $\mc{H}$ by an amalgamation consistent with a height function $f$, then $\mc{J}$ is acyclic and $f$ naturally induces a height function for $\mc{J}$.
\end{lemma}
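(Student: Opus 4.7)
The plan is to build a height function $f_{\mc{J}}$ on $\mc{J}$ inherited from $f$ and then invoke Lemma \ref{height funct lem}. Write $C$ for the compressionbody featured in the amalgamation with $H = \boundary_+ C$ (say $C$ is below $H$), let $\mc{A}(C) = \{H_i\}$, and let $V_+^{(i)}$ and $W^{(i)}$ be the two compressionbodies with positive boundary $H_i$, with $V_+^{(i)}$ the one sharing the thin surface $F_i = \boundary_- C \cap \boundary_- V_+^{(i)}$ with $C$. The amalgamation deletes $H$, each $H_i$, and each $F_i$, introduces $J_+$, and merges $U_+$, $C$, the $V_+^{(i)}$'s, and the $W^{(i)}$'s into two new compressionbodies $X_+$ (above $J_+$, Case B) and $X_-$ (below $J_+$, Case A).

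I would set $f_{\mc{J}}(S) = f(S)$ for every component $S$ of $\mc{J}$ already present in $\mc{H}$, and $f_{\mc{J}}(J_+) = f(H)$. Parity is automatic, and the height inequalities at compressionbodies of $\mc{J}$ inherited unchanged from $\mc{H}$ transfer verbatim. The only checks needed are at $X_\pm$. At $X_-$, each component $F$ of $\boundary_- X_-$ is either a surviving component of $\boundary_- C$ (so $f(F) < f(H)$ from the Case A rule at $C$) or a component of $\boundary_- W^{(i)}$; in the latter case chaining the Case A inequality at $W^{(i)}$, the Case B inequality at $V_+^{(i)}$, and the Case A inequality at $C$ yields $f(F) < f(H_i) < f(F_i) < f(H)$, as required. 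Dually, at $X_+$, each component $F$ of $\boundary_- X_+$ either lies in $\boundary_- U_+$ (immediate, giving $f(F) > f(H)$) or in $\boundary_- V_+^{(i)} \setminus F_i$ for some $H_i \in \mc{A}(C)$.

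The main obstacle is this last subcase: the a priori inequality is only $f(F) > f(H_i)$, and $H_i < H$ leaves a gap. This is exactly where the hypothesis \emph{consistent with $f$} has to bite, since $H_i$ has maximal $f$-value in $\mc{C}(C)$; I expect to close the gap by combining that maximality with strict monotonicity of $f$ along coherent paths in $\mc{H}$ (the mechanism behind Lemma \ref{height funct lem}), tracing a coherent continuation in $\mc{H}$ starting from $V_+^{(i)}$ across $F$ and showing that $f(F) \leq f(H)$ would force another thick surface in $\mc{C}(C)$ to exceed $f(H_i)$, violating maximality. If this direct calculation proves awkward, the clean fallback is to skip the explicit $f_{\mc{J}}$ and argue acyclicity directly by cycle-lifting: any coherent cycle in $\mc{J}$ that avoids $J_+$ already lies in the dual digraph of $\mc{H}$ (because $J_+$ is the only thick edge at $X_\pm$, so the cycle cannot traverse either vertex), while a coherent cycle through $J_+$ lifts to a coherent cycle in $\mc{H}$ by substituting for the $J_+$ edge a coherent path through $\{H, H_i, F_i\}$; either outcome contradicts acyclicity of $\mc{H}$. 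Once $\mc{J}$ is acyclic, Lemma \ref{height funct lem} delivers the induced height function and completes the inductive step needed for subsequent amalgamations.
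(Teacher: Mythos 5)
Your primary construction diverges from the paper's at the crucial point, and the gap you flag cannot be closed. The paper's induced height function gives the new thick surface the \emph{lower} value $f'(J_+)=\mu:=f(H_i)$ (the common value on $\mc{A}(C)$), not $f(H)$, and then re-assigns the value $\mu-1$ to the thin surfaces of $\boundary_- C$ that survive the amalgamation (their old values can exceed $\mu$, so they must be lowered); the consistency hypothesis enters exactly when checking the far-side inequality for those re-assigned surfaces, namely $f(H'')\le \mu-2$ for the adjacent thick surface $H''\in\mc{C}(C)\setminus\mc{A}(C)$. With your assignment $f_{\mc{J}}(J_+)=f(H)$, the inequality you call the main obstacle, $f(F)>f(H)$ for $F\subset \boundary_- V_+^{(i)}\setminus F_i$, is simply false in general: maximality of $f(H_i)$ constrains only the thick surfaces in $\mc{C}(C)$, while the other thin surfaces of $\boundary_- V_+^{(i)}$ are only forced to satisfy $f(F)>f(H_i)$. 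For instance, if $\boundary_- C=F_1$ is a single thin surface, $\boundary_- V_+^{(1)}=F_1\cup F$, and the compressionbody on the far side of $F$ lies below a thick surface $K$, then $f(H_1)=1$, $f(F_1)=f(F)=2$, $f(K)=f(H)=3$ is a height function, the amalgamation of $H$ with $H_1$ is consistent (here $\mc{C}(C)=\{H_1\}$), and yet $f(F)=2<3=f(H)$; there is no other thick surface in $\mc{C}(C)$ available to violate maximality, so the contradiction you hope to trace never materializes. So no argument closes that gap; the assignment itself must be changed to the paper's.

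The fallback is also not salvageable as written, because it never invokes consistency, and consistency is essential: amalgamating $\boundary_+ C$ with a non-maximal element of $\mc{C}(C)$ can genuinely create coherent cycles (one can arrange a coherent path in the untouched part of the digraph running from the far side of a thin surface in $\boundary_- V_+^{(i)}\setminus F_i$ back to a thin surface in $\boundary_- C\setminus F_i$, which closes up through $J_+$ after the amalgamation). Concretely, your lifting step fails: inside the amalgamated block the only coherent paths run $W^{(i)}\to V_+^{(i)}\to C\to U_+$ (through $H_i$, $F_i$, $H$), so a coherent cycle of $\mc{J}$ that enters $X_-$ through a surviving thin surface of $\boundary_- C$ (whose head in $\mc{H}$ is $C$) and leaves $X_+$ through a thin surface of $\boundary_- V_+^{(j)}\setminus F_j$ (whose tail in $\mc{H}$ is $V_+^{(j)}$) would require a coherent path in $\mc{H}$ from $C$ to $V_+^{(j)}$, which need not exist; the same failure occurs entering through $\boundary_- W^{(i)}$ and exiting through $\boundary_- V_+^{(j)}$ with $j\neq i$. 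Excluding precisely these cycles is what the height-function bookkeeping with the paper's choice of $f'$ accomplishes, and it is also what produces the height function on $\mc{J}$ \emph{induced by} $f$ that the later amalgamation sequence in Corollary \ref{Lack thm} needs; so the explicit construction of $f'$ is the substance of the proof rather than an optional convenience.
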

\begin{proof}
Use the notation of Definition \ref{determined}. Without loss of generality, assume that $C$ is below $H$. This implies that each $C'_i$ is above $H_i$. Let $C_i$ be the compressionbody below $H_i$ and $C'$ the compressionbody above $H$. Let $H'$ be the surface resulting from the amalgamation of $H$ and $H_i$ across components of $\boundary_- C$ . Let $U$ and $V$ be the compressionbodies above and below $H'$ respectively. Let $\mu = f(H_i)$ and note this is the same for all $i$.

Define $f' \co \mc{J} \to \N$ to be the function which is constant on components of $\mc{J}$ and is defined as follows for a component $S \subset \mc{J}$:
\begin{itemize}
\item If $S = H'$, then $f'(S) = \mu$
\item If $S \subset \boundary_- V \cap \boundary_- C$, then $f'(S) = \mu - 1$
\item $f'(S) = f(S)$, otherwise.
\end{itemize}
(If $C$ were above $H$, we would adjust the above by requiring $f'(S) = \mu + 1$ for any $S \subset \boundary_- U \cap \boundary_- C \cap \mc{J}$.) See Figure \ref{fig: heightamalg} for an example.
Observe that $f'(S)$ is odd if and only if $S \subset \mc{J}^+$.

Notice that $\boundary_- U$ is the union of $\boundary_- C'$ with $\bigcup\limits_i \boundary_- C'_i \setminus \boundary_- C$. If $F$ is a component of $\boundary_- C'_i \setminus \boundary_- C$ for some $i$, then
\[
f'(F) = f(F) > f(H_i) = f'(H').
\]
If $F$ is a component of $\boundary_- C'$, we have
\[
f'(F) = f(F) > f(H) > f(H_i) = f'(H').
\]

Similarly $\boundary_- V$ is the union of $\boundary_- C \setminus \bigcup\limits_i \boundary_- C'_i $ with $\bigcup\limits_i\boundary_- C_i$. If $F$ is a thin surface of $\mc{J}$ lying in $\boundary_- C \setminus \bigcup\limits_i \boundary_- C'_i$, let $C'' \subset M\setminus \mc{H}$ be the compressionbody on the opposite side of $F$ from $C$. By definition of $\mc{A}(C)$,
\[
f'(F) = \mu - 1 > \mu - 2 \geq f(H'') = f'(H'').
\]
Finally, if $F$ is a thin surface of $\mc{J}$ lying in $\boundary_- C_i$ for some $i$, then
\[
f'(F) = f(F) < f(H_i) = f'(H').
\]
Thus, $f'$ is a height function on $\mc{J}$. By Lemma \ref{height funct lem}, $\mc{J}$ is acyclic.
\end{proof}

\begin{figure}[ht!]
\labellist
\small\hair 2pt
\pinlabel {$1$} [b] at 133 54
\pinlabel {$2$} [b] at 49 77
\pinlabel {$3$} [b] at 49 101
\pinlabel {$4$} [b] at 49 124
\pinlabel {$5$} [b] at 49 147
\pinlabel {$6$} [b] at 49 171
\pinlabel {$15$} [b] at 49 195
\pinlabel {$18$} [b] at 49 219
\pinlabel {$26$} [b] at 133 220
\pinlabel {$15$} [b] at 133 195
\pinlabel {$6$} [b] at 133 170
\pinlabel {$5$} [b] at 133 147
\pinlabel {$4$} [b] at 133 125
\pinlabel {$101$} [b] at 133 256
\pinlabel {$98$} [b] at 228 219
\pinlabel {$13$} [b] at 228 194
\pinlabel {$10$} [b] at 228 170

\pinlabel {$1$} [b] at 468 52
\pinlabel {$2$} [b] at 383 75
\pinlabel {$3$} [b] at 383 98
\pinlabel {$4$} [b] at 383 123
\pinlabel {$5$} [b] at 383 145
\pinlabel {$6$} [b] at 383 168
\pinlabel {$15$} [b] at 418 245
\pinlabel {$6$} [b] at 468 168
\pinlabel {$5$} [b] at 468 144
\pinlabel {$4$} [b] at 468 121
\pinlabel {$14$} [b] at 563 215
\pinlabel {$13$} [b] at 563 192
\pinlabel {$10$} [b] at 563 168

\endlabellist
\includegraphics[scale=0.5]{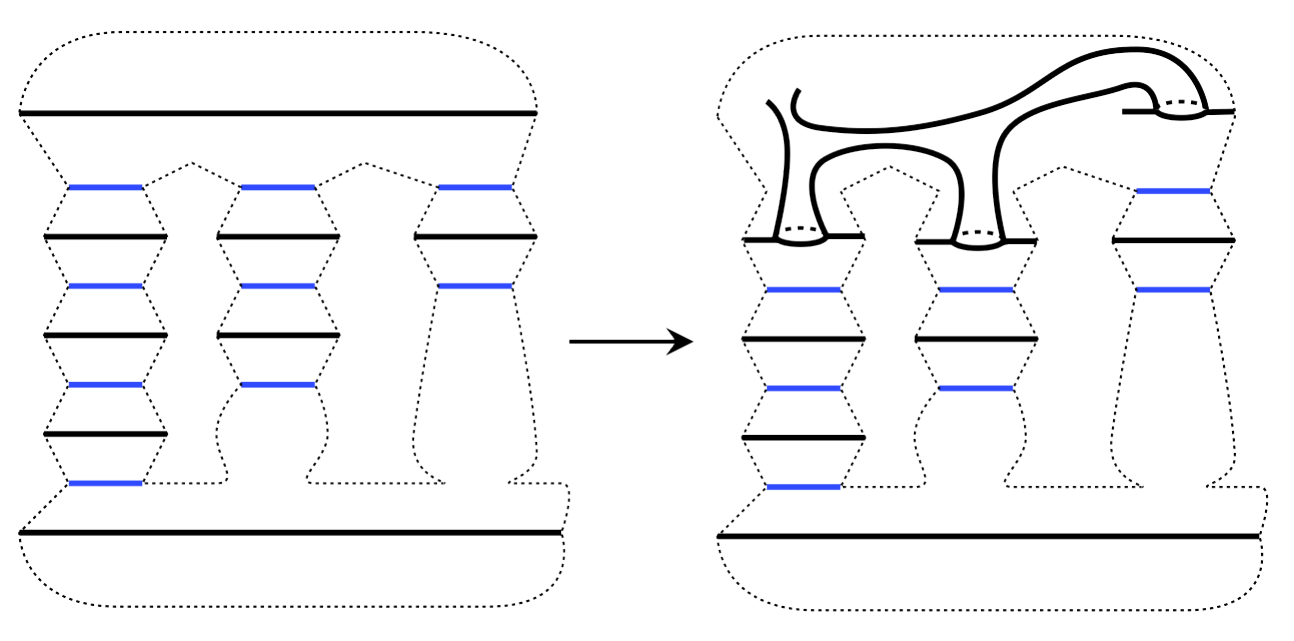}
\caption{An example of an amalgamation respecting a height function. The numbers indicate the heights of the thick and thin surfaces. The top thick surfaces are amalgamated with the next two thick surfaces in the left two columns but not the next thick surface in the rightmost column. Those are the two thick surfaces that are amalgamated with the topmost thick surface since they have the largest heights of the thick surfaces immediately adjacent to the topmost thick surface. The fact that the top right thin surface has a higher height is irrelevant.}
\label{fig: heightamalg}
\end{figure}

\begin{corollary}[cf. {Lackenby \cite[Prop. 3.1]{Lackenby}}]\label{Lack thm}
Suppose that $\mc{H}$ is an oriented generalized Heegaard surface with height function $f$. Then there is an oriented Heegaard surface $H$ obtained from $\mc{H}$ by a sequence of amalgamations consistent with the induced height functions. Furthermore, up to isotopy, $H$ is independent of both $f$ and the sequence of amalgamations.
\end{corollary}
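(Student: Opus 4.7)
The plan is to prove existence by an inductive amalgamation procedure and to deduce uniqueness from the cited result of Lackenby.

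For existence, I argue by induction on $|\pi_0(\mc{H}^-)|$, the number of thin components of $\mc{H}$. If $\mc{H}^- = \nil$, then $\mc{H}$ is already a Heegaard surface and there is nothing to amalgamate. Otherwise, pick any thin component $F \subset \mc{H}^-$ and let $C$ be one of the two compressionbodies of $M \setminus \mc{H}$ meeting $F$ along $\boundary_- C$. Then $F$ witnesses that $\mc{C}(C)$ is nonempty, so $\mc{A}(\boundary_+ C)$ is also nonempty. Amalgamating $\boundary_+ C$ with all thick surfaces in $\mc{A}(\boundary_+ C)$ produces $\mc{J}$, and by Definition \ref{determined} this amalgamation is consistent with $f$. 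By the preceding lemma, $\mc{J}$ is an oriented acyclic generalized Heegaard surface on which $f$ naturally induces a height function $f'$. Since the amalgamation consumes at least the thin components shared between $\boundary_- C$ and the negative boundaries of the compressionbodies with positive boundary in $\mc{A}(\boundary_+ C)$, we have $|\pi_0(\mc{J}^-)| < |\pi_0(\mc{H}^-)|$, and the inductive hypothesis applied to $(\mc{J}, f')$ produces the desired Heegaard surface $H$.

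The main obstacle is uniqueness: the claim that $H$ is independent of both $f$ and the sequence of amalgamations. For this I appeal directly to \cite[Prop. 3.1]{Lackenby}, which asserts that any two Heegaard surfaces produced from a common generalized Heegaard surface by iterated amalgamation are ambient isotopic in $M$. Every consistent amalgamation is, a fortiori, an amalgamation in the sense of Definition \ref{amalg def2}, so any Heegaard surface produced by the inductive procedure above is of the form contemplated by Lackenby's result. Consequently, the output $H$ is well-defined up to ambient isotopy, independently of the choice of height function and of the particular sequence of consistent amalgamations performed along the way.
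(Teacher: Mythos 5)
Your existence argument is fine and matches what the paper leaves implicit: whenever $\mc{H}^- \neq \nil$ some compressionbody $C$ has $\mc{A}(C) \neq \nil$, amalgamating $\boundary_+ C$ with all of $\mc{A}(C)$ is consistent with $f$ (Definition \ref{determined}), the induced function is again a height function, and the number of thin components strictly decreases, so induction terminates in a Heegaard surface.

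The gap is in uniqueness, which is the real content of the corollary. You attribute to \cite[Prop.~3.1]{Lackenby} the assertion that \emph{any} two Heegaard surfaces produced from a common generalized Heegaard surface by iterated amalgamation are ambient isotopic. That is not what Lackenby proves; it is (a strengthening of) the very statement to be established, so the appeal is circular --- indeed it is even stronger than the corollary, which deliberately restricts to sequences consistent with the induced height functions. Lackenby's proposition concerns a generalized Heegaard splitting in the linearly ordered sense --- the manifold cut along a fixed collection of surfaces into pieces carrying Heegaard splittings, amalgamated level by level --- and asserts independence only of the choices made \emph{within} that construction (complete disc systems, product structures, projections of scars). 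It does not, as stated, address (i) the dependence on the height function $f$, which determines how thick surfaces are grouped at each stage, nor (ii) the dependence on the order in which the partial amalgamations of Definition \ref{amalg def2} are performed in the dual-digraph setting. The paper's proof does genuine work to bridge exactly this gap before citing Lackenby: it pads $\mc{H}$ with parallel thick/thin copies of thin surfaces (harmless because amalgamating across a product compressionbody is the same as consolidation), which makes any two height functions comparable, and it then argues that every consistent sequence may be assumed to amalgamate the thick surfaces at height $i$ with those at height $i-1$ simultaneously, i.e., to be in the level-by-level normal form to which Lackenby's proposition actually applies. Without a normalization argument of this kind, your uniqueness step is a citation of the conclusion rather than a proof of it.
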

\begin{proof}
Let $\mc{H}$ be an oriented generalized Heegaard surface with height function $f$ and let $s$ be a sequence of amalgamations consistent with the induced height functions. For any compressionbody $C \subset M \setminus \mc{H}$ with $C$ below $\boundary_+ C$, if $F$ is a component of $\boundary_- C$ with $f(F) < f(\boundary_+ C)$, add $(f(\boundary_+ C) - f(F) + 1)/2$ parallel copies of $F$ to $\mc{H}$. One component in each copy is a thick surface and one is a thin surface and, together with $F$ and $\boundary_+ C$, the thick and thin surfaces alternate. Observe that we can restore $\mc{H}$ by consolidating the parallel copies we just inserted.  We can extend both the height function $f$ and the sequence of amalgamations to incorporate these new copies. As amalgamating across a product region is the same as consolidation (Lemma \ref{amalg prod}), this does not affect the result. Consequently, the result of amalgamation consistent with a height function, is independent of the height function, depending only on $\mc{H}$ and on the order of thick surfaces chosen in $s$. We now show how to eliminate even this latter dependency.

Let $\mc{H}'$ be the new generalized Heegaard surface after one of the amalgamations. Its dual digraph is the result of merging edges of the dual digraph of $\mc{H}$. Observe that if $C$ is a compressionbody below $\boundary_+ (C)$, then 
\[
\boundary_- C \cap \mc{H}' \subset f^{-1}(f(\boundary_+ C) - 1)
\]
and if $C$ is above $\boundary_+ C$ then
\[
\boundary_- C \cap \mc{H}' \subset f^{-1}(f(\boundary_+ C) + 1).
\]
We may as well assume, therefore, that for each term of the amalgamation sequence $s$, there is an $i \in \N$ such that the thick surfaces at height $i$ are amalgamated with the thick surfaces at height $i - 1$. It then follows from \cite[Prop. 3.1]{Lackenby} that $H$ is, up to isotopy, independent of $s$ and of the choice of disc sets used in each amalgamation.
\end{proof}

We can now prove additivity of Heegaard genus and  Haken's Lemma. Although the former follows from the latter, it is instructive to prove it independently.

\begin{theorem}[Additivity of Heegaard genus]
If $M$ contains a nonseparating sphere $S$, then $g(M|_S) = g(M) - 1$ and if a connected 3-manifold $M$ is the connected sum of $M_1$ and $M_2$, then $g(M) = g(M_1) + g(M_2)$.
\end{theorem}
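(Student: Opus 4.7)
The plan is to prove both claims by a single thinning-and-amalgamation scheme that in effect computes $g(M)$ from the prime decomposition of $M$. For the easy halves, I construct explicit Heegaard surfaces. Given minimal-genus Heegaard surfaces $H_1, H_2$ for $M_1, M_2$, I form the oriented generalized Heegaard surface with $\mc{H}^+ = H_1 \cup H_2$ and $\mc{H}^- = S$ (the reducing sphere); its dual digraph is a path, so it is acyclic, and Corollary \ref{Lack thm} amalgamates it to a Heegaard surface of $M = M_1 \# M_2$. A direct Euler-characteristic computation gives $\chi = \chi(H_1) + \chi(H_2) - 2$, hence genus $g(M_1) + g(M_2)$, so $g(M) \leq g(M_1) + g(M_2)$. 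For a minimal-genus Heegaard surface $H'$ of $M|_S$, arranging that both 3-balls capping the scars of $S$ lie in the same compressionbody of $H'$ and adding a tube through the $S^2 \times I$ used to rebuild $M$ from $M|_S$ produces a Heegaard surface of genus $g(H') + 1$, yielding $g(M) \leq g(M|_S) + 1$.

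For the hard halves, let $H$ be a minimal-genus Heegaard surface for $M$. By Theorem \ref{locally thin}, thin $H$ to a locally thin oriented acyclic generalized Heegaard surface $\mc{H}$, noting that Corollary \ref{Lack thm} ensures amalgamating $\mc{H}$ recovers $H$ up to isotopy. Since $M$ is reducible in either case, Corollary \ref{thin spheres exist} produces a collection $P \subset \mc{H}^-$ of essential spheres such that every component $N_j$ of $M|_P$ is irreducible; moreover $P$ contains a nonseparating sphere whenever $M$ does. Within each $N_j$, after capping with 3-balls the spherical boundary components produced by cutting along $P$, the restriction of $\mc{H}$ is again an acyclic oriented generalized Heegaard surface (ball-capping only trims spherical negative-boundary components from the existing compressionbodies), so Corollary \ref{Lack thm} amalgamates it to a Heegaard surface $J_j$ for $N_j$ with $g(N_j) \leq g(J_j)$. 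A further amalgamation across $P$ must then recover $H$ up to isotopy.

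An Euler-characteristic computation across the $m = |P|$ spheres in $P$ yields
\[
\chi(H) = \sum_j \chi(J_j) - 2m, \qquad \text{equivalently} \qquad g(H) = \sum_j g(J_j) + m - c + 1,
\]
where $c$ is the number of components of $M|_P$. Because the pieces $N_j$ are irreducible, rebuilding $M$ from $M|_P$ via the gluings specified by $P$ consists of exactly $c - 1$ gluings that merge two different components and $m - c + 1$ gluings that loop a single component back to itself; uniqueness of prime decomposition forces the self-loop count to equal the number $b$ of $S^2 \times S^1$ summands of $M$. Thus $g(M) = g(H) \geq \sum_j g(N_j) + b$, and induction on the number of prime summands (combined with the easy halves) extracts the two stated equalities, using in the nonseparating case the identification $M = M|_S \# (S^2 \times S^1)$ together with $g(S^2 \times S^1) = 1$. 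The chief obstacle is the combinatorial identification $m - c + 1 = b$, which is what actually transforms the topological accounting of thin essential spheres into the genus accounting the theorem requires; a secondary point is verifying that acyclicity is inherited by the restriction of $\mc{H}$ to each $N_j$ so that the double amalgamation is legitimate.
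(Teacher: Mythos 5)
Your proposal is correct and runs on the same machinery as the paper's proof --- thinning to a locally thin surface (Theorem \ref{locally thin}), extracting essential thin spheres $P$ via Corollary \ref{thin spheres exist}, amalgamating the complementary pieces via Corollary \ref{Lack thm}, and combining $\netchi$ bookkeeping with uniqueness of prime decomposition --- but you organize the hard direction differently. The paper first handles nonseparating spheres by deleting a \emph{single} thin nonseparating sphere $P_0$ and amalgamating $\mc{H}\setminus P_0$ to get $g(M|_{P_0})\le g(M)-1$ directly, and only then treats connected sums after reducing to the case with no nonseparating spheres; you instead prove one master inequality $g(M)\ge \sum_j g(N_j)+b$ via the dual-graph count $b=m-c+1$ (correct, since the $N_j$ are irreducible, so every $S^2\times S^1$ summand comes from a non-tree gluing) and then extract both equalities by iterated subadditivity. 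Your route gives a uniform treatment of both statements at the price of leaning a bit harder on uniqueness of prime decomposition; the paper's argument is more local (one sphere at a time) and uses prime decomposition only to identify $M|_{P_0}$ with $M|_S$ and to sort the factors of $M_1$ and $M_2$. Two small remarks. First, your sentence that ``a further amalgamation across $P$ must then recover $H$'' is not justified by the tools available at this point of the paper when $P$ contains nonseparating spheres (that needs the spherical self-amalgamation of Lemma \ref{lem: self-amalg} and Definition \ref{def: amalg-ob}, developed later); fortunately it is never used, since your genus count needs only minimality of $H$ and invariance of $\netchi$ under thinning and under the amalgamations inside each $N_j$. Second, your easy-half construction in the nonseparating case (both scars in one compressionbody, then a tube through the $S^2\times I$) is sound --- it is exactly the spherical self-amalgamation formalized later --- whereas the paper instead inserts a thick sphere with two thin spheres and amalgamates, staying entirely within the amalgamation framework already established.
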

\begin{proof} 
The case when $\boundary M$ contains spheres follows relatively easily from the case when it does not. (Capping off spherical components of $\boundary M$ with 3-balls does not affect the Heegaard genus of $M$. It can, however convert an essential sphere $S$ into an inessential sphere. However, if that occurs, the sphere $S$ bounds a punctured 3-ball in $M$, which has Heegaard genus 0.) For simplicity, assume that $\boundary M$ does not contain spheres.  For a generalized Heegaard surface $\mc{H}$, let $\netchi(\mc{H}) = -\chi(\mc{H}^+) + \chi(\mc{H}^-)$. It is easy to verify that if $\mc{H}$ thins to $\mc{J}$ or if $\mc{J}$ amalgamates to $\mc{H}$, then $\netchi(\mc{H}) = \netchi(\mc{J})$. We treat the case of separating and nonseparating spheres separately; in each case, proving two inequalities.

We start by proving the statement for nonseparating spheres.

Suppose that $M$ contains a nonseparating sphere $P$. Let $H$ be a minimal genus Heegaard surface for $M|_P$. The scars from $P$ are two 3-balls $B_1, B_2$ in $M|_P$. We may isotope $H$ so that those 3-balls lie on the same side of $H$.  On that side of $H$, choose a 3-ball $B$ containing those scars.  Remove the interiors of  $B_1, B_2$ creating two spherical boundary components $F_1, F_2$ of $M\setminus P$. Let $W = S^2 \times I$ and let $H' \subset W$ be a sphere bounding a 3-ball in $W$. Let $M'$ be the result of gluing the components of $\boundary W$ to $F_1 \cup F_2$. Then it is possible to assign orientations to $\mc{H} = H \cup F_1 \cup F_2 \cup H'$ so that it is an oriented, acyclic generalized Heegaard surface with $H, H' \subset \mc{H}^+$ and $F_1, F_2 \subset \mc{H}^-$. Notice that $M'$ is homeomorphic to $M$ and that
\[
\netchi(\mc{H}) = -\chi(H) - 2 + 4 = -\chi(H) + 2.
\]
Amalgamate $\mc{H}$ to a Heegaard surface $J$ for $M'$. It has genus equal to $g(H) + 1$. Thus, $g(M)-1 \leq g(M|_P)$. 

Now suppose that $H$ is a Heegaard surface for $M$ of minimal genus. Thin $H$ to a locally thin acyclic oriented generalized Heegaard surface $\mc{H}$. By Corollary \ref{thin spheres exist}, there exists a set of spheres $\mc{S} \subset \mc{H}^-$ such that $M|_\mc{S}$ is irreducible and so that $\mc{S}$ contains a nonseparating sphere $P_0$. Let $\mc{J} = \mc{H}\setminus P_0$ and observe it is an oriented acyclic generalized Heegaard surface for $M|_{P_0}$ with $\netchi(\mc{J}) = \netchi(\mc{H}) - 2 = 2g(H) - 4$. Consistently with a height function, amalgamate $\mc{J}$ to a Heegaard surface $J$ for $M|_{P_0}$ with $\netchi(J) = \netchi(\mc{J})$. Consequently, $g(J) = g(H) - 1$. Thus, $g(M|_{P_0}) \leq g(M) - 1$. Since, $M|_{P_0}$ is homeomorphic to $M|_P$, we have $g(M|_P) = g(M) - 1$.

We now prove the additivity result. Assume that $M = M_1 \# M_2$. By the previous result and uniqueness of prime decompositions of 3-manifolds, we may assume that none of $M$, $M_1$, $M_2$ contain a nonseparating sphere.

Let $H_i$ be a minimal genus Heegaard surface for $M_i$ (with $i = 1,2$). In $M$, there is a sphere $F$ separating $H_1$ from $H_2$ and with $M|_F = M_1 \cup M_2$. Give each of $H_1, H_2, F$ an orientation so that $H_1 \cup H_2 \cup F$ is a generalized Heegaard surface $\mc{H}$ with $H_1, H_2 \subset \mc{H}^+$ and $F \subset \mc{H}^-$. Amalgamating $H_1$ and $H_2$ across $F$ produces a Heegaard surface $H$ for $M$ with \[2g(H) -2= \netchi(H) = \netchi(\mc{H})= 2g(H_1) + 2g(H_2) - 2.\] Thus, $g(M) \leq g(M_1) + g(M_2)$.

Now suppose that $H$ is a Heegaard surface for $M$ of minimal genus. Thin $H$ to a locally thin acyclic oriented $\mc{H}$. There exists a set of spheres $\mc{S} \subset \mc{H}^-$ such that $M|_{\mc{S}}$ is irreducible. By uniqueness of prime decompositions of 3-manifolds, we may group the components of $M|_{\mc{S}}$ into those that are prime factors of $M_1$ and those that are prime factors of $M_2$. As above, by reconstituting $M_1$ and $M_2$ from their factors, we may assemble the components of $\mc{H}\setminus \mc{S}$ into oriented, acyclic generalized Heegaard surfaces $\mc{H}_1$ and $\mc{H}_2$ for $M_1$ and $M_2$. (This may involve replacing components of $\mc{S}$ with other thin spheres.) Since every sphere is separating, by the calculation at the start, we see that $g(M) \geq g(M_1) + g(M_2)$. Thus, equality holds.
\end{proof}

\begin{theorem}[Haken's Lemma]\label{Haken Lem}
Every Heegaard surface $H$ for a compact, connected, orientable reducible 3-manifold $M$ is reducible.
\end{theorem}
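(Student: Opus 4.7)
The plan is to thin $H$ to a locally thin, oriented, acyclic generalized Heegaard surface $\mc{H}$, locate an essential thin sphere $P_0 \subset \mc{H}^-$, amalgamate $\mc{H}$ back to a Heegaard surface $J$ in such a way that $|J \cap P_0|=1$, and apply Corollary~\ref{Lack thm} to identify $J$ with $H$ up to isotopy. Since $P_0$ is essential, this will exhibit $H$ (after isotopy) as reducible via $P_0$.

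First I would reduce to the case that $\boundary M$ has no spherical components via a capping-off argument (with the degenerate case in which capping yields an irreducible manifold requiring separate treatment via an essential sphere parallel to a boundary component). Then Theorem~\ref{locally thin} thins $H$ to a locally thin, oriented, acyclic $\mc{H}$, and Corollary~\ref{thin spheres exist} produces an essential sphere $P_0 \subset \mc{H}^-$. Equip $\mc{H}$ with a height function $f$ via Lemma~\ref{height funct lem}; by Corollary~\ref{Lack thm}, any sequence of amalgamations consistent with $f$ produces a Heegaard surface $J$ isotopic to $H$, and the isotopy class of $J$ is independent both of $f$ and of the auxiliary s-disc collections chosen in each step.

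In the amalgamation step absorbing the thin surface $F \subset \mc{H}^-$ containing $P_0$, a complete collection of s-discs $\Delta_-$ is required in the compressionbody $U_-$ lying between $F$ and one of the adjacent thick surfaces. Here I would apply the Monopod Lemma~\ref{monopod lemma} to $U_-$ to obtain $\Delta_-$ so that each component of $F \subset \boundary_- U_-$ receives exactly one scar. By Corollary~\ref{amalg intersect}, the resulting amalgamated thick surface meets $P_0$ in exactly one simple closed curve. After this step $P_0$ sits in the interior of the freshly formed compressionbody; since every sphere in a compressionbody separates (Lemma~\ref{subcompbodies}(5)), for each later amalgamation whose s-disc collection lies in a compressionbody containing $P_0$, I would isotope the discs to lie entirely in one complementary region of $P_0$ within that compressionbody, ensuring the amalgamation tubes never cross $P_0$.

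Iterating through all remaining amalgamations then produces a Heegaard surface $J$ with $|J \cap P_0|=1$; combined with the isotopy $J \simeq H$ from Corollary~\ref{Lack thm}, this transports $P_0$ to a reducing sphere for $H$. The main obstacle will be the last part of the previous paragraph: verifying that at each subsequent amalgamation the complete collection of s-discs can be isotoped off $P_0$ while remaining a valid complete collection for the ambient compressionbody. This requires a short induction using Lemma~\ref{subcompbodies}(5) inside each post-amalgamation compressionbody, and it is the technical heart of the argument.
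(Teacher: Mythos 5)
Your proposal follows essentially the same route as the paper's proof: thin $H$ to a locally thin generalized Heegaard surface (Theorem \ref{locally thin}), extract an essential thin sphere via Corollary \ref{thin spheres exist}, amalgamate using discs furnished by the Monopod Lemma \ref{monopod lemma} so that Corollary \ref{amalg intersect} yields a single curve of intersection, and invoke Corollary \ref{Lack thm} to identify the amalgamated surface with $H$ up to isotopy. The one inaccuracy is in your bookkeeping for the later amalgamations: after its level is absorbed, $P_0$ does not sit in the interior of a compressionbody but meets the new thick surface in a single circle, so Lemma \ref{subcompbodies}(5) does not apply as you state it; instead one keeps the subsequent complete disc collections and amalgamation tubes off the two disc pieces of $P_0$ (or uses the order-independence in Corollary \ref{Lack thm} to postpone the amalgamation across the level containing $P_0$ to the end) --- a detail the paper's own proof of Theorem \ref{Haken Lem} leaves implicit.
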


\begin{proof}
As we mentioned in the introduction, if $\boundary M$ contains a sphere, then every $\boundary$-parallel sphere is a reducing sphere. We can easily isotope $H$ to intersect such a sphere in a single closed curve. (Likely the curve is inessential on $H$.) Hence, if $\boundary M$ contains a sphere, $H$ is reducible. On the other hand, we can also show that if $M$ contains an essential sphere (whether or not $\boundary M$ contains spheres), then there is an essential sphere in $M$ intersecting $H$ in a single simple closed curve. When $\boundary M$ contains spheres, this is a marginally stronger version of Haken's lemma than what we have stated.

Assume, therefore, that $M$ contains an essential sphere. In particular, it is not the connected sum of two or three 3-balls. Let $\wihat{M}$ be the result of capping off $\boundary M$ with 3-balls. Any essential sphere in $\wihat{M}$ that is disjoint from the 3-balls we used to cap off $\boundary M$ is also essential in $M$. If $\wihat{M}$ has no essential spheres, then $M$ is the connected sum of at least four 3-balls; in particular $\boundary M$ contains at least four spheres. The Heegaard surface $H$ is also a Heegaard surface for $\wihat{M}$.

Suppose, first, that $M$ is the connected sum of at least four 3-balls. Let $P_1, P_2$ be two spheres of $\boundary M$ lying on the same side of $H$. Let $P \subset M \setminus H$ be a sphere separating $P_1$ and $P_2$ from both $H$ and all the other spheres in $\boundary M$. Then $P$ is an essential sphere in $M$. We may isotope it so that it intersects $H$ in a single simple closed curve; likely the curve is inessential in $H$. Hence, there is an essential sphere in $M$ intersecting $H$ in a single closed curve, as claimed.

It remains to consider the case when $\wihat{M}$ contains an essential sphere. Let $\mc{H}$ be a locally thin generalized Heegaard surface for $\mc{H}$ obtained by thinning $H$ (Theorem \ref{locally thin}). By Corollary \ref{thin spheres exist}, $\mc{H}^-$ contains at least one essential (in $\wihat{M}$) sphere $P$. By Lemma \ref{amalglem} and Corollary \ref{amalg intersect}, we may amalgamate $\mc{H}$ to a Heegaard surface $J$ such that $|J \cap P| = 1$. By Theorem \ref{Lack thm}, $J$ is isotopic to $H$. That isotopy takes $P$ to a reducing sphere $P'$ for $H$ in $\wihat{M}$. As $H$ is disjoint from the 3-balls $\wihat{M} \setminus M$, we may further isotope $P'$, relative to $H$, to be disjoint from those 3-balls. The sphere $P'$ is then an essential sphere in $M$ intersecting $H$ in a single simple closed curve, as desired.
\end{proof}

We can also prove the Structure Theorem for Weakly Reducible Heegaard surfaces.

\begin{theorem}[Structure Theorem for Weakly Reducible Heegaard Surfaces]\label{GCT}
Suppose that $H$ is an oriented weakly reducible Heegaard surface for a compact, orientable 3-manifold (possibly with boundary) $M$. Then there exists an oriented incompressible surface $F\subset M$ such that:
\begin{enumerate}
\item For each component $F'$ of $F$, $F' \cap H$ consists of a single simple closed curve, which is inessential in $F'$. The curves of $F \cap H$ bound discs in $F$ all on the same side of $H$; we may perform the construction so that the discs lie on which ever side we wish.
\item $F \cap H$ separates $H$.
\item For each component $F'$ of $F$, $-\chi(F') < -\chi(H)$.
\item There is a weak reducing pair of discs separated by $F$. 
\item If some component of $F$ is a 2-sphere bounding a 3-ball, then there is a stabilizing pair of discs for $H$ disjoint from $F$
\item If some component of $F$ is $\boundary$-parallel, then there is a $\boundary$-stabilizing disc for $H$ disjoint from $F$.
\item There is a generalized Heegaard surface $\mc{H}$ for $M$ such that $\mc{H}^+$ is isotopic to $H+F$, the Haken sum of $H$ and $F$, and $\mc{H}^- = F$; furthermore, the thick surfaces $\mc{H}^+$ are strongly irreducible.
\end{enumerate}
\end{theorem}
\begin{proof}
The proof is much the same as our proof of Haken's Lemma. Let $\mc{H}$ be a locally thin generalized Heegaard surface obtained by thinning a weakly reducible Heegaard surface $H$ (Theorem \ref{locally thin}). Let $F = \mc{H}^-$ and note that $\mc{H}^+$ is strongly irreducible. As before, we may amalgamate $\mc{H}$ to a Heegaard surface $J$ such that for each component $F' \subset \mc{H}^-$, we have $|F' \cap J| = 1$. By the construction, the curve $F' \cap J$ is essential in $J$ and inessential in $F'$. We note that in our process of amalgamation, we always extend the feet of 1-handles lying on the thick surface above a thin surface through the thin surface to lie on the thick surface below the thin surface. Consequently, each curve in $F \cap H$ bounds a disc in $F$ below $H$. We could instead have extended in the other direction, ensuring that each curve of $F \cap H$ bounds a disc in $F$ lying above $H$. This would be equivalent to reversing the orientation on $\mc{H}$, performing the amalgamation as we have described it, and then reverting the orientation back to the original. In any case, it is straightforward to confirm that the Haken sum $H + F$ of $H$ and $F$ is, after a small isotopy, disjoint from $F$ and is, in the complement of $F$, isotopic to $\mc{H}^+$ as claimed. This shows Conclusions (1) and (7).

In $M$, the dual digraph to $F$ can be constructed from the dual digraph to $\mc{H}$ by collapsing each edge corresponding to a thick surface to a single vertex. Thus, the dual digraph to $F$ is acyclic. Consequently, $F$ separates $M$ and, therefore, separates $J$. This is Conclusion (2). 

If $F' \subset F$ is a component, observe that it can be obtained from $H$ by a sequence of compressions, at least one of which is a compression along a compressing (rather than semi-compressing) disc. Thus, $-\chi(F') < -\chi(H)$. This is Conclusion (3).

Consider the construction of $J$. When we amalgamate, according to our construction, we extend a foot of a single 1-handle through each component of $F$. Let $D_-$ be the core disc for one of those 1-handles. In our construction, $D_-$ is an s-disc for a thick surface $H_+$ such that $D_-$ is in the compressionbody below $H_+$. The foot is extended so as to lie on a Heegaard surface $H_-$ in such a way that it misses the scars from a complete set of s-discs for the compressionbody above $H_+$. Let $D_+$ be one of those s-discs. Then $F$ separates $D_-$ from $D_+$. In subsequent amalgamations, although we do need to slide the feet of 1-handles onto other one handles to ensure that when we amalgamate each component of $F$ intersects $J$ in a single loop, we can do so without sliding the foot of a 1-handle across $F$; thus, discs $D_-$ and $D_+$ remain disjoint and separated by $F$ throughout the amalgamation process. This is Conclusion (4).

Suppose that some component $F'$ of $F$ is a 2-sphere bounding a 3-ball in $M$. Since all closed surfaces in 3-balls are either spheres or are compressible, we may pass to an innermost such sphere. It bounds a 3-ball containing a unique component $H'$ of $\mc{H}^+$. That component is not a 2-sphere, for then $H'$ and $F$ could be amalgamated, contradicting the thinness of $\mc{H}$. Since $H'$ is strongly irreducible, by Waldhausen's classification of Heegaard surfaces of $S^3$, it is a torus and admits a stabilizing pair. When performing the amalgamations, we can ensure that the stabilizing pair remains disjoint from $F$. This is Conclusion (5).

If some component $F'$ of $F$ is $\boundary$-parallel, a similar argument shows the existence of a destabilizing disc disjoint from $F$. We use Scharlemann-Thompson's classification of Heegaard splittings of $\text{(surface)} \times I$ in place of the Waldhausen classification. This is Conlcusion (6).
\end{proof}

In the next section, we introduce the operation of ``juggling'' a generalized Heegaard surface. It is an operation that allows us to move spherical thin surfaces from one side of a thick surface to the other side and is used to eliminate inessential curves of intersection between a generalized Heegaard surface and an essential surface. Unfortunately, it can also have the effect of introducing coherent cycles into the dual digraph. Such cycles will always have an edge corresponding to a nonseparating sphere. In order to accommodate those, before proceeding, we generalize our previous work to allow for ``spherical self-amalgamation.'' See Figure \ref{fig: selfamalg} for a depiction.

\begin{figure}[ht!]
\labellist
\small\hair 2pt
\pinlabel {$A$} at 172 406
\pinlabel {$B$} at 172 347
\pinlabel {$H$} [l] at 359 376
\pinlabel {$P$} [b] at 260 360
\pinlabel {$P$} [t] at 85 389
\pinlabel {$H'$} [l] at 359 58
\pinlabel {$H|_{\Delta_A}$} [l] at 359 199
\endlabellist
\centering
\includegraphics[scale=0.5]{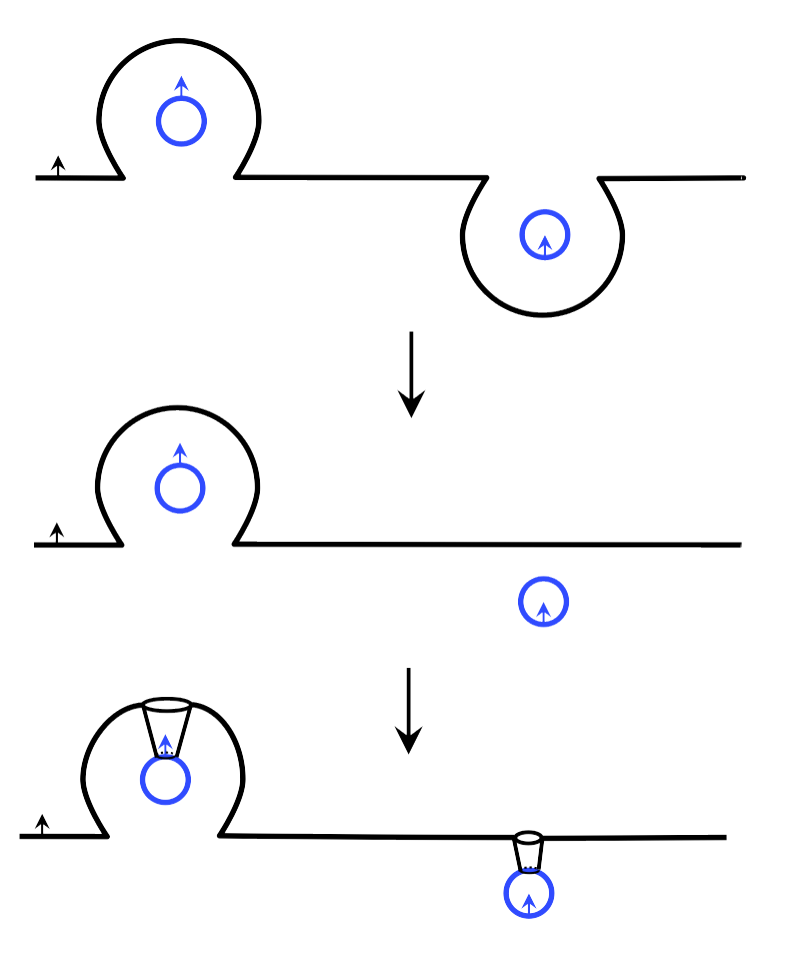}
\caption{An example of spherical self-amalgamation of a Heegaard surface $H$ across a nonseparating sphere $P$. The compressionbody above $H$ in $M\setminus P$ is $A$ and the one below $H$ is $B$. The figure shows the two scars of $P$ in $M|_P$, one on either side of $H$. The Heegaard surface $H'$ is formed by moving $H$ across one of the scars (by compressing along a semi-compressing disc) and then attaching a tube passing through $P$. }
\label{fig: selfamalg}
\end{figure}

\begin{definition}
Suppose that $M$ is a 3-manifold containing a nonseparating collection of mutually disjoint spheres $P$ and that $H$ is an oriented Heegaard surface for $M \setminus P$ such that for each component $P_0 \subset P$ the surface $H$ separates the two scars of $P_0$. Let $H' \subset M$ be the surface constructed as follows. Let $A$ and $B$ be the compressionbodies of $M\setminus P$ above and below $H$ respectively. Let $\Delta_A \subset A$ be a collection of semi-compressing discs for $H$ such that $A|_{\Delta_A}$ is the disjoint union of $P \times I$ and a compressionbody equivalent to the result of capping off the components of $\boundary_- A \cap P$ with 3-balls. Assume that $\Delta_A$ was chosen so that there is exactly one scar in each component of $P \times I$. Let $\alpha$ be a collection of arcs with endpoints in $H|_{\Delta_A}$ and interiors disjoint from $H|_{\Delta_A}$ chosen so that:
\begin{enumerate}
\item Each arc in $\alpha$ has exactly one endpoint in a scar of $H|_{\Delta_A}$. 
\item Each scar from $\Delta_A$ contains exactly one endpoint of $\alpha$
\item Each component of $\alpha$ is the union of an arc that is a cocore of the 2-handle defined by a disc in $\Delta_A$, a vertical arc in one of the components of $P \times I$, and a vertical arc in $A$.
\end{enumerate}
Let $H'$ be the result of attaching tubes to $H$ along the arcs of $\alpha$. Note that $H'$ is orientable and inherits a transverse orientation from $H$. We say that it is the result of a \defn{spherical self-amalgamation of $H$ across $P$}.
\end{definition}

\begin{lemma}\label{lem: self-amalg}
If $H'$ is obtained from $H$ by a spherical self-amalgamation across $P$, then $H'$ is a Heegaard surface for $M$ isotopic to the connected sum of the Heegaard surface $H$ in $M|_P$ with $|P|$ copies of the genus 1 Heegaard surface of $S^1 \times S^2$.
\end{lemma}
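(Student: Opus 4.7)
The plan is to recognize $H'$ as the amalgamation of a cyclic generalized Heegaard surface $\mc{H}' = H \cup \bigcup_i S_i$ for $M$, where $S_i$ is the boundary sphere of the $P_i \times I$ component of $A|_{\Delta_A}$ (and hence parallel to $P_i$ in $M$). Taking $H$ thick and the $S_i$ thin, I will first verify that $M \setminus \mc{H}'$ consists of two compressionbodies: a ``narrowed'' $A$-side $A''$, obtained from $A$ by truncating its product ends at each $S_i$, and an ``extended'' $B$-side $\hat{B} = B \cup \bigcup_i (P_i \times I)$, obtained by absorbing the $P_i \times I$ pieces into $B$ via the $\pi_i^+ \sim \pi_i^-$ identifications. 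Both have $\partial_+ = H$ and $\partial_- = \bigcup_i S_i$, so the dual digraph of $\mc{H}'$ is cyclic --- this is precisely the situation that motivates introducing spherical self-amalgamation as a separate operation.

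Next, I verify that the construction of $H'$ --- compressing $H$ along $\Delta_A$ to obtain $H|_{\Delta_A} = H^* \cup \bigcup_i S_i$ and then attaching tubes along $\alpha$ --- realizes the amalgamation of $\mc{H}'$ by eliminating each thin sphere $S_i$. Each arc $\alpha_j$ plays the role of a $1$-handle in a Schultens-style amalgamation, and its cocore-plus-vertical decomposition in condition (3) ensures (in the spirit of Lemma~\ref{monopod lemma}) that the tube feet are distributed so that after attachment, $H'$ separates $M$ into two compressionbodies. This shows $H'$ is a Heegaard surface for $M$. This is the step where the cyclic dual digraph precludes a direct invocation of Corollary~\ref{Lack thm}, and so a hands-on topological verification is required: one must check that the compressing discs for $A|_{\Delta_A}$ together with the tube cocores form complete disc systems for the two compressionbodies obtained.

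Finally, since each $S_i$ is parallel to $P_i$ in $M$, surgering $M$ along $\bigcup_i S_i$ recovers $M|_P$, exhibiting $M$ as $M|_P \#\, |P|\,(S^1 \times S^2)$ with each $S_i$ serving as a connect-sum sphere for one $S^1 \times S^2$ factor. Restricted to each such factor, the self-amalgamation tubes a $2$-sphere component of $H|_{\Delta_A}$ to itself through $P_i$ via two $1$-handles, producing a torus --- the standard genus-$1$ Heegaard surface of $S^1 \times S^2$. Combined with the Heegaard $H$ on the $M|_P$ summand via the sphere case of the unlabeled amalgamation lemma situated between Lemma~\ref{amalglem} and Definition~\ref{amalg def2}, this identifies $H'$ up to isotopy with the connected sum $H \# |P|\,T^2$ as claimed. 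The main obstacle is the verification in the second step, where the cyclic dual digraph demands careful case-by-case topological bookkeeping to confirm that the tube attachments yield a genuine Heegaard surface structure on both sides.
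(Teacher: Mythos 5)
Your outline has the right shape, but the two places where the lemma actually requires proof are exactly the places you leave unproved or get wrong. First, the heart of the statement is that both complementary pieces of $H'$ are compressionbodies, and your proposal explicitly defers this (``careful case-by-case topological bookkeeping''). Worse, the check you do propose --- that the compressing discs for $A|_{\Delta_A}$ together with the tube cocores form complete disc systems ``for the two compressionbodies obtained'' --- only pertains to one side: those discs and the meridian discs of the tubes all lie in the compressionbody $A'$ built from the capped-off component of $A|_{\Delta_A}$ by attaching the $1$-handles along $\alpha$. They are not discs in the other side $B'$, which contains $P$ and is where the real work is. In the paper this side is handled by choosing a complete disc system $\Delta_B$ for $B$ disjoint from $\alpha$, absorbing the $P\times I$ pieces of $A|_{\Delta_A}$ into $B'$, and observing that after cutting along $\Delta_B$ the pieces containing $P_0\times[-1,1]$ become $D^2\times[-1,1]$ (hence $3$-balls) once the neighborhood of $\alpha$ is drilled out; only then does reassembling show $B'$ is a compressionbody, and separation of $M$ by $H'$ comes along with it. Your reformulation via the cyclic generalized Heegaard surface $H\cup\bigcup_i S_i$ is a reasonable picture, but as you yourself note, Lemma \ref{amalglem} and Corollary \ref{Lack thm} are unavailable in the cyclic setting, so the reformulation does no logical work and the hands-on verification it was meant to organize is simply absent.

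Second, the connected-sum identification is flawed as stated. Each $S_i$ is parallel to $P_i$, hence nonseparating, so it cannot ``serve as a connect-sum sphere'': summing spheres must be separating and must meet $H'$ in a single circle for the conclusion to be about Heegaard splittings rather than just about the underlying manifolds. The paper instead chooses, for each tube, an arc $\beta_i\subset H$ joining its feet; a regular neighborhood of $P\cup\beta$ is a disjoint union of once-punctured copies of $S^2\times S^1$, and the frontier spheres of these neighborhoods are the separating summing spheres realizing $H'$ as the connected sum of $H$ with $|P|$ genus-one splittings. Relatedly, your local description ``tubes a $2$-sphere component of $H|_{\Delta_A}$ to itself through $P_i$ via two $1$-handles, producing a torus'' is internally inconsistent (a sphere with two handles has genus $2$); in the actual construction the surface inside each $S^2\times S^1$ summand is a sphere tubed to itself by a single tube passing once through $P_i$. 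So both the Heegaard-surface claim and the connected-sum claim still need the arguments the paper supplies.
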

\begin{proof}
As above, let $A$ and $B$ be the compressionbodies above and below $H$ in $M\setminus P$, respectively. Let $P \times [-1,1]$ be a regular neighborhood of $P$ in $M$ so that $P \times [-1,0] \subset B$ and $P \times [0, 1] \subset A$.  The $\boundary$-reduction of $A$ along $\Delta_A$ results in compressionbodies embedded in $M$. The compressionbody equivalent to the result of capping of the components of $\boundary_- A \cap P$ does not contain any sphere of $P$. Tubing $H$ along the arcs of $\alpha$ adds 1-handles to the positive boundary of that compressionbody and so it is a compressionbody $A'$ embedded in $M$. Hence, $H'$ is separating. The other side $B'$ of $H'$ can be constructed as follows. Begin by letting $B' = B$. Let $\Delta_B$ be a complete set of compressing and semi-compressing discs for $B$, each disjoint from $\alpha$. The result of $\boundary$-reducing $B$ along $\Delta_B$ is the union of 3-balls and product compressionbodies whose union contains $P \times [-1,0]$. When we $\boundary$-compress $A$ along $\Delta_A$, include $P \times [0,1]$ into $B'$. Thus, $P \subset B'$ and $B' \setminus P$ is equivalent to $B$. The result of $\boundary$-reducing $B'$ along $\Delta_B$ is now the union of 3-balls, product compressionbodies, and components homeomorphic to $(P_0 \times [-1,1])$, one for each component $P_0$ of $P$. Removing a regular neighborhood of $\alpha$ from $B'$ to make it part of $A'$, converts each of those into a $D^2 \times [-1,1]$, which is a 3-ball. Undoing the $\boundary$-reductions along $\Delta_B$, we see that $B'$ is a compressionbody. 

In $H$, for each arc in $\alpha$ choose an arc in $H$ joining its endpoints. Choose the collection $\beta$ of these arcs to be pairwise disjoint. Note that the regular neighborhood of $P \cup \beta$ consists of $|P|$ copies of $S^2 \times S^1$ with an open 3-ball removed from each. The frontier of a regular neighborhood of $\beta$ witnesses the fact that $H'$ is the connected sum of $H$ with $|P|$ copies of the genus 1 Heegaard surface for $S^2 \times S^1$.
\end{proof}

\begin{definition}\label{def: amalg-ob}
Suppose that $\mc{H}$ is an oriented generalized Heegaard surface such that each coherent cycle in the dual digraph contains an edge corresponding to a thin sphere. If $P \subset \mc{H}^-$ is the union of spheres, we say that it is a \defn{minimal complete nonseparating collection of spheres} if $M \setminus P$ is connected and $\mc{H} \setminus P$ in $M\setminus P$ is acyclic.  We say that a generalized Heegaard surface $\mc{J}$ for $M$ is \defn{amalgamation-obtained} from $\mc{H}$ if there is a minimal complete nonseparating collection of spheres $P \subset \mc{H}^-$ and a height function $f$ on $\mc{H} \setminus P$ for $M\setminus P$ such that $\mc{J}$ is obtained by a sequence of amalgamations of $\mc{H}\setminus P$ consistent with $f$ followed by spherical self-amalgamation along $P$.
\end{definition}

\begin{remark}
Note that each coherent cycle in the dual digraph to $\mc{H}$ contains an edge corresponding to a thin sphere if and only if $\mc{H}$ contains a minimal complete nonseparating collection of spheres.
\end{remark}

\begin{proposition}
Suppose that $\mc{H}$ is an oriented generalized Heegaard surface containing a minimal complete collection of nonseparating spheres. Then there is a Heegaard surface $H$ for $M$ amalgamation-obtained from $\mc{H}$. Furthermore, $H$ is unique up to isotopy.
\end{proposition}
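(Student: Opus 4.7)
The plan is to establish existence by directly implementing the definition, and then to prove uniqueness in three stages: independence from the height function and amalgamation sequence, independence from the discs and arcs used in self-amalgamation, and finally independence from the choice of minimal complete nonseparating collection $P$.

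\textbf{Existence.} By hypothesis there exists a minimal complete nonseparating collection $P \subset \mc{H}^-$, so by Definition \ref{def: amalg-ob} the surface $\mc{H}\setminus P$ is an oriented acyclic generalized Heegaard surface for $M \setminus P$. By Lemma \ref{height funct lem} it admits a height function $f$, and by Corollary \ref{Lack thm} the amalgamation of $\mc{H}\setminus P$ consistent with $f$ produces a Heegaard surface $H_0$ for $M\setminus P$ that is unique up to isotopy. Before spherical self-amalgamation can be applied, I must check that for each component $P_0 \subset P$ the surface $H_0$ separates the two scars of $P_0$ in $M\setminus P$. This follows from the orientation data: the two scars lie in the negative boundaries of the two compressionbodies of $M \setminus \mc{H}$ incident to $P_0$, and these compressionbodies retain their transverse orientations through the amalgamations consistent with $f$, forcing them onto opposite sides of $H_0$. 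Applying Lemma \ref{lem: self-amalg} then yields the required Heegaard surface $H$ for $M$.

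\textbf{Uniqueness with $P$ fixed.} Corollary \ref{Lack thm} gives isotopy-uniqueness of $H_0$ independent of both $f$ and the particular amalgamation sequence. Lemma \ref{lem: self-amalg} identifies $H$ with the connected sum of $H_0$ and $|P|$ copies of the genus-one Heegaard surface of $S^2 \times S^1$, summed at the scars of $P$. Because connected sum of Heegaard surfaces is well-defined up to isotopy (an application of Corollary \ref{Lack thm} to a thin sphere separating the summands) and the genus-one splitting of $S^2 \times S^1$ is the unique such splitting up to isotopy, the choices of $\Delta_A$ and $\alpha$ made in the definition of spherical self-amalgamation do not affect the isotopy class of $H$.

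\textbf{Uniqueness as $P$ varies.} This is the principal obstacle. Given two minimal complete nonseparating collections $P, P' \subset \mc{H}^-$, I plan to produce a finite sequence $P = P_0, P_1, \ldots, P_k = P'$ in which consecutive collections differ in exactly one sphere, via a matroid-style exchange argument on minimal feedback edge sets in the dual digraph of $\mc{H}$ (using that both $M\setminus P_i$ and $M\setminus P_{i+1}$ must be connected and the complements acyclic). For a single swap that replaces a sphere $Q \in P_i$ by $Q' \in P_{i+1}$, the two constructed Heegaard surfaces differ only in which scars are joined by one self-amalgamation tube. Since by Lemma \ref{lem: self-amalg} each such tube realizes a connected-summand copy of the genus-one splitting of $S^2 \times S^1$, and the decomposition only sees the homology class of the sphere, the change is absorbed by a handle slide, i.e.\ an isotopy, of the Heegaard surface in $M$. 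Combining this with the previous two paragraphs shows $H$ is well-defined up to isotopy. I expect the swap reduction itself to be the most delicate step; it will likely require a cycle-by-cycle analysis of the dual digraph together with the connectedness hypothesis to guarantee that intermediate collections $P_i$ remain valid minimal complete nonseparating collections.
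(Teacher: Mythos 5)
Your existence argument and your uniqueness argument for a fixed collection $P$ coincide with the paper's (both rest on Lemma \ref{height funct lem}, Corollary \ref{Lack thm}, and Lemma \ref{lem: self-amalg}), and your check that the amalgamated surface for $M\setminus P$ separates the two scars of each component of $P$ is a legitimate detail the paper leaves implicit. The gap is in the third step, independence from the choice of $P$, which is the real content of the proposition. For the single-swap case your argument is that the two surfaces ``differ only in which scars are joined by one self-amalgamation tube,'' and that since ``the decomposition only sees the homology class of the sphere'' the change is ``absorbed by a handle slide.'' Neither claim is justified, and the first is not accurate: replacing $Q \in P_i$ by $Q'$ changes which sphere remains a thin surface of the acyclic part (and hence is amalgamated across, as in Definition \ref{amalg def2}) and which sphere is removed and later self-amalgamated, so the two constructions are not the same surface up to relocating one tube, and isotopy classes of Heegaard surfaces are certainly not determined by homology classes of tubes. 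You also defer the exchange argument producing the chain $P = P_0, \ldots, P_k = P'$, flagging it as the most delicate step without carrying it out, so as written the proposal establishes existence but not uniqueness.

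The paper's resolution of the single-swap case is short and concrete, and it is the idea your proposal is missing: cut $M$ along $P \cup P'_0 = P' \cup P_0$ (this has two components), amalgamate the restriction of $\mc{H}$ in each component, consistently with height functions, to Heegaard surfaces $H_1$ and $H_2$, and observe that amalgamating either $H_1 \cup P_0 \cup H_2$ or $H_1 \cup P'_0 \cup H_2$ yields a surface isotopic to the connected sum of $H_1$ and $H_2$, since amalgamation across a sphere is connected sum. Spherical self-amalgamation then connect-sums with $|P| = |P'|$ copies of the genus-one splitting of $S^2 \times S^1$ in both cases, so the two resulting Heegaard surfaces are isotopic. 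Some such simultaneous-cutting comparison (or an equally concrete substitute) is what you need to close the gap; the homological appeal will not do it.
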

\begin{proof}
The surface $\mc{H}\setminus P$ is an oriented acyclic generalized Heegaard surface for $M\setminus P$. By Lemma \ref{height funct lem}, it admits a height function. By Corollary \ref{Lack thm}, $\mc{H}\setminus P$ may be amalgamated to an oriented Heegaard surface $J$ for $M\setminus P$ and $J$ is independent of the height function and the sequence of amalgamations. Note that $J \cup P$ is an oriented generalized Heegaard surface for $M$. Let $H$ be the result of spherical self-amalgamation of $J \cup P$ across $P$. By Lemma \ref{lem: self-amalg} it is a Heegaard surface for $M$ and it is amalgamation obtained from $\mc{H}$. 

We need only show that $H$ does not depend (up to isotopy) on the choice of $P$. Suppose that $P' \subset \mc{H}^-$ is another minimal complete nonseparating collection of spheres. It is enough to consider the case when $P$ and $P'$ differ by a single sphere. Let $P_0 \subset P$ and $P'_0 \subset P'$ be the sphere such that $P \setminus P_0 = P' \setminus P'_0$. Let $M' = M \setminus (P \cup P'_0) = M \setminus (P' \cup P_0)$. It has two components, and $\mc{H}$ restricts to an acyclic oriented generalized Heegard surface in each. Choose a height function for each and amalgamate consistent with the height function to create Heegaard surfaces $H_1$ and $H_2$. Since $\mc{H}$ was oriented, both $H_1 \cup P_0 \cup H_2$ and $H_1 \cup P'_0 \cup H_2$ are acyclic oriented generalized Heegaard surfaces for $M\setminus P'$ and $M\setminus P$ respectively. Amalgamate these to Heegaard surfaces $J'$ and $J$ respectively. Observe that each of $J'$ and $J$ is isotopic to the result of connect-summing the Heegaard surfaces $H_1$ and $H_2$. As noted above, performing spherical self-amalmation of $J'$ across $P'$ is isotopic to the result of performing the connected sum of $J'$ with $|P'|$ copies of the genus 1 Heegard surface of $S^2 \times S^1$. Similarly, the self-amalgamation of $J$ across $P$ is isotopic to the result of performing the connected sum of $J$ with $|P|$ copies of the genus 1 Heegaard surface of $S^2 \times S^1$. Since $|P| = |P'|$, the resulting Heegaard surfaces are isotopic. 
\end{proof}

\section{Juggling}\label{juggling}

Suppose that $\mc{H}$ is a generalized Heegaard surface and that $Q \subset M$ is a sphere disjoint from $\mc{H}$. We define an operation, we call a ``juggle'' of $Q$. There are three versions; two are depicted in Figure \ref{fig:juggle}.

\begin{figure}[ht!]
\labellist
\small\hair 2pt
\pinlabel {$P$} [r] at 56 594
\pinlabel {$Q$} [bl] at 195 709
\pinlabel {$\alpha$} [l] at 162 553
\pinlabel {$R$} [r] at 14 520
\pinlabel {$P(\alpha)$} [r] at 71 163
\pinlabel {$R(\alpha)$} [r] at 27 83

\pinlabel {$P=R$} [l] at 1040 608
\pinlabel {$Q$} [bl] at 881 727
\pinlabel {$\alpha$} [b] at 884 535
\pinlabel {$P(\alpha) = R(\alpha)$} [l] at 1061 138

\endlabellist
\centering
\includegraphics[scale=0.35]{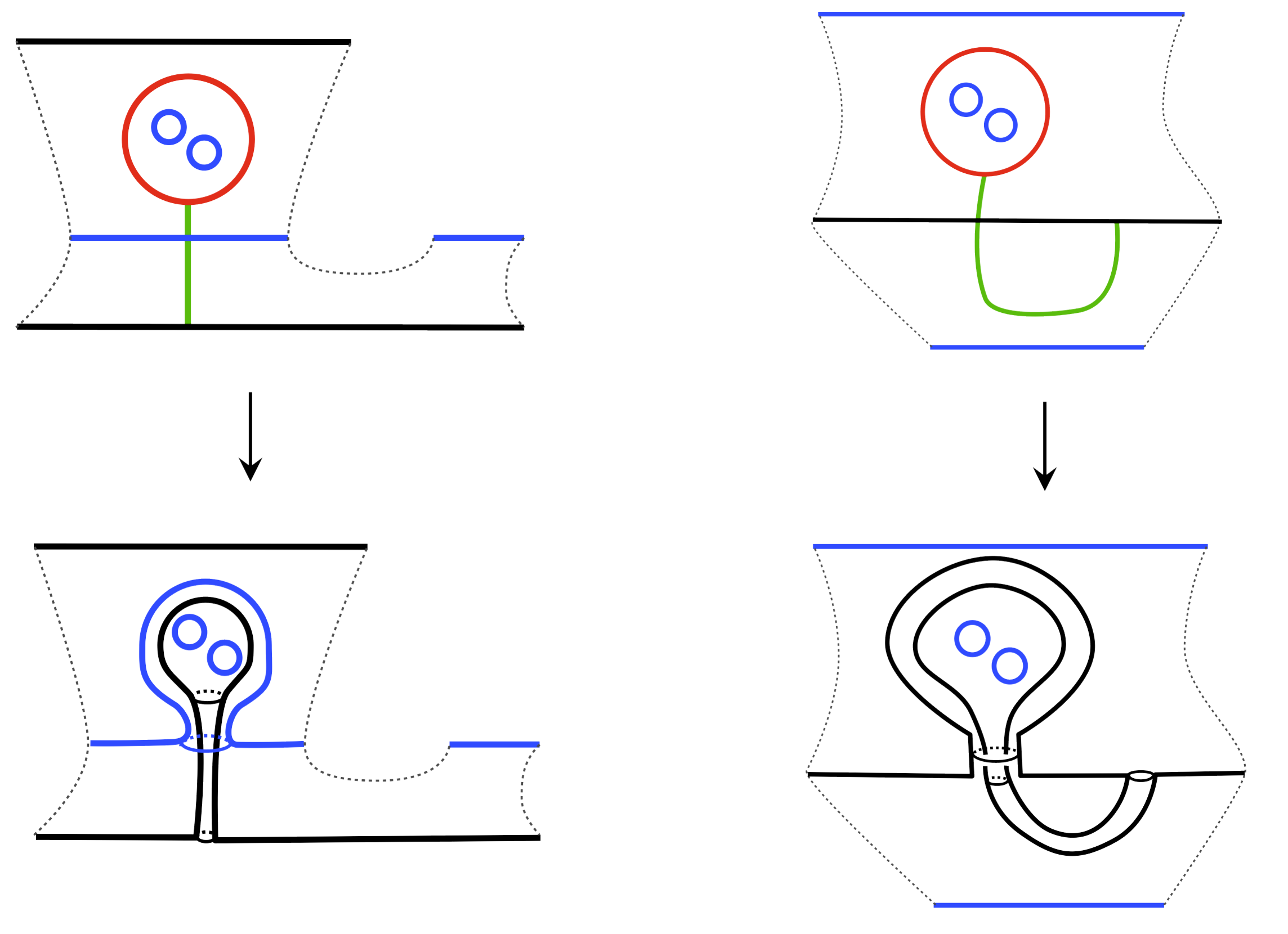}
\caption{The left arrow depicts a juggle (Definition \ref{def: juggle}) where $R$ is a thick surface and $P$ is a thin surface.  The right arrow depicts a juggle where $P = R$ is a thick surface. The path $\alpha$ is shown in green for both juggles. Not shown is the juggle where $P$ is a thick surface and $R$ is a thin surface. The dotted lines simply help delineate the compressionbodies. }
\label{fig:juggle}
\end{figure}

\begin{definition}\label{def: juggle}
Let $Q \subset M\setminus \mc{H}$ be a sphere and let $R$ be a component of $\mc{H}$. Let $C$ be the compressionbody containing $Q$. Suppose that $\alpha$ is an embedded path in the interior of $M$ joining $R$ to $Q$, with interior transverse to $\mc{H}$, disjoint from $Q$, and intersecting $\mc{H}$ exactly once in its interior. Let $P$ be the component of $\mc{H}$ intersecting the interior of $\alpha$. (Possibly $P = R$.) Require that one of $P$ or $R$ is a thick surface. Let $Q_1$ and $Q_2$ be parallel copies of $Q$ in $C$ such that $Q_2$ is between $Q_1$ and $Q$, each intersecting $\alpha$ exactly once (transversally). 

If $P \neq R$, let $P(\alpha)$ be the result of tubing $P$ to $Q_1$ along the portion of $\alpha$ from $P$ to $Q_1$ and let $R(\alpha)$ be the result of tubing $R$ to $Q_2$ along the portion of $\alpha$ from $R$ to $Q_2$, so that $R(\alpha)$ is disjoint from $P(\alpha)$. Give $P(\alpha)$ and $R(\alpha)$ the orientations inherited from $P$ and $R$ respectively. If $P = R$, let $P(\alpha) = R(\alpha)$ be the result of first tubing $P$ along the subarc of $\alpha \setminus P$ joining $P$ to $Q_1$ and then tubing along the entirety of $\alpha$. The surface $\mc{J}$ obtained from $\mc{H}$ by replacing $P$ and $R$ with $P(\alpha)$ and $R(\alpha)$ respectively is obtained from $\mc{H}$ by a \defn{juggle}.
\end{definition}

\begin{remark} By the lightbulb trick, the exact choice of $\alpha$ is immaterial; different choices produce isotopic surfaces.
\end{remark}

\begin{lemma}\label{lem: juggle}
Suppose that $\mc{J}$ is obtained from $\mc{H}$ by a juggle. Then the following hold:
\begin{enumerate}
\item $\mc{J}$ is an oriented generalized Heegaard surface;
\item If $\mc{H}$ is acyclic and $\mc{H}^-$ contains no nonseparating sphere, then $\mc{J}$ is acyclic and $\mc{J}^-$ contains no nonseparating sphere. In which case, any sequence of amalgamations of  $\mc{H}$ and $\mc{J}$ consistent with height functions  create  isotopic (oriented) Heegaard surfaces for $M$; and
\item If $\mc{H}$ is locally thin, then so is $\mc{J}$. 
\end{enumerate}
\end{lemma}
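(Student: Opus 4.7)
The plan is to prove each of the three claims in turn, handling the three cases of the juggle (namely $P \neq R$ with $R$ thick, $P \neq R$ with $P$ thick, and $P = R$ thick) uniformly where possible. Throughout, let $C$ be the compressionbody of $M \setminus \mc{H}$ containing $Q$ (so $Q_1, Q_2 \subset C$), and let $C'$ be the compressionbody on the opposite side of $P$ from $C$ along $\alpha$.

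For claim (1), I would verify directly that each component of $M \setminus \mc{J}$ is a compressionbody by tracking how the compressionbody structure of $M \setminus \mc{H}$ is modified by the juggle. Tubing $P$ to $Q_1$ along a subarc of $\alpha$ in $C$ adds a 1-handle to $P$; the resulting complementary region on the $Q$-side of $P(\alpha)$ is obtained from $C$ by removing a regular neighborhood of $Q_1$ together with the tube, which yields a compressionbody by a standard handle-slide analogous to the Monopod Lemma \ref{monopod lemma}. Adding the further tube producing $R(\alpha)$ passes through $C'$ and through the new pocket created by $P(\alpha)$, and a similar handle analysis verifies that the resulting complementary regions are compressionbodies with $\boundary_+$ and $\boundary_-$ inherited consistently. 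The case $P = R$ is treated analogously: the self-tube amounts to two successive 1-handle attachments, and the resulting complementary regions split into compressionbodies whose structure can be read off from Lemma \ref{subcompbodies}(3).

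For claim (2), acyclicity can be checked at the level of the dual digraph of Definition \ref{dual digraph}. The juggle modifies the dual digraph of $\mc{H}$ in a controlled way (it replaces a few edges by tubed versions carrying the inherited orientations), so any coherent cycle in the dual digraph of $\mc{J}$ either descends to one in the dual digraph of $\mc{H}$, or traverses an edge corresponding to one of the new thin components. Since tubing does not introduce any new thin sphere components beyond those already in $\mc{H}^-$, and since $\mc{H}^-$ contains no nonseparating sphere, the same is true of $\mc{J}^-$, and no new coherent cycle can arise. For the isotopy claim, I would exhibit a common refinement $\mc{K}$ of $\mc{H}$ and $\mc{J}$: namely, insert parallel copies of $Q_1, Q_2$ (alternating as thin and thick surfaces in tiny product collars) into the portion of $C$ used by the juggle so that $\mc{H}$ and $\mc{J}$ are each obtained from $\mc{K}$ by a short sequence of consolidations. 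Since consolidation preserves acyclicity (Lemma \ref{consol preserve acyclic}) and amalgamating across a product compressionbody is the identity operation, Corollary \ref{Lack thm} applied to $\mc{K}$ (with two compatible height functions, one extending a height function on $\mc{H}$ and one on $\mc{J}$) yields that the amalgamations of $\mc{H}$ and $\mc{J}$ are isotopic as oriented Heegaard surfaces for $M$.

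For claim (3), I would argue by contradiction that no thinning move applies to $\mc{J}$. A product compressionbody between a thick and thin component of $\mc{J}$ would, after removing the neighborhoods of the tubes introduced by the juggle, pull back to a product compressionbody between the corresponding components of $\mc{H}$, contradicting local thinness of $\mc{H}$. A weak reducing pair (or an s-weak reducing pair of type II) for a thick surface of $\mc{J}$ would, after isotoping the compressing or semi-compressing discs off the tubes attaching to the spheres $Q_i$ (using that spheres in a compressionbody are separating), yield a weak reducing pair for a thick surface of $\mc{H}$, again contradicting the hypothesis. The main obstacle is the isotopy portion of (2); constructing the refinement $\mc{K}$ requires particular care in the case $P = R$, where the self-tubing must be decomposed into two successive standard tubings so that one intermediate consolidation restores $\mc{H}$ and another restores $\mc{J}$.
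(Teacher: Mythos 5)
Parts (1) and (3) of your plan are serviceable in spirit (the paper gets both more cleanly by observing that, after capping $Q$ off with a ball, the juggle is just an isotopy of $P$ and $R$ guided by $\alpha$, so each compressionbody of $M\setminus\mc{J}$ differs from one of $M\setminus\mc{H}$ only by adding or deleting spherical negative boundary components, which affects neither compressing discs nor punctured-product structure). The genuine problems are in (2). For acyclicity, your dichotomy --- every coherent cycle in the dual digraph of $\mc{J}$ either descends to one for $\mc{H}$ or runs through a ``new thin component'' --- is unsupported, and since the juggle creates no new thin components it amounts to asserting exactly what must be proved. The digraph of $\mc{J}$ is obtained from that of $\mc{H}$ by sliding the $C$-endpoints of the edges dual to the spheres of $\boundary_- C_Q\cap\mc{H}^-$ across the edges dual to $P$ and $R$ (it is not an ``edge replacement''), and such slides can create coherent cycles: that is precisely the phenomenon Lemma \ref{don't drop the ball} exists to handle. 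The reason no cycle appears under your hypothesis is that each slid edge is dual to a \emph{separating} sphere and hence is a separating edge of the digraph, so it cannot lie on any cycle; this observation is the actual content, and it is missing from your argument.

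The isotopy claim in (2) is where your approach would fail outright. Consolidation only deletes a thick/thin pair cobounding a product; it never alters a component. So any generalized Heegaard surface obtained from your proposed refinement $\mc{K}$ (namely $\mc{H}$ with parallel copies of $Q_1,Q_2$ inserted in product collars) by consolidations has all of its components isotopic to components of $\mc{H}$ or to copies of $Q$. But when $\boundary_- C_Q\cap\mc{H}^-\neq\nil$ (the only nontrivial case), $P(\alpha)$ and $R(\alpha)$ are \emph{not} isotopic to $P$, $R$, or a copy of $Q$: tubing to $Q_1$ or $Q_2$ changes the isotopy class exactly because $Q$ bounds a punctured ball, not a ball, on the $C_Q$ side --- moving those punctures is the whole point of the juggle. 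Hence no $\mc{K}$ of the proposed form consolidates to both $\mc{H}$ and $\mc{J}$, and if you enlarge $\mc{K}$ to contain the tubed surfaces as well, its existence as a generalized Heegaard surface is itself the hard step (note also that $R(\alpha)$ meets $P$ in a meridian of its tube, so the four surfaces are not even disjoint as constructed). The paper avoids all of this by choosing a height function adapted to the subgraphs $\Gamma_i$ obtained by cutting the digraph along the sphere edges, and amalgamating $\mc{H}$ and $\mc{J}$ in a matching order so that the two processes attach the same handles in the same order; Corollary \ref{Lack thm} then removes the dependence on these choices. You would need either to reproduce an argument of that kind or to genuinely construct (and verify) a much more elaborate common refinement.
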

\begin{proof}
Use the notation of Definition \ref{def: juggle}. Let $C$ be the compressionbody containing $Q$. The sphere $Q$ divides $C$ into two compressionbodies. One has $\boundary_+ C$ as its positive boundary and contains $Q$ in its negative boundary. The other $C_Q \subset C$ has $\boundary_+ C_Q = Q$.  Note that $\boundary_- C_Q \subset \boundary_- C$ is the union of spheres. Note also that $P \subset \boundary C$ is not contained in $C_Q$. Observe that if we cut $M$ open along $Q$ and paste in a 3-ball to the corresponding boundary component of $C$, the arc $\alpha$ guides an isotopy of both $P$ and $R$. Thus, each compressionbody of $M\setminus \mc{J}$ is either identical to a compressionbody of $M\setminus \mc{H}$, is obtained from $C$ by attaching a 3-ball to $Q$, or is obtained from a compressionbody of $M\setminus \mc{H}$ by removing a 3-ball from the interior. It follows that $\mc{J}$ is a generalized Heegaard surface. The fact that $\alpha$ intersects $\mc{H}$ exactly once in its interior means that the orientation on $\mc{H}$ induces an orientation on $\mc{J}$. This is Conclusion (1).

To prove (2), assume that every component of $\boundary_- C_Q \cap \mc{H}^-$ is separating. Consider the effect of the juggle on the dual digraph $\Gamma$ to $\mc{H}$. Let $\Gamma'$ be the dual digraph to $\mc{J}$. If $P = R$ or if $\boundary_- C_Q \cap \mc{H}^- = \nil$, then $\Gamma$ and $\Gamma'$ are isomorphic; in particular, $\Gamma'$ is acyclic.  Suppose, therefore, that $P \neq R$ and that $\boundary_- C_Q \cap \mc{H}^- \neq \nil$. One of $P, R$ is a thin surface and the other is a thick surface. Let $A$ be the compressionbody containing $P$ be on the opposite side of $P$ from $C$ and let $B$ be the compressionbody containing $R$ on the opposite side of $R$ from $A$. The effect of the juggle is effectively to move $\boundary_- C_Q$ from the compressionbody $C$ to the compressionbody $B$. Thus, any edge of $\Gamma$ dual to a component of $\boundary_- C_Q \cap \mc{H}^-$ has its end at $C$ slid across the edges dual to $P$ and $R$ to have that end at $B$. By our assumption, each of the edges of $\Gamma$ corresponding to those components separates $\Gamma$. Hence, this slide of endpoints from $C$ to $B$ preserves the fact that the dual digraph is acyclic.

We now explain why amalgamating consistently with the height functions produces isotopic oriented Heegaard surfaces for $M$. Let $F_1, \hdots, F_n$ be the components of $\boundary_- C_Q \cap \mc{H}^-$. For all $i$, the edge of $\Gamma$ corresponding to $F_i$ separates $\Gamma$ into two connected subgraphs.  Let $\Gamma_i$ be the subgraph of $\Gamma \setminus F_i$ not containing the vertex $C$. Since (by our assumption) each of the $F_i$ is a separating sphere, the subgraphs $\Gamma_i$ are all distinct.

Without loss of generality, assume that the orientation on $P$ points into $C$. (If instead it points out of $C$, it is straightforward to adapt the following argument.) We can choose a height function $f$ for $\mc{H}$ such that there are $m_0, m_1, \hdots, m_n$ with $m_0 = -1$ and $m_1, \hdots, m_n$ positive integers such that $f(F_i) = m_i$ and if $S$ is an edge of $\Gamma_i$ then $f(S) \in (m_{i-1}, m_i) \subset \N$. Then $f$ is also a height function on $\mc{J}$. Amalgamate $\mc{H}$ and $\mc{J}$ to Heegaard surfaces in such a way that we respect the height function $f$. Do this by amalgamating the thick surfaces in each $\Gamma_i$ in the same order and consecutively for $i = 1, \hdots, n$. Then amalgamate the thick surfaces not in any $\Gamma_i$ in the same order. Finally, perform the remaining amalgamations in the same order. We see that the resulting Heegaard surfaces are isotopic since, in terms of the induced handle structure on $M$, we are performing the same handle attachments in the same order. This proves Conclusion (2).

Finally, irrespective of whether or not $\mc{H}^-$ contains nonseparating spheres, we prove (3). Suppose that $\mc{H}$ is locally thin. Up to homeomorphism preserving the $\boundary_\pm$ designations, the difference between the compressionbodies of $M\setminus \mc{H}$ and the compressionbodies of $M\setminus \mc{J}$ is either removing spherical negative boundary components by capping them with 3-balls, or introducing spherical negative boundary components by removing open 3-balls from the interior of the compressionbody. These operations do not affect whether or not a compressionbody is a punctured product compressionbody and they do not affect which curves in a thick surface bound compressing discs. Thus, local thinness is preserved.
\end{proof}

We now address the situation when juggling produces coherent cycles in the dual digraph. 

\begin{lemma}\label{don't drop the ball}
Suppose that $\mc{H}$ is an oriented generalized Heegaard surface such that each coherent cycle in its dual digraph contains an edge corresponding to a thin sphere (necessarily nonseparating). Let $\mc{J}$ be obtained from $\mc{H}$ by a juggle. Then $\mc{J}$ is an oriented generalized Heegaard surface such that each coherent cycle in its dual digraph contains an edge corresponding to a thin sphere. Furthermore, if $H$ and $J$ are Heegaard surfaces amalgamation-obtained from $\mc{H}$ and $\mc{J}$ respectively, then $H$ and $J$ are isotopic.
\end{lemma}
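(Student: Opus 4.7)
The first claim that $\mc{J}$ is an oriented generalized Heegaard surface is immediate from Lemma \ref{lem: juggle}(1). For the cycle statement, I would examine how the juggle modifies the dual digraph. From the proof of Lemma \ref{lem: juggle} the only changes are: (i) the edges corresponding to the components of $\boundary_- C_Q \cap \mc{H}^-$---all of which are thin spheres because $\boundary_- C_Q$ is a union of spheres---have their endpoints at the vertex for $C$ slid along the edges for $P$ and $R$ to the vertex for $B$; and (ii) the edges for $P$ and $R$ are relabelled as $P(\alpha)$ and $R(\alpha)$ with identical endpoints and orientations under the natural identification of the compressionbodies before and after the juggle. Thus any coherent cycle $\gamma'$ in the dual digraph of $\mc{J}$ either traverses a slid edge, in which case $\gamma'$ already contains a thin sphere edge; or uses only unchanged edges, in which case $\gamma'$ corresponds to a coherent cycle in the dual digraph of $\mc{H}$ and contains a thin sphere edge by hypothesis.

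For the isotopy of $H$ and $J$, the plan is to produce a minimal complete nonseparating collection of spheres $P^*$ serving both $\mc{H}$ and $\mc{J}$ that is disjoint from the arc $\alpha$ and sphere $Q$. Since $\alpha$ meets $\mc{H}$ in a single interior point lying on $P$, disjointness amounts to arranging $P \notin P^*$; if $P$ is not a thin sphere this is automatic, and if $P$ is a nonseparating thin sphere I would invoke the uniqueness statement of the preceding proposition to exchange a feedback edge set containing $P$ for one that avoids $P$, replacing $P$ by another cycle-breaking nonseparating thin sphere without altering the amalgamation-obtained Heegaard surface up to isotopy.

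Once such a $P^*$ is fixed, the juggle restricts to a juggle on the acyclic $\mc{H} \setminus P^*$ in $M \setminus P^*$ whose output is $\mc{J} \setminus P^*$; both $\mc{H} \setminus P^*$ and $\mc{J} \setminus P^*$ are acyclic and contain no nonseparating thin sphere, so Lemma \ref{lem: juggle}(2) gives isotopic amalgamation-obtained Heegaard surfaces $H_0$ and $J_0$ for $M \setminus P^*$. Spherical self-amalgamation along $P^*$ as in Definition \ref{def: amalg-ob} then produces isotopic $H$ and $J$. The principal technical obstacle is the existence of a common $P^*$ avoiding $P$: it relies on the hypothesis that every coherent cycle contains a thin sphere, providing enough flexibility in choosing feedback edge sets of nonseparating thin spheres to exclude any specific sphere, and may additionally require a reordering of the amalgamation sequence to handle the edge case in which every cycle-breaking exchange appears to force $P \in P^*$.
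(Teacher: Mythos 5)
Your handling of the first assertion is correct and is essentially the paper's argument: the only digraph changes are the slid $F_i$-edges (all thin spheres, since $\boundary_- C_Q$ is a union of spheres) and the relabelling of the $P$- and $R$-edges, so any coherent cycle of the dual digraph of $\mc{J}$ either crosses a slid edge or descends from a coherent cycle of $\mc{H}$. The paper phrases this as a case split on the orientation of $P$, but the content is the same.

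The isotopy claim is where your proposal has a genuine gap, and it is exactly the hard case of the lemma. Your reduction rests on two unjustified assumptions. First, a complete nonseparating collection $P^*$ avoiding $P$ need not exist: the hypothesis only says every coherent cycle contains \emph{some} thin sphere, and if some coherent cycle has $P$ as its only thin-sphere edge then every complete collection must contain $P$. The uniqueness statement of the proposition says the amalgamation-obtained surface is independent of the choice of collection; it does not provide an exchange that removes a sphere which every collection is forced to contain. Second, and more seriously, your appeal to Lemma \ref{lem: juggle}(2) inside $M\setminus P^*$ needs $(\mc{H}\setminus P^*)^-$ to contain \emph{no nonseparating sphere}, but Definition \ref{def: amalg-ob} only guarantees that $\mc{H}\setminus P^*$ is acyclic; thin spheres lying on non-coherent (undirected) cycles stay nonseparating in $M\setminus P^*$. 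If such a sphere $F_i\subset \boundary_- C_Q$ survives and the orientation of $P$ points into $C$, the juggle slides the $F_i$-edges \emph{against} the orientations of the $P$- and $R$-edges and can create brand-new coherent cycles; then $\mc{J}\setminus P^*$ is not acyclic, $P^*$ is not a complete collection for $\mc{J}$, and your claim that the juggle ``restricts'' with both sides acyclic and sphere-free simply fails. The paper's proof confronts this directly: it selects one sphere $F_i$ for each newly created cycle (their union $\mc{F}$), uses $P_H$ for $\mc{H}$ and $P_H\cup\mc{F}$ for $\mc{J}$, amalgamates the common remainder $\mc{H}\setminus(P_H\cup\mc{F})=\mc{J}\setminus(P_H\cup\mc{F})$ to a surface $L$, and then observes that $H$ (amalgamation across $\mc{F}$ followed by spherical self-amalgamation along $P_H$) and $J$ (spherical self-amalgamation along $P_H\cup\mc{F}$) are both the connected sum of $L$ with $|P_H\cup\mc{F}|$ copies of the genus one splitting of $S^2\times S^1$, hence isotopic. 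Some comparison of this kind, between amalgamating across a nonseparating thin sphere and self-amalgamating across it, is unavoidable and is absent from your proposal; the closing remark about ``reordering the amalgamation sequence'' does not supply it.
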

\begin{proof}
Use the notation of Definition \ref{def: juggle}. The sphere $Q$ that is used to define the juggle is disjoint from $\mc{H}$. It is contained in a compressionbody $C \subset M\setminus \mc{H}$ and in $C$ bounds a compressionbody $C_Q$ such that $Q = \boundary_+ C_Q$. As before, let $F_1, \hdots, F_n \subset \boundary_- C_Q \cap \mc{H}^-$, each is a sphere. If none are separating, the result follows immediately from Lemma \ref{lem: juggle}, so assume at least one is nonseparating. If the orientation on $P$ points out of $C$, then the edges of the dual digraph $\Gamma$ for $\mc{H}$ corresponding to each $F_i$ point into the vertex $C$. The effect on the juggle on $\Gamma$ is to take the tips of those edges and slide them in the direction of the edges corresponding to $P$ and $R$. Any coherent cycle in $\Gamma$ after such a slide corresponds to a coherent cycle prior to the slide. Consequently, every coherent cycle for the dual digraph to $\mc{J}$ contains an edge corresponding to a thin sphere. This holds even if $P$ or $R$ is a thin sphere, since some $F_i$ is nonseparating. Similar to the proof of  Lemma \ref{lem: juggle}, it is straightforward to see that the Heegaard surfaces amalgamation-obtained from $\mc{H}$ and $\mc{J}$ are isotopic.

Suppose, therefore, that the orientation for $P$ points into $C$. In this case, the edges of the dual digraph $\Gamma$ for $\mc{H}$ corresponding to each $F_i$ point out of the vertex $C$. The effect on the juggle on $\Gamma$ is to take the tips of those edges and slide them \emph{opposite to} the direction of the edges corresponding to $P$ and $R$. This can create cycles in the dual digraph, but each cycle created will contain an edge corresponding to a nonseparating $F_i$. Additionally, any coherent cycle in $\Gamma$ prior to the juggle, survives as a coherent cycle after the juggle, possibly with the addition of the edges corresponding to $P$ and $R$. 

Let $\mc{F}$ be the union of those $F_i$ that belong to a coherent cycle in the dual digraph to $\mc{J}$ but not in $\mc{H}$; however if more than one $F_i$ belong to the same cycle include only one from each cycle. Let $P_H$ be a minimal complete nonseparating collection of spheres in $\mc{H}^-$ and expand that to a minimal complete nonseparating collection of spheres in $\mc{J}^-$ by adding in $\mc{F}$. Choose a height function $f$ on $\mc{H}\setminus P_H$ and let $f'$ be its restriction to $\mc{H}' = \mc{H} \setminus (P_H \cup \mc{F}) = \mc{J} \setminus (P_H \cup \mc{F})$. We may amalgamate $\mc{H}'$ consistent to the height function to a Heegaard surface $L$. The Heegaard surface $H$ is obtained by amalgamating across $\mc{F}$ and spherical-self amalgamation across $P_H$. The Heegaard surface $J$ is obtained by spherical self-amalgamation across $P_H \cup \mc{F}$. However, these are simply two ways of forming the connected sum of $L$ with $|P_H \cup \mc{F}|$ copies of the genus 1 Heegaard surface of $S^2 \times S^1$. Thus, $H$ and $J$ are isotopic.
\end{proof}

\begin{corollary}\label{disjoint spheres}
Suppose that $H$ is a Heegaard surface for $M$ and that $\mc{S}$ is the union of pairwise disjoint essential spheres and discs in $M$. Then there exists an oriented generalized Heegaard surface $\mc{H}$ for $M$ such that:
\begin{enumerate}
\item Every coherent cycle in the dual digraph to $\mc{H}$ contains an edge corresponding to a thin sphere;
\item $H$ is, up to isotopy, amalgamation-obtained from $\mc{H}$;
\item Each sphere component of $\mc{S}$ is disjoint from $\mc{H}$;
\item Each disc component of $\mc{S}$ is disjoint from $\mc{H}^-$ and intersects $\mc{H}^+$ in a single simple closed curve.
\end{enumerate}
\end{corollary}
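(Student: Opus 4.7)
The plan is to combine thinning, isotopy exploiting incompressibility and strong irreducibility, and juggling to relocate obstructive thin spheres.

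I begin by applying Theorem \ref{locally thin} to thin $H$ to an acyclic, oriented, locally thin generalized Heegaard surface $\mc{H}_0$; by Corollary \ref{Lack thm}, $H$ is amalgamation-obtained from $\mc{H}_0$. The components of $\mc{H}_0^-$ are incompressible and the components of $\mc{H}_0^+$ are strongly irreducible. Putting $\mc{S}$ transverse to $\mc{H}_0$ and applying standard innermost-disc arguments against the incompressibility of $\mc{H}_0^-$, I isotope $\mc{S}$ to be entirely disjoint from $\mc{H}_0^-$. Each component of $\mc{S}$ then lies in a single component $W$ of $M\setminus \mc{H}_0^-$, where the restriction $H_W = \mc{H}_0^+\cap W$ is a strongly irreducible Heegaard surface for $W$. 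Isotope off sphere components of $\mc{S}$ parallel in $W$ to thin spheres of $\mc{H}_0^-$ (these become disjoint from $\mc{H}$) and apply Lemma \ref{SISHL} to the remaining essential components of $\mc{S}$ in each $W$: each sphere of $\mc{S}$ meets $H_W$ only in curves inessential in $H_W$, and each disc meets $H_W$ in at most one essential curve together with possibly inessential ones. An essential disc cannot lie in a single compressionbody $C$ (else its boundary would bound a disc in $\boundary_- C \subset \boundary M$ by incompressibility of $\boundary_- C$, contradicting essentiality of the disc in $M$), so exactly one essential curve occurs on each disc.

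To eliminate the remaining inessential intersections, I inductively pick an innermost inessential curve $\gamma$ on some $S_0\in \mc{S}$. By Lemma \ref{SISHL}, $\gamma$ bounds a semi-compressing disc $E\subset S_0$ whose interior is disjoint from $\mc{H}$. It also bounds a disc $F\subset H_W$, and the sphere $E\cup F$ cobounds a submanifold $B$ in the compressionbody $C$ on the $E$-side of $H_W$; this $B$ is a 3-ball with finitely many open 3-balls removed, where each removed open ball has boundary a thin sphere of $\mc{H}_0^-$ lying in $\boundary_- C$. For each such thin sphere $P$, I perform a juggle (Definition \ref{def: juggle}) with $R = P$ and an arc $\alpha$ starting on $P$, entering $B$, crossing $H_W$ transversally once through $F$, and ending at a small sphere $Q\subset M\setminus \mc{H}$ placed on the opposite side of $H_W$. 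This relocates $P$ across $H_W$, so after all such juggles $B$ is a genuine 3-ball. I then isotope $H_W$ across $B$ from $F$ to a pushoff of $E$; this eliminates $\gamma$, together with any other intersections of $\mc{S}$ contained in $F$, while creating none. Iterating terminates.

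For the bookkeeping: $\mc{H}_0$ is acyclic, so vacuously every coherent cycle in its dual digraph contains a thin sphere. By Lemmas \ref{lem: juggle} and \ref{don't drop the ball}, each juggle preserves this property and keeps the amalgamation-obtained Heegaard surface isotopic to $H$. Ambient isotopy of a thick surface clearly preserves these properties. After finitely many iterations the resulting oriented generalized Heegaard surface $\mc{H}$ satisfies all four conclusions. The main obstacle is that juggles may introduce coherent cycles via nonseparating thin spheres; this is precisely the issue that Definition \ref{def: amalg-ob} (allowing spherical self-amalgamation) and Lemma \ref{don't drop the ball} (guaranteeing the amalgamation-obtained Heegaard surface stays well-defined and unchanged under juggling) were set up to handle.
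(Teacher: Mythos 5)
Your overall strategy (thin $H$ to a locally thin $\mc{H}_0$, then clear intersections with $\mc{S}$ using juggles, relying on Lemma \ref{don't drop the ball} and Corollary \ref{Lack thm} for conclusions (1) and (2)) is the same as the paper's, but there are two genuine gaps at the geometric heart of the argument. First, the step you dismiss as ``standard innermost-disc arguments against the incompressibility of $\mc{H}_0^-$'' is precisely the hard part. Incompressibility only tells you that an innermost curve of $\mc{S}\cap\mc{H}_0^-$ bounds a disc $E$ in a thin component $F$; to cancel the curve by an isotopy you need the sphere formed by $E$ and the innermost disc $D\subset\mc{S}$ to bound a ball whose interior misses $\mc{H}$, and this is exactly what can fail: $D$ is in general a \emph{semi-compressing} disc for $F$, and by Lemma \ref{thin s-discs} the sphere $D\cup E$ only cuts off a \emph{punctured} ball whose punctures are thin spheres (this is unavoidable, since $M$ is reducible). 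Removing such an intersection is what the paper's Claim 1 (Cases 1a and 1b) does, and it requires juggling those interior thin spheres across surfaces in an \emph{adjacent} component of $M\setminus\mc{H}^-$, possibly after an isotopy that temporarily increases intersections of $\mc{H}^+$ with $\mc{S}$ elsewhere; that is why the paper runs a global minimality argument on $|\mc{H}\cap\mc{S}|$ over all juggles and isotopies rather than the monotone induction you propose.

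Second, the juggle you do perform is set up backwards. In Definition \ref{def: juggle} the spheres that get relocated are the components of $\boundary_- C_Q$, where $C_Q$ is cut off by $Q$ in the compressionbody containing $Q$; they are carried toward the far side of $R$. In your move $Q$ is a small trivial sphere on the far side of $H_W$, so $\boundary_- C_Q=\nil$ and, by the proof of Lemma \ref{lem: juggle}, the dual digraph (hence the distribution of thin spheres among compressionbodies) is unchanged: geometrically the thin sphere merely acquires a finger running inside the new tube of $H_W$, and it remains inside the region bounded by $E$ and the disc of the new thick surface bounded by $\gamma$. So after your juggles $B$ is still a punctured ball and the isotopy of $H_W$ across it ``from $F$ to a pushoff of $E$'' does not exist. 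The correct configuration, as in the paper's Cases 1a/1b, takes $Q$ to be a pushoff of the sphere $E\cup F$ \emph{into} the punctured ball (so that the obstructing thin spheres are exactly $\boundary_- C_Q$) and runs the arc from the other side of the thick surface (or from a thin surface beyond it), crossing $\mc{H}$ exactly once through the disc bounded by $\gamma$; that juggle transports the obstructions out, after which the innermost disc is no longer semi-compressing and an isotopy removes the curve. With these two corrections your argument essentially becomes the paper's proof, including its minimality bookkeeping in place of ``iterating terminates.''
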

\begin{proof}
Let $\mc{H}'$ be a locally thin, acyclic, oriented generalized Heegaard surface obtained by thinning $H$. Such exists by Theorem \ref{locally thin}. By Corollary \ref{Lack thm}, $H$ is, up to isotopy, amalgamation-obtained from $\mc{H}'$. Out of all generalized Heegaard surfaces obtained from $\mc{H}'$ by a sequence of juggles and isotopies, let $\mc{H}$ be one which minimizes $|\mc{H} \cap \mc{S}|$. By Lemma \ref{don't drop the ball}, every coherent cycle in the dual digraph to $\mc{H}$ contains a thin sphere and, up to isotopy, $H$ is amalgamation-obtained from $\mc{H}$. We claim that $\mc{H}$ satisfies (3) and (4).

\textbf{Claim 1:}  $\mc{S}$ is disjoint from $\mc{H}^-$ .

Suppose not. Consider an innermost curve $\zeta \subset \mc{S} \cap \mc{H}^-$ that bounds a disc $D \subset \mc{S}$ with interior disjoint from $\mc{H}^-$. Since $\mc{H}^-$ is incompressible, $D$ must be a semi-compressing disc for a component $F \subset \mc{H}^-$. Let $W \subset M\setminus \mc{H}^-$ be the component containing $D$ and let $J = \mc{H}^+ \cap W$. As $\mc{H}$ is locally thin, by Lemma \ref{SISHL}, $J$ can be isotoped so that each component of $J \cap D$ is inessential in $D$ and each innermost disc in $D$ is a semi-compressing disc for $J$. 

\textbf{Case 1a:} $J \cap D = \nil$

In this case, by Lemma \ref{thin s-discs}, the union of $D$ with a disc in $F$ is a sphere $Q$, which we can push slightly into the punctured 3-ball $B$ in $W$ bounded by $F \cup D$ and disjoint from $J$.

Let $A \subset M\setminus \mc{H}$ be the compressionbody on the opposite side of $F$ from $W$. Let $W' \subset M\setminus \mc{H}^-$ be the component containing $A$. In $W'$ we may perform an isotopy of the surface $H' = \mc{H}^+ \cap W'$ so that it is still transverse to $\mc{S}$ and so that there is a path from $H'$ to $Q$ which is disjoint from $\mc{S}$. (This isotopy may increase the number of intersections between $H'$ and $\mc{S}$.) Using this path, perform a juggle of $F$ and $H'$ over $Q$. After this juggle, we have not changed $|\mc{H}^- \cap \mc{S}|$, but the disc $D$ is now no longer a semi-compressing disc. We may therefore isotope $\mc{H}$ to eliminate it without increasing $|\mc{H}^- \cap \mc{S}|$. This contradicts our choice of $\mc{H}$.

\textbf{Case 1b:} $J \cap D \neq \nil$

Consider an innermost disc $E \subset D$ whose boundary is a component of $J \cap D$ that is innermost in $D$. As each component of $J \cap D$ is inessential in $J$, the union of $D$ with a disc $E \subset H$ is a sphere $Q$ bounding a punctured 3-ball in $M$ with interior disjoint from $J\setminus E$. Push $Q$ slightly into this 3-ball. As the interior of $D$ is disjoint from $\mc{H}^-$, there is an embedded path $\alpha$ with one endpoint on either $F$ or $J\setminus E$, one endpoint on $Q$, with interior intersecting $J$ in a single point contained in $E$, and which is otherwise disjoint from $\mc{H} \cup \mc{S}$. Let $R = J$. If $\alpha$ has an endpoint on $F$, let $P = F$; if it has an endpoint on $J$, let $P = J$. Use this path to perform a juggle of $R$ and $P$ over $Q$, as in Definition \ref{def: juggle}. After the juggle, $|\mc{H} \cap \mc{S}|$ is unchanged, but $D$ is no longer a semi-compressing disc. An isotopy of $J$ therefore removes it, decreasing $|J \cap \mc{S}|$. Repeating as necessary, we make $J$ disjoint from $D$. By Case 1a, we therefore contradict our choice of $\mc{H}$.

We conclude that $\mc{S}$ is disjoint from $\mc{H}^-$.  The intersection of $\mc{S}$ with each component of $M \setminus \mc{H}^-$ is therefore the union of components of $\mc{S}$. Let $W \subset M\setminus \mc{H}^-$ be a component and let $J = \mc{H}^+ \cap W$. By Lemma \ref{SISHL}, we may isotope $J$ in $W$ so that for each component $S_0 \subset \mc{S}$, either:
\begin{itemize}
\item $S_0$ is a sphere and $J \cap S_0$ is the union of curves that are inessential in $J$ and the innermost such curves on $S_0$ bound semi-compressing discs for $J$. 
\item $S_0$ is a compressing disc for $\boundary M$ and the compressionbody of $W\setminus J$ containing $\boundary S_0$ is a punctured product compressionbody. The intersection between $S_0$ and $J$ consists of curves that are inessential in $J$, the innermost ones on $S_0$ bound semi-compressing discs for $J$, and exactly one essential simple closed curve on $J$. That curve bounds a compressing disc for $J$ in the compressionbody on the opposite side of $J$ from that containing $\boundary S_0$. 
\end{itemize}

Suppose that some component of $\mc{S} \cap J$ is inessential in $J$. As in Case (1b) above, we may peform juggles, to eliminate those curves of intersection without increasing $|\mc{H} \cap \mc{S}|$. Consequently, we may assume that each component of $\mc{S} \cap W$ is either a sphere disjoint from $J$ or a compressing disc for $\boundary M$ intersecting $J$ in a single simple closed curve, essential on $J$. This again contradicts our choice of $\mc{H}$, and so $\mc{H}$ must satisfy (3) and (4).
\end{proof}

\section{The Strong Haken Theorem}\label{strong haken}

The remainder of the paper is concerned with the proof of the Strong Haken Theorem. The bulk of the work lies in being careful how we amalgamate. For simplicity, assume that $\boundary M$ does not have any spheres; the general case can be relatively easily obtained from this one and is covered in both \cite{Scharlemann} and \cite{HeSc}

\begin{theorem}[Strong Haken]
Suppose that $H$ is a Heegaard surface for $M$, a compact, connected, orientable 3-manifold. Let $\mc{S} \subset M$ be a collection of pairwise disjoint essential spheres and discs. Then $H$ can be isotoped so that for each component $S_0 \subset \mc{S}$, $H \cap S_0$ is a single simple closed curve.  
\end{theorem}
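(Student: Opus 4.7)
The plan is to start from Corollary \ref{disjoint spheres}, which delivers an oriented generalized Heegaard surface $\mc{H}$ from which $H$ is, up to isotopy, amalgamation-obtained, and which is well-positioned with respect to $\mc{S}$: each sphere of $\mc{S}$ is disjoint from $\mc{H}$, each disc of $\mc{S}$ is disjoint from $\mc{H}^-$ and meets $\mc{H}^+$ in a single essential simple closed curve, and each coherent cycle of the dual digraph carries a thin sphere. I will then carefully amalgamate $\mc{H}$ to a Heegaard surface $J$ for which $J \cap S_0$ is a single simple closed curve for every $S_0 \subset \mc{S}$, and invoke the uniqueness of amalgamation-obtained Heegaard surfaces (the proposition at the end of Section \ref{juggling}) to conclude that $J$ is ambient isotopic to $H$; the inverse isotopy then realizes the desired intersection pattern on $H$.

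The disc components of $\mc{S}$ are easy to handle. A disc $S_0 \in \mc{S}$ meets each compressionbody $C$ of $M\setminus\mc{H}$ it enters either in a vertical annulus to $\boundary M$ or as a compressing disc for $\boundary_+ C$. The Monopod Lemma (Lemma \ref{monopod lemma}) lets me choose a complete disc system for $C$ disjoint from those annuli and discs, so the amalgamation tubes will be disjoint from $S_0$, and the single essential curve $S_0 \cap \mc{H}^+$ will be preserved as $S_0 \cap J$.

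The sphere case requires more effort, and this is where the main obstacle lies. A sphere $S_0 \in \mc{S}$ sits in the interior of some compressionbody $C_{S_0}$ and separates it into a piece $W_1$ not containing $\boundary_+ C_{S_0}$ and a piece $W_2$ containing $\boundary_+ C_{S_0}$. Because $S_0$ is essential in $M$ and no component of $\boundary M$ is a sphere, $W_1$ is a punctured ball compressionbody whose $m \geq 1$ negative boundary components are all thin spheres of $\mc{H}^-$. Under a Monopod-style amalgamation each such thin sphere forces a tube through $S_0$, yielding $m$ circles of intersection in $J \cap S_0$. To obtain a single circle I need to arrange $m = 1$. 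My plan is to achieve this by further juggling: for each extra thin sphere $P \subset W_1$, let $Q$ be a small push-off of $P$ into the adjacent compressionbody, take $R = \boundary_+ C_{S_0}$, and juggle along a path $\alpha$ from $R$ to $Q$ crossing $P$ once. Lemma \ref{don't drop the ball} ensures that the amalgamation-obtained Heegaard surface is unchanged up to isotopy. The delicate issue is that $\alpha$ must cross $S_0$, so the juggle introduces an inessential circle of $\mc{H}^+ \cap S_0$; that circle bounds a disc in the sphere $Q_2$ used in the juggle, so a cleanup juggle modelled on the innermost-disc moves in the proof of Corollary \ref{disjoint spheres} removes the intersection, and choosing the cleanup sphere in a compressionbody disjoint from the region where $P$ was relocated prevents $P$ from being dragged back into $W_1$. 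Inducting on $\sum_{S_0}(m_{S_0}-1)$ achieves $m_{S_0} = 1$ for every sphere component simultaneously, while maintaining all conclusions of Corollary \ref{disjoint spheres}.

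Once $m_{S_0} = 1$ for every sphere $S_0$, the amalgamation proceeds cleanly. In each compressionbody $C$ of $M \setminus \mc{H}$, choose a complete disc system via the Monopod Lemma disjoint from the disc components of $\mc{S} \cap C$ and from the sphere components of $\mc{S} \cap C$, using that each such sphere is parallel, in a product compressionbody piece of $C|_\Delta$, to the unique inner thin sphere. Amalgamate consistent with a height function on $\mc{H} \setminus P$ for a minimal complete nonseparating collection of thin spheres $P$, and then perform spherical self-amalgamation across $P$, producing $J$. Corollary \ref{amalg intersect} gives $|J \cap F_0| = 1$ for each component $F_0$ of $\mc{H}^-$, and by construction the unique tube through each sphere $S_0 \in \mc{S}$ meets $S_0$ in a single circle, while the single essential curve on $\mc{H}^+$ persists for each disc $S_0 \in \mc{S}$. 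The uniqueness proposition then identifies $J$ with $H$ up to ambient isotopy, completing the proof.
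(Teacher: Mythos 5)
Your skeleton (start from Corollary \ref{disjoint spheres}, amalgamate carefully using the Monopod Lemma, then invoke the uniqueness of the amalgamation-obtained Heegaard surface) is the paper's skeleton, and your treatment of the disc components of $\mc{S}$ is fine. The gap is in the sphere case, at exactly the step you flag as delicate: the claim that juggling lets you arrange $m_{S_0}=1$ for every sphere $S_0\subset\mc{S}$. First, the specific juggle you describe is not a legitimate instance of Definition \ref{def: juggle}: if $Q$ is a push-off of the thin sphere $P$ into the compressionbody $C'$ on the other side of $P$ from $C_{S_0}$, and $\alpha$ runs from $R=\boundary_+C_{S_0}$ across $P$ to $Q$, then $\alpha$ necessarily approaches $Q$ from inside the product region between $P$ and $Q$, so $\boundary_- C_Q=P$; this is precisely the configuration ruled out in the proof of Lemma \ref{lem: juggle} (``$P\subset\boundary C$ is not contained in $C_Q$''). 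Carrying out the construction literally in this configuration does not yield a generalized Heegaard surface: one complementary region is a $3$-ball bounded entirely by the new ``thin'' sphere $P(\alpha)$, and the region reached through the tube of $R(\alpha)$ has both $R(\alpha)$ and $\boundary_+C'$ in its boundary. So Lemma \ref{don't drop the ball} cannot be applied to it.

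Second, and more fundamentally, even a corrected juggle (e.g.\ the $P=R$ version with $Q$ a push-off of the thin sphere into $C_{S_0}$) cannot do what you want: a juggle never moves the enclosed thin spheres in $M$; it only re-routes which compressionbody they bound by drilling a tube from a thick surface, and since $S_0$ separates $W_1$ from $\boundary_+C_{S_0}$ (and from every other thick surface), any such tube must puncture $S_0$. The resulting meridian circles bound, on the new thick surface, the cap of the tube, and on $S_0$ discs whose union with those caps cuts off regions containing the very thin spheres you are trying to evict; hence they are semi-compressing, cannot be removed by isotopy, and a cleanup juggle over such a sphere transports exactly those thin spheres again. Nothing in your argument shows that $\sum_{S_0}(m_{S_0}-1)$ ever decreases; the asserted induction is the entire content of the theorem at this point, and it is doubtful the normalization $m_{S_0}=1$ is achievable at all while retaining conditions (1)--(4) of Corollary \ref{disjoint spheres}. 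The paper avoids needing it: it takes $\mc{H}$ with fewest components in a class $\mathbb{H}$ whose condition (3) permits an $\mc{S}$-sphere to meet $\mc{H}^+$ in one essential circle, amalgamates with Monopod discs so that each of the $m$ tubes meets $S_0$ once, and then isotopes the amalgamated thick surface along the vertical discs $X$ of Figure \ref{fig: multitube} to band the $m$ intersection circles into a single essential circle, contradicting component-minimality (with the spherical self-amalgamation stage handled the same way). To repair your proof you would need either a genuine proof of the $m_{S_0}=1$ normalization or, following the paper, a post-amalgamation isotopy merging the several circles of $J\cap S_0$ into one.
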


Let $\mathbb{H}$ be the set of oriented generalized Heegaard surfaces for $M$ such that:
\begin{enumerate}
\item Each coherent cycle in $\mc{H}$ contains an edge corresponding to a thin sphere
\item $H$ is, up to isotopy, amalgamation-obtained from $\mc{H}$
\item Each sphere component of $\mc{S}$ is disjoint from $\mc{H}^-$ and is either disjoint from $\mc{H}^+$ or intersects $\mc{H}^+$ in a single simple closed curve that is essential on $\mc{H}^+$.
\item Each disc component of $\mc{S}$ is disjoint from $\mc{H}^-$ and intersects $\mc{H}^+$ in a single simple closed curve that is essential on $\mc{H}^+$.
\item Each compressionbody $C \subset M\setminus \mc{H}$ contains a collection of s-discs $\Delta_C$ such that after $\boundary$-reducing $C$ along $\Delta_C$, each product compressionbody component contains a single scar and each component of $\mc{S} \cap \mc{H}$ is disjoint from that scar. 
\end{enumerate}
By Corollary \ref{disjoint spheres}, there is an $\mc{H}$ satisfying (1) - (4). In fact, Corollary \ref{disjoint spheres} guarantees that we can arrange for each sphere component of $\mc{S}$ to be disjoint from $\mc{H}$ and each disc component to be disjoint from $\mc{H}^-$ and intersect $\mc{H}^+$ exactly once.  In fact, Corollary \ref{disjoint spheres} guarantees that we can arrange for each sphere component of $\mc{S}$  to be \emph{disjoint} from $\mc{H}$. We have made the statement of Condition (3) weaker than this, so that we can preserve the property as we amalgamate. By the Monopod Lemma (Lemma \ref{monopod lemma}),  we may choose the desired $\Delta_C$ so as to satisfy (5).

Without loss of generality, assume that out of all such $\mc{H}$ we choose $\mc{H}$ to have the fewest components. Note that this means that no component of $M \setminus \mc{H}$ is a product compressionbody bounded by a component of $\mc{H}^+$ and a component of $\mc{H}^-$. There may be a product compressionbody incident to a component of $\boundary M$.  Let $P \subset \mc{H}^-$ be a minimal complete nonseparating collection of spheres. Let $f$ be a height function for $M\setminus P$.

Consider the sequence of amalgamations consistent with $f$ and spherical self-amalgamations that produce a Heegaard surface isotopic to $H$. If $\mc{H}\setminus P$ is connected, the sequence consists of only spherical self-amalgamations, otherwise it consists of some number of amalgamations followed by spherical self-amalgamations.

Begin by assuming that $\mc{H}\setminus P$ is disconnected. At each stage of the sequence, the generalized Heegaard surface satisfies (1) and (2). Consider the first step of this sequence and begin by supposing that it is an amalgamation of $J_+$ and $J_-$, each the union of thick surfaces, across $F$, the union of thin surfaces. Let $U_\pm$ be the (union of) compressionbodies on either side of $J_+$, with $U_+$ above $J_+$. Let $V_\pm$ be the (union of) compressionbodies on either side of $J_-$, with $V_+$ above $J_-$. Assume we have picked the notation so that $F = \boundary_- U_- \cap \boundary_- V_+$. If no component of  $\mc{S} \cap U_-$ or $\mc{S} \cap V_+$ separates a component of $F$ from $J_\pm$, then the resulting generalized Heegaard surface still satisfies (1) - (5), and we contradict our choice of $\mc{H}$. 

Consequently, some components of $\mc{S}$ separate $F$ from either $J_-$ or $J_+$. In this case, each such component of $\mc{S}$ is a sphere and the components of $F$ separated from $J_-$ and $J_+$ are also spheres. Suppose that $S_1, \hdots, S_k \subset V_+$ separate components of $F$ from $J_-$. The exterior of $S_1 \cup \cdots \cup S_k$ in $V_+$ is the disjoint union of compressionbodies, where one scar from each $S_i$ is a positive boundary component and one scar from each $S_i$ is a negative boundary component. Let $S'_1, \hdots, S'_{k'} \subset \mc{S} \cap U_-$ be those components separating components of $F$ from $J_+$. As before, their exterior is the union of compressionbodies such that each $S'_i$ produces one scar in positive boundary and one in a negative boundary. Thus when we amalgamate $J_-$ and $J_+$ across $F$, using the discs $\Delta_{U_-}$ and $\Delta_{V_+}$, the resulting thick surface $J$ intersects each $S_i$ and $S'_i$ exactly once in a curve parallel to a component of $\Delta_{U_-}$. If $J$ intersects each $S_i$ and $S'_i$ exactly once, we do not need to perform further modifications. Otherwise, we do the following. For simplicity of exposition assume that each $S_i$ and $S'_i$ intersects $J$ more than once. See Figure \ref{fig: multitube} for a depiction of the following isotopy.

Using the structure of the compressionbodies that are the complement of the union of the $S_i$ in $V_+$ and the $S'_i$ in  $U_-$, we may embed a collection of vertical discs $X$ in $(V_+|_{\Delta_{V_+}}) \cup (U_-|_{\Delta_{U_-}})$ so that each disc is a square with one edge on $J_- $ or $J_+$, one edge on $\mc{S}$ and its other two edges on distinct tubes arising from the vertical extensions of the scars of $\Delta_{U_+}$. Furthermore, these discs may be constructed so that each intersects each $S_i$ and $S'_i$ in at most 1 arc and so that the union of the arcs forms a tree with the components of $S_i \cap J$ and $S'_i \cap J$ as the vertices. These discs guide an isotopy of $J$ so that after the isotopy it intersects each $S_i$ and each $S'_i$ in a single simple closed curve. Since each $S_i$ and $S'_i$ is essential, these simple closed curves are necessarily essential in $J$. As they are obtained by banding together the scars from $\Delta_{U_-}$, an isotopy ensures they are disjoint from the discs $\Delta_{U_-} \cup \Delta_{U_+}$. Afterwards, we have constructed a generalized Heegaard surface satisfying (1) - (5) but with fewer components than $\mc{H}$, a contradiction. 

\begin{figure}[ht!]
\labellist
\small\hair 2pt
\pinlabel {$J_+$} [r] at 1 270
\pinlabel {$J$} [l] at 465 32
\pinlabel {$S'_1$} [l] at 425 213
\pinlabel {$S_1$} [l] at 425 92
\pinlabel {$F$} [l] at 406 153
\pinlabel {$X$} at 180 242
\pinlabel {$X$} at 313 240
\pinlabel {$X$} at 313 57
\endlabellist
\includegraphics[scale=0.5]{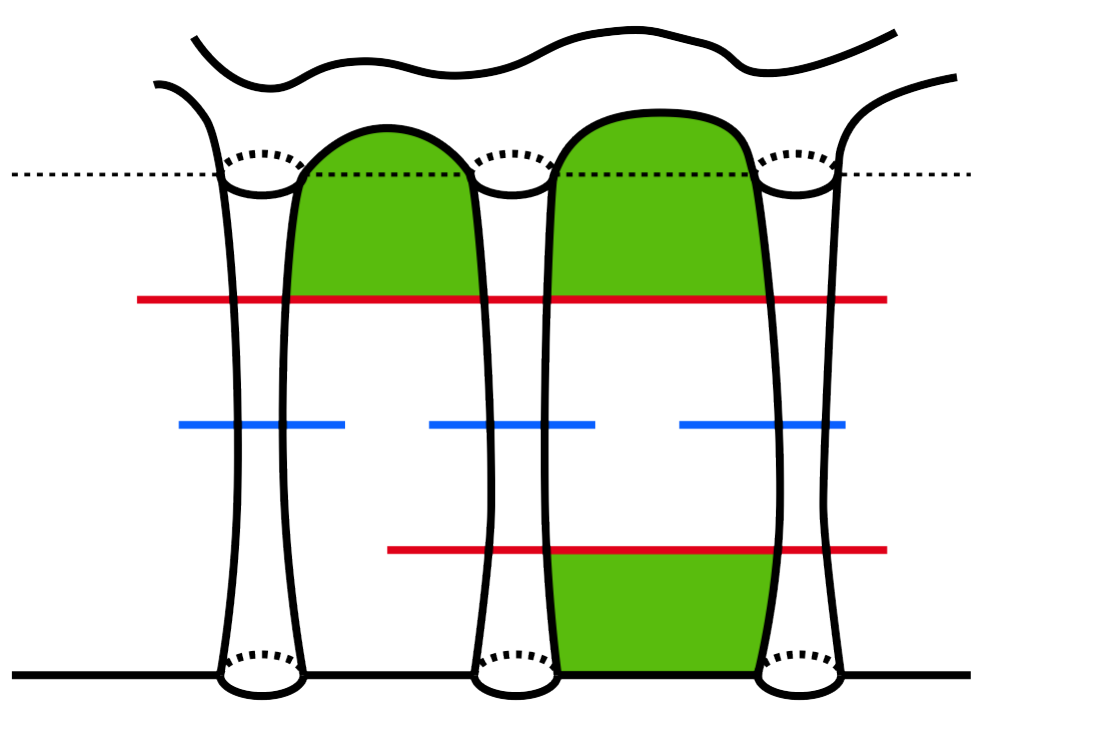}
\caption{The solid black lines show the result of amalgamating two thick surfaces (shown as horizontal dashed and solid lines) across three thin spheres (shown in blue). The horizontal red lines denote two spheres in $\mc{S}$ and the discs $X$ are shown in green. Isotoping the amalgamated thick surface across $X$ ensures it meets each red sphere in a single simple closed curve.}
\label{fig: multitube}
\end{figure}

Consequently, $\mc{H}$ is connected and we obtain $H$ (up to isotopy) by spherical self-amalgamations across a minimal set of nonseparating spheres $P$. When we perform the self-amalgamation, we first tube $\mc{H}$ to a parallel copy of $P$ and then tube $\mc{H}$ to itself via a tube that intersects each component of $P$ exactly once. Suppose that in $M \setminus P$, a component $S_0$ of $\mc{S}$ separated $\mc{H}$ from a scar of $P$. We may perform the initial tubing of $\mc{H}$ to $P$ so that the entirety of $S_0$ remains disjoint from $\mc{H}$ and then when we tube $\mc{H}$ to itself, each tube will intersect $S_0$ at most once. If some components of $\mc{S}$ intersect multiple tubes, an isotopy as in the previous paragraph ensures that we can arrange for $H$ to intersect each component of $\mc{S}$ in a single simple closed curve. \qed

\section*{Acknowledgements} I'm grateful to Alex Zupan for encouraging me to formulate and prove the Structure Theorem for Weakly Reducible Heegaard Splittings and to Marty Scharlemann for helpful comments and questions concerning early drafts of this paper.  Thanks also to the anonymous referees for helpful comments and careful reading. My perspective on (generalized) Heegaard splittings has been shaped by years of conversations with Maggy Tomova. I am thankful for research support from Colby College and National Science Foundation Grant DMS-2104022.

\section*{Statements and Declarations} I declare I have no conflict of interest.

\newpage

\begin{bibdiv}
\begin{biblist}

\bib{CG}{article}{
   author={Casson, A. J.},
   author={Gordon, C. McA.},
   title={Reducing Heegaard splittings},
   journal={Topology Appl.},
   volume={27},
   date={1987},
   number={3},
   pages={275--283},
   issn={0166-8641},
   review={\MR{0918537}},
   doi={10.1016/0166-8641(87)90092-7},
}

\bib{G3}{article}{
   author={Gabai, David},
   title={Foliations and the topology of $3$-manifolds. III},
   journal={J. Differential Geom.},
   volume={26},
   date={1987},
   number={3},
   pages={479--536},
   issn={0022-040X},
   review={\MR{0910018}},
}

\bib{Haken}{article}{
   author={Haken, Wolfgang},
   title={Some results on surfaces in $3$-manifolds},
   conference={
      title={Studies in Modern Topology},
   },
   book={
      series={Studies in Mathematics},
      volume={Vol. 5},
      publisher={Math. Assoc. America, },
   },
   date={1968},
   pages={39--98},
   review={\MR{0224071}},
}

\bib{HW}{article}{
   author={Hatcher, Allen},
   author={Wahl, Nathalie},
   title={Stabilization for mapping class groups of 3-manifolds},
   journal={Duke Math. J.},
   volume={155},
   date={2010},
   number={2},
   pages={205--269},
   issn={0012-7094},
   review={\MR{2736166}},
   doi={10.1215/00127094-2010-055},
}

\bib{HS}{article}{
   author={Hayashi, Chuichiro},
   author={Shimokawa, Koya},
   title={Thin position of a pair (3-manifold, 1-submanifold)},
   journal={Pacific J. Math.},
   volume={197},
   date={2001},
   number={2},
   pages={301--324},
   issn={0030-8730},
   review={\MR{1815259}},
   doi={10.2140/pjm.2001.197.301},
}

\bib{HeSc}{article}{
   author={Hensel, Sebastian},
      author={Schultens, Jennifer},
   title={Strong Haken via Sphere Complexes},
eprint={arXiv:2102.09831},
date = {2021}
}

\bib{Johannson}{article}{
   author={Johannson, Klaus},
   title={On surfaces and Heegaard surfaces},
   journal={Trans. Amer. Math. Soc.},
   volume={325},
   date={1991},
   number={2},
   pages={573--591},
   issn={0002-9947},
   review={\MR{1064268}},
   doi={10.2307/2001640},
}

\bib{Kobayashi}{article}{
   author={Kobayashi, Tsuyoshi},
   title={Heights of simple loops and pseudo-Anosov homeomorphisms},
   conference={
      title={Braids},
      address={Santa Cruz, CA},
      date={1986},
   },
   book={
      series={Contemp. Math.},
      volume={78},
      publisher={Amer. Math. Soc., Providence, RI},
   },
   isbn={0-8218-5088-1},
   date={1988},
   pages={327--338},
   review={\MR{0975087}},
   doi={10.1090/conm/078/975087},
}

\bib{Kobayashi2}{article}{
   author={Kobayashi, Tsuyoshi},
   title={Heegaard splittings of exteriors of two bridge knots},
   journal={Geom. Topol.},
   volume={5},
   date={2001},
   pages={609--650},
   issn={1465-3060},
   review={\MR{1857522}},
   doi={10.2140/gt.2001.5.609},
}

\bib{Lackenby}{article}{
   author={Lackenby, Marc},
   title={An algorithm to determine the Heegaard genus of simple 3-manifolds
   with nonempty boundary},
   journal={Algebr. Geom. Topol.},
   volume={8},
   date={2008},
   number={2},
   pages={911--934},
   issn={1472-2747},
   review={\MR{2443101}},
   doi={10.2140/agt.2008.8.911},
}

\bib{Moriah}{article}{
   author={Moriah, Yoav},
   title={On boundary primitive manifolds and a theorem of Casson-Gordon},
   journal={Topology Appl.},
   volume={125},
   date={2002},
   number={3},
   pages={571--579},
   issn={0166-8641},
   review={\MR{1935173}},
   doi={10.1016/S0166-8641(01)00303-0},
}

\bib{MS}{article}{
   author={Moriah, Yoav},
   author={Sedgwick, Eric},
   title={Closed essential surfaces and weakly reducible Heegaard splittings
   in manifolds with boundary},
   journal={J. Knot Theory Ramifications},
   volume={13},
   date={2004},
   number={6},
   pages={829--843},
   issn={0218-2165},
   review={\MR{2088748}},
   doi={10.1142/S0218216504003470},
}

\bib{RS}{article}{
   author={Rubinstein, Hyam},
   author={Scharlemann, Martin},
   title={Comparing Heegaard splittings of non-Haken $3$-manifolds},
   journal={Topology},
   volume={35},
   date={1996},
   number={4},
   pages={1005--1026},
   issn={0040-9383},
   review={\MR{1404921}},
   doi={10.1016/0040-9383(95)00055-0},
}

\bib{Scharlemann}{article}{
   author={Scharlemann, Martin},
   title={A Strong Haken's Theorem},
   journal={Algebraic \& Geometric Topology},
   status={to appear}
}

\bib{ST}{article}{
   author={Scharlemann, Martin},
   author={Thompson, Abigail},
   title={Thin position for $3$-manifolds},
   conference={
      title={Geometric topology},
      address={Haifa},
      date={1992},
   },
   book={
      series={Contemp. Math.},
      volume={164},
      publisher={Amer. Math. Soc., Providence, RI},
   },
   isbn={0-8218-5182-9},
   date={1994},
   pages={231--238},
   review={\MR{1282766}},
   doi={10.1090/conm/164/01596},
}

\bib{SSS}{book}{
   author={Scharlemann, Martin},
   author={Schultens, Jennifer},
   author={Saito, Toshio},
   title={Lecture notes on generalized Heegaard splittings},
   note={Three lectures on low-dimensional topology in Kyoto},
   publisher={World Scientific Publishing Co. Pte. Ltd., Hackensack, NJ},
   date={2016},
   pages={viii+130},
   isbn={978-981-3109-11-7},
   review={\MR{3585907}},
   doi={10.1142/10019},
}

\bib{ST-classification}{article}{
   author={Scharlemann, Martin},
   author={Thompson, Abigail},
   title={Heegaard splittings of $({\rm surface})\times I$ are standard},
   journal={Math. Ann.},
   volume={295},
   date={1993},
   number={3},
   pages={549--564},
   issn={0025-5831},
   review={\MR{1204837}},
   doi={10.1007/BF01444902},
}

\bib{ST-Wald}{article}{
   author={Scharlemann, Martin},
   author={Thompson, Abigail},
   title={Thin position and Heegaard splittings of the $3$-sphere},
   journal={J. Differential Geom.},
   volume={39},
   date={1994},
   number={2},
   pages={343--357},
   issn={0022-040X},
   review={\MR{1267894}},
}

\bib{Schultens}{article}{
   author={Schultens, Jennifer},
   title={The classification of Heegaard splittings for (compact orientable
   surface)$\,\times\, S^1$},
   journal={Proc. London Math. Soc. (3)},
   volume={67},
   date={1993},
   number={2},
   pages={425--448},
   issn={0024-6115},
   review={\MR{1226608}},
   doi={10.1112/plms/s3-67.2.425},
}

\bib{Taylor}{article}{
   author={Taylor, Scott A.},
   title={Equivariant Heegaard genus of reducible 3-manifolds},
   journal={Math. Proc. Cambridge Philos. Soc.},
   volume={175},
   date={2023},
   number={1},
   pages={51--87},
   issn={0305-0041},
   review={\MR{4600195}},
   doi={10.1017/S0305004123000038},
}

\bib{TT1}{article}{
   author={Taylor, Scott A.},
   author={Tomova, Maggy},
   title={Thin position for knots, links, and graphs in 3-manifolds},
   journal={Algebr. Geom. Topol.},
   volume={18},
   date={2018},
   number={3},
   pages={1361--1409},
   issn={1472-2747},
   review={\MR{3784008}},
   doi={10.2140/agt.2018.18.1361},
}

\bib{Waldhausen-3sph}{article}{
   author={Waldhausen, Friedhelm},
   title={Heegaard-Zerlegungen der $3$-Sph\"{a}re},
   language={German},
   journal={Topology},
   volume={7},
   date={1968},
   pages={195--203},
   issn={0040-9383},
   review={\MR{0227992}},
   doi={10.1016/0040-9383(68)90027-X},
}

\bib{Waldhausen-sufflarge}{article}{
   author={Waldhausen, Friedhelm},
   title={On irreducible $3$-manifolds which are sufficiently large},
   journal={Ann. of Math. (2)},
   volume={87},
   date={1968},
   pages={56--88},
   issn={0003-486X},
   review={\MR{0224099}},
   doi={10.2307/1970594},
}

\end{biblist}
\end{bibdiv}
\end{document}